\title{Homological stability properties of spaces of rational J-holomorphic curves in $\p$}
\author{
        Jeremy Miller \\
                Department of Mathematics\\
        Stanford University\\
        450 Serra Mall, Stanford, CA
}
\date{\today}
\newcounter{prob}
\newcommand{\Z}{\mathbb{Z}}
\newcommand{\C}{\mathbb{C}}
\newcommand{\M}{\mathcal{M}}
\newcommand{\p}{\mathbb{P}^2}
\newcommand{\s}{\mathbb{P}^1}
\newcommand{\R}{\mathbb{R}}
\newcommand{\m}{\longrightarrow}
\newcommand{\w}{\omega}
\newcommand{\n}{\bar d_\nu}
\newcommand{\ft}{Fr$\acute{e}$chet }
\newtheorem{theorem}{Theorem}[section]
\newtheorem{lemma}[theorem]{Lemma}
\newtheorem{corollary}[theorem]{Corollary}
\newtheorem{definition}{Definition}[section]
\newenvironment{proof}[1][Proof]{\begin{trivlist}
\item[\hskip \labelsep {\bfseries #1}]}{\end{trivlist}}
\newcommand{\qed}{\nobreak \ifvmode \relax \else
      \ifdim\lastskip<1.5em \hskip-\lastskip
      \hskip1.5em plus0em minus0.5em \fi \nobreak
      \vrule height0.75em width0.5em depth0.25em\fi}
\begin{document}
\maketitle

\begin{abstract}
In a well known work [Se], Graeme Segal proved that the space of holomorphic maps from a  Riemann surface to a complex projective space is homology equivalent to the corresponding continuous mapping space through a range of dimensions increasing with degree.  In this paper, we address if a similar result holds when other (not necessarily integrable) almost complex structures are put on projective space. We take almost complex structures that are compatible with the underlying symplectic structure. We obtain the following result: the inclusion of the space of based degree $k$ $J$-holomorphic maps from $\s$ to $\p$ into the double loop space of $\p$ is a homology surjection for dimensions $j\leq 3k-3$. The proof involves constructing a gluing map analytically in a way similar to McDuff and Salamon in [MS] and Sikorav in [S] and then comparing it to a combinatorial gluing map studied by Cohen, Cohen,  Mann, and Milgram in [CCMM].

\end{abstract}

\section{Introduction}

The space of holomorphic maps between two complex manifolds can be topologized as a subspace of the space of all continuous maps. A natural question is, what can be said about the relative homotopy type of the space of continuous maps relative to the subspace of holomorphic maps. Let us fix some notation.

\begin{definition}

For a complex manifold $M$, $A\in H_2(M)$, let $Hol_A(\s,M)$ denote the space of holomorphic maps $u:\s \m M$ with $u_* [\s ]=A$. Fix a point $m_0 \in M$. Let $Hol^*_A(\s,M)$ denote the subspace of  maps $u$ with $u(\infty)=m_0$. 

\end{definition}

\begin{definition}

Let $\Omega^2 M$ denote the space of based continuous maps from $\s$ to $M$. For $A\in H_2(M)$, let $\Omega^2_A M$ denote the subspace of $\Omega^2 M$ consisting of maps $u$ such that $u_* [\s ]=A$.

\end{definition}

Using the orientation coming from the complex structure, there is a natural isomorphism $H_2(\mathbb{P}^n) = \Z$. The integer corresponding to a homology class is called degree. In [Se], G. Segal prove the following theorem.

\begin{theorem}
The inclusion map $i:Hol_k^*(\s,\mathbb{P}^n) \m \Omega^2_k \mathbb{P}^n$ induces an isomorphism on homotopy groups $\pi_i$ for $i<k(2n-1)$ and a surjection for $i=k(n-1)$.
\end{theorem}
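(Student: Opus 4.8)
The plan is to replace the space of curves by an explicit coordinate model, and then to compare that model with the double loop space by a combination of a group completion theorem (to pin down the stable homotopy type) and a connectivity estimate (to extract the range $i<k(2n-1)$). After choosing the base point $u(\infty)=[1:0:\cdots:0]$, I would identify $Hol^*_k(\s,\mathbb{P}^n)$ with the space $R_k$ of $(n+1)$-tuples of complex polynomials $(p_0,p_1,\dots,p_n)$ in which $p_0$ is monic of degree exactly $k$, each $p_j$ with $j\geq 1$ has degree less than $k$, and the $p_j$ have no common root in $\C$; the map is $[p_0:\cdots:p_n]$, and the normalization forces the value $[1:0:\cdots:0]$ at $\infty$. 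Equivalently, a point of $R_k$ is a collection of $n+1$ effective divisors on $\C$ (the root multisets of the $p_j$) with empty common support. This ``tuples of divisors with no common point'' description is the combinatorial object studied in [CCMM], and it is the bridge between the analytic and the homotopy-theoretic sides.

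Next I would build the two structural maps. Juxtaposing root data (translating the divisors of one tuple far to the right of another and multiplying the corresponding polynomials) makes $\coprod_k R_k$ into a topological monoid, and multiplication of $p_0$ by a linear factor $(z-\lambda)$ gives a stabilization $s:Hol^*_k\m Hol^*_{k+1}$. The inclusions $i_k:Hol^*_k\m\Omega^2_k\mathbb{P}^n$ are compatible with $s$ up to homotopy. To identify the stable type of the target I would use the fibration $S^{2n+1}\m\mathbb{P}^n\m\mathbb{P}^\infty$: applying $\Omega^2$ and noting that $\Omega^2\mathbb{P}^\infty$ is discrete shows the basepoint component satisfies $\Omega^2_0\mathbb{P}^n\simeq\Omega^2 S^{2n+1}=\Omega^2\Sigma^2 S^{2n-1}$, while all degree components are homotopy equivalent. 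The group completion theorem (in the form due to McDuff and Segal) then says that the group completion of the monoid $\coprod_k R_k$ computes $H_*(\Omega^2 S^{2n+1})$, and that the scanning map realizing this completion agrees up to homology with $i_\infty:=\mathrm{colim}_k\, i_k$; hence $i_\infty$ is a homology equivalence onto a single component.

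It remains to convert the stable statement into the range $i<k(2n-1)$, and this connectivity estimate is the technical heart of the argument. By the approximation theorem for double loop spaces, $\Omega^2\Sigma^2 S^{2n-1}$ is modeled by the labeled configuration space $C(\C;S^{2n-1})$, filtered by the number of particles; the filtration quotient at level $k$ is the half smash $F(\C,k)_+\wedge_{\Sigma_k}(S^{2n-1})^{\wedge k}$, and since $S^{2n-1}$ is $(2n-2)$-connected this quotient is roughly $k(2n-1)$-connected. Consequently the truncation $C_{\leq k}(\C;S^{2n-1})\m\Omega^2 S^{2n+1}$ is a homology isomorphism through degree about $k(2n-1)$ and a surjection one degree higher, and equivalently $s$ is a homology isomorphism in that range. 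The main obstacle is then to show that the root stratification of $R_k$ matches this configuration filtration through the same range: I would stratify $R_k$ by the coincidence pattern of the divisors, compare the resulting spectral sequences (or the long exact sequences of the stratification) with those of $C_{\leq k}(\C;S^{2n-1})$, and verify that the strata where several of the $p_j$ nearly share a root contribute only in degrees above $k(2n-1)$ --- precisely the combinatorial comparison carried out in [CCMM]. Combining this with the limiting identification $i_\infty$ gives that $i_k$ is a homology isomorphism for $i<k(2n-1)$ and a surjection at the top degree. Finally, since $\Omega^2\mathbb{P}^n$ is a double loop space and hence a simple space, a comparison of universal covers together with the relative Hurewicz theorem upgrades this homology statement to the asserted statement on homotopy groups.
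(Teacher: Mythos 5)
First, a remark on scope: the paper does not prove this statement at all --- it is Segal's theorem, quoted from [Se] (and note the surjectivity range as printed, $i=k(n-1)$, is a typo for $i=k(2n-1)$). So your proposal can only be measured against Segal's published argument and its refinement in [CCMM], and in outline you have reproduced that route: an algebraic model by tuples of polynomials, a monoid/stabilization structure, identification of the stable limit with $\Omega^2 S^{2n+1}$ via group completion and scanning, a connectivity estimate coming from the May--Milgram filtration of $C(\C;S^{2n-1})$, and a Hurewicz upgrade. That is the correct skeleton, and the combinatorial comparison you defer to [CCMM] is indeed where the hard work is carried out.

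Three places in your write-up need repair. First, with the basepoint $[1:0:\cdots:0]$ your model has $p_j$ of degree $<k$ for $j\geq 1$; such a polynomial is not monic, may have any degree from $0$ to $k-1$ (or vanish identically), and so is not determined by an effective divisor. The ``$(n+1)$-tuples of degree-$k$ divisors on $\C$ with no common point'' model of [CCMM] requires the basepoint $[1:\cdots:1]$ and all $p_j$ monic of degree exactly $k$; you should switch to that normalization (the two are related by an automorphism of $\mathbb{P}^n$) before invoking their stratification and splitting. Second, the group completion theorem only identifies $H_*(\Omega B(\coprod_k R_k))$ with $\mathrm{colim}_k H_*(R_k)$; it does not identify $\Omega B(\coprod_k R_k)$ with $\Omega^2 S^{2n+1}$, nor the completion map with $i_\infty$. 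That identification is the content of Segal's scanning argument (comparing $\mathrm{colim}_k R_k$ with the labeled configuration space and invoking the approximation theorem), and it is the main geometric input rather than a formal consequence; as written you assert it. Relatedly, deducing the unstable range from the stable statement requires knowing that the stabilization $s$ on the \emph{holomorphic} side is a homology isomorphism in an increasing range, which is exactly the stratification comparison you postpone --- the connectivity of the filtration quotients of $C(\C;S^{2n-1})$ does not transport to $R_k$ for free. Third, simplicity of $\Omega^2\mathbb{P}^n$ alone does not promote a homology isomorphism to an isomorphism on homotopy groups; you also need the source $Hol_k^*(\s,\mathbb{P}^n)$ to be simple (or nilpotent). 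For $n\geq 2$ this holds because these spaces are simply connected, but for $n=1$ one must check that $\pi_1\cong\Z$ acts trivially on higher homotopy, a point Segal treats separately.
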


There have been many generalizations of this theorem including replacing $\mathbb{P}^n$ by some other projective variety. We address a different question. In a conversation with Ralph Cohen, Yakov Eliashberg posed the following question: Is Segal's theorem still true if the standard complex structure on $\mathbb{P}^n$ is replaced by some other almost complex structure $J$ on $\mathbb{P}^n$ compatible with the standard symplectic form?

The study of $J$-holomorphic curves for general almost complex structures $J$ has been important in symplectic geometry since Gromov's foundational paper [G]. Allowing non-integrable almost complex structures is important for many transverality arguments involving moduli spaces of $J$-holomorphic curves. These curves are also important because embedded $J$-holomorphic curves are symplectic surfaces and hence can be used to solve symplectic isotopy problems [S]. The topology of the space of all $J$-holomorphic curves can depend on the choice of almost complex structure. For example, in [A], Abreu considers the case of a symplectic form $\omega$ on $S^2 \times S^2$ where the two spheres have different areas. He shows that the space of $J$-holomorphic curves representing the homology class of the smaller sphere can be empty or non-empty depending on the choice of $\omega$-compatible almost complex structure. The purpose of our paper is to show that some aspects of the topology of the space of $J$-holomorphic curves in $\p$ is independent of the choice of almost complex structure. Specifically, we prove the following theorem.

\begin{theorem} 

If $J$ is an almost complex structure compatible with the standard symplectic form on $\p$, then $i:Hol_{k}^*(\s,(\p,J)) \m \Omega^2_k \p$ is a homology surjection for all dimensions $\leq 3k-3$.

\end{theorem}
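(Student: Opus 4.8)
The plan is to produce enough homology classes in $Hol_k^*(\s,(\p,J))$ to surject onto $H_j(\Omega^2_k\p)$ for $j\le 3k-3$, and then to identify those classes with ones already known to generate $H_*(\Omega^2_k\p)$ via the combinatorial model of [CCMM]. Since the components $\Omega^2_k\p$ are all homotopy equivalent to one another (translate by a fixed based loop of degree one), it suffices to understand a single component together with a system of classes that stabilizes. The overall shape of the argument is the one standard for theorems of Segal type: build a \emph{gluing} (stabilization) operation on the holomorphic side, check that it is compatible with the corresponding operation on the double loop space, and then transport a known generating set of homology classes back along the inclusion $i$.

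First I would construct the analytic gluing map. Because $J$ tames the standard symplectic form, Gromov's theory supplies a nonempty, well-behaved moduli space of low-degree $J$-holomorphic spheres to seed the construction; in particular the $J$-holomorphic lines in $\p$ form a good family, the relevant curves satisfying $c_1\cdot A>0$, so that automatic transversality applies in this $4$-dimensional setting even though $J$ is not generic. Given a based $J$-holomorphic map of degree $k$ together with a configuration of distinct points of $\C$ and germs of low-degree curves to be attached there, the McDuff--Salamon [MS] and Sikorav [S] gluing procedure produces, by a Newton iteration with uniform elliptic estimates, a genuine $J$-holomorphic map of higher degree. Carrying this out in families over a space of attaching data yields a map
\[
G:\; E_k \m Hol_{k+\ell}^*(\s,(\p,J)),
\]
where $E_k$ fibers over a configuration space recording the gluing parameters. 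The analytic content is making the estimates uniform enough that $G$ is defined and continuous over a parameter space whose homology is large enough to matter.

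Next I would compare $G$ with the combinatorial gluing map of [CCMM]. After including into $\Omega^2_{k+\ell}\p$, attaching a low-degree curve at a point of $\C$ should agree, up to homotopy through the relevant range, with the combinatorial operation that inserts a labelled particle into the configuration-space model of the double loop space. Establishing this compatibility lets me conclude that the composite $i\circ G$ realizes precisely the classes that [CCMM] prove generate $H_j(\Omega^2_k\p)$ for $j$ in the stated range, which gives the desired surjectivity. Tracking the numerics carefully---the modest loss from Segal's bound $i<3k$ down to $j\le 3k-3$ comes from the size of the gluing region and the dimension of parameter space required for a uniform family---yields surjectivity for all $j\le 3k-3$.

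The main obstacle I expect is this comparison step, not the existence of a gluing map. Producing \emph{some} analytic gluing is by now routine, but showing that the analytically glued family represents the \emph{same} homology class as the combinatorial construction of [CCMM] requires a genuine identification, up to homotopy through the range, of two a priori unrelated maps---one defined by solving a nonlinear PDE, the other purely combinatorially. The secondary difficulties are handling the non-genericity of the fixed $J$ (so transversality must come from positivity of intersections and $c_1$ rather than from perturbation) and arranging all gluing estimates to be uniform over the parameter space, so that $G$ is honestly continuous; both of these feed directly into how much of the Segal range survives.
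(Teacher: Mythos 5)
Your proposal follows essentially the same route as the paper: automatic transversality for degree-one $J$-holomorphic lines in the $4$-dimensional setting, a McDuff--Salamon/Sikorav analytic gluing map over a configuration-space parameter space, and a homotopy comparison with the combinatorial little-disks gluing of [CCMM] (via [BM]) to transport the known generators of $H_*(\Omega^2_k\p)$. One small correction: the drop from $3k$ to $3k-3$ is not an analytic loss from the gluing estimates but pure bookkeeping --- the glued family attaches $k$ degree-one curves to a fixed background line, so it lands in degree $k+1$ and is surjective there in dimensions $\leq 3k$, which reindexes to $\leq 3k-3$ in degree $k$.
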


This result is a generalization of Segal's theorem in [Se] in the sense that general almost complex structures are considered, but it is a weakening of Segal's theorem in the sense that homotopy equivalence is replaced by homology surjection. Segal's original proof is based on analyzing various configuration spaces of points labeled with integers. It can be shown that all holomorphic maps are algebraic by an elementary Liouville's theorem argument. Monic polynomials are determined by their roots. This allows one to study holomorphic mapping spaces by studing configuration spaces. However, for general almost complex structure $J$, $J$-holomorphic functions are no longer $n+1$-tuples of polynomials.

When the target is $\s$, the question of generalizing Segal's theorem is not interesting. All almost complex structures on real 2 dimensional manifolds are integrable and $\s$ admits a unique complex structure up to diffeomorphism. Thus Segal's original theorem applies to all almost complex structures on $\s$. While the question of generalizing Segal's theorem is interesting for all $\mathbb{P}^n$ with $n\geq 2$, the problem seems to be more approachable if $n=2$. This is because of the phenomenon of automatic transversality in dimension 4 discovered by Gromov in [G].

Using Gromov compactness and automatic tranversality, Gromov proved that the topology of the space of degree one rational $J$-holomorphic maps to $\p$ is independent of choice almost complex structure. We review this in section 2. We leverage this result about degree one curves to draw conclusions about higher degree mapping spaces using a gluing argument. If $J_0$ is the standard integrable complex structure, $Hol^*(\s,(\p,J_0))$ has a little 2-disks operad action given by juxtaposition of roots [BM]. This gives a gluing map $C_2(k) \times_{\Sigma_k} Hol^*_1(\s,(\p,J_0))^k \m Hol^*_k(\s,(\p,J_0))$ where $C_2(k)$ is the k'th space of the little 2-disks operad. In [CCMM], it is shown that this map is a surjection on homology. These results are reviewed in section 3. In section 4, we construct a similar map for arbitrary almost complex structures. In [MS], it is shown that the existence of a gluing map is equivalent to the surjectivity of a certain linearized $\bar d$ operator. We use automatic transversality to prove surjectivity of this linearized $\bar d$ operator using the ideas of Sikorav in [S]. In section 5, we show that the two gluing maps are homotopic which allows us to deduce Theorem 1.2.

The results of this paper are part of my doctoral thesis, written at Stanford University under the supervision of R. Cohen. I would
like to thank him as well as E. Ionel for their advice and support.

\section{Degree one maps and automatic transversality}

This section is devoted to reviewing Gromov's proof that diffeomorphism type of $Hol_1(\s,(\p,J))/PSL_2(\C)$ is independent of $J$. We will also prove that the evaluation map $Hol_1(\s,(\p,J)) \m \p$ is a smooth fiber bundle with fiber $Hol^*_1(\s,(\p,J))$. Before we can describe the proofs of these theorems, we need to review basic definitions and facts about moduli spaces of $J$-holomorphic curves and maps. A more complete discussion can be found in [MS]. We also review the theory of automatic transversality developed in [G], [HLS], and [S]. In this paper, $M$ will be a compact symplectic manifold with symplectic form $\omega$.

\begin{definition}

An almost complex structure $J$ is a section of $Hom_\R(TM,TM)$ such that $J^2=-id$.

\end{definition}

For a symplectic form $\omega$ and almost complex structure $J$, we can construct an associated bilinear form $g\in T^*M \otimes T^*M$ as follows. If $v$ and $w$ are tangent vectors based at the same point, let $g(v,w)=\omega(v,Jw)$. 

\begin{definition}

An almost complex structure $J$ is said to be compatible with $\omega$ if the associated bilinear form is a Riemannian  metric.

\end{definition}

Throughout the paper, all almost complex structures are required to be compatible with the symplectic form and of class $C^{\infty}$. The sphere $\s$ will always be considered with the almost complex structure induced from the standard complex structure. We will denote this almost complex structure by $j$.

\begin{definition}
Consider $C^{\infty}(\s,M)$ as an infinite dimensional Fr$\acute{e}$chet manifold topologized with the $C^{\infty}$ topology. Let $\Upsilon \m C^{\infty}(\s,M)$ be the infinite dimensional Fr$\acute{e}$chet bundle whose fiber over a map $u$ is $\Omega^{0,1}(u^*TM)$. Here $\Omega^{0,1}(u^*TM)$ denotes the space of smooth anti-complex linear one forms with values in the pullback of the tangent bundle of $M$. 

\end{definition}

\begin{definition}

Let $u: \s \m M$ be smooth. The non-linear $\bar d$ operator $\bar d^{nl}$ is defined by the formula $\bar d^{nl}(u)=\frac{1}{2}(Du+J \circ Du \circ j) \in \Omega^{0,1}(u^*TM)$. The map $u$ is said to be $J$-holomorphic if $\bar d^{nl}(u)=0$. The operator $\bar d^{nl}$ can be viewed as a section of $\Upsilon$.

\end{definition}

\begin{definition}
For $A\in H_2(M)$, let $Hol_A(\s,(M,J)$ denote the subspace of $C^{\infty}(\s,M)$ of maps $u$ with $\bar d^{nl} u=0$ and $u_* [\s ]=A$. Fix $m_0 \in M$. Let $Hol_A^*(\s,(M,J)$ denote the subspace of $Hol_A(\s,(M,J))$ consisting of maps $u$ with $u(\infty)=m_0$.

\end{definition}

The noncompact group $PSL_2(\C)=Hol_1(\s,\s)$ acts on $Hol_A(\s,(M,J))$ via precomposition. This action reparameterizes the map but keeps the image fixed. The action is not always free. The stabilizers of this action are finite provided $A \neq 0$. See [MS] for a discussion of this topic. 

\begin{definition}

A non-constant $J$-holomorphic map $u:\s \m M$ is called a multiple cover if $u=w \circ v$  with $v: \s \m \s$ a holomorphic branched cover of degree greater than $1$ and $w: \s \m M$ a $J$-holomorphic map.

\end{definition}

\begin{theorem}
Let $A\in H_2(M)$ be a non-zero homology class. If $u \in Hol_A(\s,(M,J))$ is fixed by $g\in PSL_2(\C)$ with $g \neq id$, then $u$ is a multiple cover. 
\end{theorem}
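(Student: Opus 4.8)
The plan is to combine the classification of nontrivial elements of $PSL_2(\C)$ with the identity theorem (unique continuation) and the removable singularity theorem for $J$-holomorphic curves. Write $G=\langle g\rangle$ for the cyclic subgroup generated by $g$; the hypothesis $u\circ g=u$ says exactly that $u$ is constant on every $G$-orbit in $\s$.

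First I would rule out the possibility that $g$ has infinite order. Every nontrivial element of $PSL_2(\C)$ is conjugate to $z\mapsto\lambda z$ (fixing $0$ and $\infty$) or to $z\mapsto z+1$ (fixing $\infty$). If $g$ has infinite order, then in each case there is a point $z_0\in\s$ whose orbit $\{g^n(z_0)\}_{n\in\Z}$ is infinite and possesses an accumulation point: the fixed points in the loxodromic and hyperbolic cases, the point $\infty$ in the parabolic case, and an entire circle in the elliptic case of irrational rotation. Since $u$ takes the single value $u(z_0)$ on this orbit, it agrees with the constant (hence $J$-holomorphic) map of value $u(z_0)$ on a set with an accumulation point. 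By the identity theorem for $J$-holomorphic curves (see [MS]), $u$ is then globally constant, forcing $u_*[\s]=0$ and contradicting $A\neq 0$. Therefore $g$ has finite order $m\geq 2$.

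Next I normalize. A finite-order element of $PSL_2(\C)$ is semisimple, so after conjugating by some $\phi\in PSL_2(\C)$ I may assume $g(z)=\zeta z$ with $\zeta=e^{2\pi i/m}$, whose fixed points are $0$ and $\infty$. This conjugation replaces $u$ by the reparametrization $u\circ\phi^{-1}$, which has the same image and homology class; and since being a multiple cover is preserved under precomposition by automorphisms of $\s$, it suffices to treat the normalized map. The quotient of $\s$ by $G$ is again $\s$, realized by the holomorphic branched covering $q:\s\m\s$, $q(z)=z^m$, of degree $m>1$ and branched exactly over $0$ and $\infty$. Because $u$ is $G$-invariant it descends to a continuous map $\bar u:\s\m M$ with $u=\bar u\circ q$.

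The main technical point is to upgrade $\bar u$ to a $J$-holomorphic map. Away from the two branch points $q$ is a local biholomorphism, so there $\bar u$ is locally the composite of $u$ with a local holomorphic inverse of $q$; since precomposition with a $j$-holomorphic map preserves the vanishing of $\bar d^{nl}$, the map $\bar u$ is smooth and $J$-holomorphic on $\s\setminus\{0,\infty\}$. At the two exceptional points $\bar u$ is continuous (being the descent of the continuous map $u$) with bounded image, so the removable singularity theorem for $J$-holomorphic curves ([MS]) shows that $\bar u$ extends to a smooth $J$-holomorphic map on all of $\s$. As $u$ is non-constant the descended map $\bar u$ is non-constant as well, and the factorization $u=\bar u\circ q$ with $q$ a holomorphic branched cover of degree $m>1$ exhibits $u$ as a multiple cover. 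I expect this removable-singularity and descent step to be the crux of the argument, with the Möbius classification and unique continuation being comparatively routine.
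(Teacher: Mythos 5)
Your argument is correct, but note that the paper does not actually prove this statement --- it simply cites [MS], where the standard argument runs through the factorization theorem: every nonconstant $J$-holomorphic sphere factors as $u=v\circ\phi$ with $v$ \emph{simple}, a simple curve is injective off a finite subset of $\s$, and a curve injective off a finite set cannot be fixed by any $g\neq id$ since $g$ has at most two fixed points. Your route is genuinely different and more constructive: instead of quoting the decomposition into a simple curve, you build the branched cover directly out of the symmetry, identifying $\s/\langle g\rangle\cong\s$ via $q(z)=z^m$ and descending $u$ to $\bar u$ with $u=\bar u\circ q$. What this buys is a proof that exhibits the multiple-cover structure explicitly and only uses local tools (isolatedness of point-preimages, removable singularities) rather than the global simple-curve decomposition; what it costs is the extra bookkeeping of classifying finite-order M\"obius transformations and the descent-plus-regularity step at the two branch points. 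Two small imprecisions are worth tightening. First, the ``identity theorem'' as usually stated for $J$-holomorphic curves requires infinite-order tangency (Aronszajn unique continuation), not merely agreement on a set with an accumulation point; what you actually need --- and what is true --- is that a nonconstant $J$-holomorphic curve has isolated preimages of each point, which follows from the local normal form $u(z)=u(z_0)+a(z-z_0)^k+o(|z-z_0|^k)$ in [MS] or [Mc]. Second, the hypothesis of the removable singularity theorem is finite energy rather than bounded image; this is immediate here since $E(\bar u)=E(u)/m=\omega(A)/m<\infty$, and the $J$-holomorphic extension it produces agrees with your continuous $\bar u$ at the punctures by continuity. With those two citations corrected, the proof is complete.
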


See [MS] for a proof. Note that the action $PSL_2(\C)$ restricts to an action on the subset of non-multiply covered maps. 

\begin{definition}

Let $\M_A(\s,(M,J))=\{u \in Hol_A(\s, M)$ such that u is not a multiple cover$\}/PSL_2(\C)$. For a finite subset  $Y \subset M$, let $\M_A(\s,(M,J),Y)$ denote the subspace of $\M_A(\s,(M,J))$ consisting of curves passing though $Y$.

\end{definition}

Elements in the above moduli space will be called unparameterized rational curves or rational curves for short. In [MS], it is shown that for generic choice of $J$, the above space is a smooth manifold. However, since we will only be interested in four dimensional manifolds, we can instead use more powerful automatic transversality arguments. The proof that these spaces are smooth manifolds involves identifying their tangent spaces. This involves defining linearized $\bar d$ operators.

The tangent bundle to the \ft  manifold $C^\infty({\s,M})$ is a bundle whose fiber at a map $u$ is $\Gamma(u^*TM)$. Here the symbol $\Gamma(V)$ denotes the vector space of smooth sections of a vector bundle $V$. Recall the definition of the bundle $\Upsilon$ from above. The operator $\bar d^{nl}$ defines a section of $\Upsilon$. Thus the derivative $D \bar d^{nl}$ is a bundle map $D \bar d ^{nl}:TC^{\infty}(\s,M) \m T\Upsilon$. Over a point in the zero section of $\Upsilon$, $(u,0)$, $T\Upsilon_{(u,0)}$ splits into a direct sum of $\Upsilon_u$ and $T_u C^{\infty}(\s,M)$. In [MS], McDuff and Salamon explain how to use a Hermitian connection to construct a continuous family of projections $\pi_u: T\Upsilon_{(u,0)} \m \Upsilon_u$. They note that this projection is independent of choice of Hermitian connection if $u$ is $J$-holomorphic. This construction allows us to define a linearized $\bar d$ operator.

\begin{definition}
For $u:\s \m M$ smooth, let $\bar d_u: \Gamma(u^*TM) \m \Omega^{0,1}(u^*TM)$ be given by the formula $\bar d_u = \pi_u \circ D \bar d ^{nl}_u$.

\end{definition}

From now on, assume $\dim_\R M =4$. Fix a $J$-holomorphic immersion, $u: \s \m M$. The pullback bundle $u^*TM$ is a complex plane bundle over $\s$. Since $u$ is an immersion, $u^*TM/T\s$ is a line bundle, which we denote $N$ for normal bundle. 

\begin{definition}
Let $\n : \Gamma(N) \m \Omega^{0,1}(N)$ be defined as follows. Lift a section $\xi^n \in \Gamma(N)$ to a section $\xi \in \Gamma(u^*TM)$. Let $\n \xi^n$ be the image of $\bar d_u \xi$ in $\Omega^{0,1}(N)$.

\end{definition}

See [S] for a proof that $\n$ is well defined. 

\begin{definition}
Let $\bar d_\tau: \Gamma(T\s) \m \Omega^{0,1}(T\s)$ be defined to be the linearized $\bar d$ operator associated to the identity function on $\s$ considered with respect to the standard almost complex structure in the domain and range.
\end{definition}

By section 4.1 of [S], there is a commuting diagram of exact sequences:

$$
\begin{array}{lcccccccl}
0  &\m &\Gamma(T\s) &\m  & \Gamma(u^* TM) & \m & \Gamma(N) & \m & 0                \\
\downarrow  & & \bar{d}_\tau \downarrow  &    & \bar{d}_u \downarrow &  & \n \downarrow &  & \downarrow              \\

0 & \m  &\Omega^{0,1}(T\s) &\m & \Omega^{0,1}(u^*TM) & \m &\Omega^{0,1}(N) &\m & 0
\end{array}$$

In order to prove that a moduli space or holomorphic mapping space is a manifold, one first identifies the relevant linearized $\bar d$ operator.  If that operator is surjective, one then uses an implicit function theorem argument to identify a neighborhood of the point in the moduli space or mapping space with a neigborhood of $0$ in the kernel of the $\bar d$ operator. In this way, we can view the tangent space to the moduli space or mapping space as the kernel of the $\bar d$ operator. When surjective, the three linearized $\bar d$ operators from above correspond to the following moduli spaces and mapping spaces. The kernel of $\bar{d}_\tau$ can be identified with the tangent space to the space of reparameterizations, $PSL_2(\C)$, at the identity. The kernel of $\bar{d}_u$, can be identified with the tangent space to $Hol_A(\s,(M,J))$ at $u$, and the kernel of $\n$ can be identified with the tangent space of $\M_A(\s,(M,J))$ at the equivalence class of $u$. It can be shown that $\bar{d} _\tau$ is always surjective and does not depend on $u$, $M$, or $J$. Thus, by the five lemma, surjectivity of $\bar{d}_u$ and $\n$ are equivalent conditions.

\begin{definition}

Let $E$ be a holomorphic line bundle. Let $\nabla^{0,1}: \Gamma(E) \m \Omega^{0,1}(E)$ be the anti-holomorphic part of $\nabla$, a Hermitian connection. We will call a first order differential operator $L: \Gamma(E) \m  \Omega^{0,1}(E)$ a generalized $\bar d$-operator if $L=\nabla^{0,1} + a$ with $a \in \Omega^{0,1}(End_{\R} E)$. 

\end{definition}

\begin{theorem}

If $E$ is a holomorphic line bundle on $\s$, $L: \Gamma(E) \m  \Omega^{0,1}(E)$ a generalized $\bar d$ operator, and $c_1(E) \geq -1$, then $L$ is surjective. Additionally assume that $c_1(E) \geq k-1$. For any collection of k points $x_1$, . . . $x_k$, let $V$ be the subspace of $\Gamma(E)$ of sections vanishing at each $x_i$. Then $L$ restricted to $V$ is surjective. 

\end{theorem}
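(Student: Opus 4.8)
The plan is to treat $L$ as a real-linear first-order elliptic operator on the complex line bundle $E$ and run the standard automatic-transversality argument: compute the Fredholm index via Riemann--Roch, reduce surjectivity to the vanishing of the kernel of a formal adjoint, and control that kernel with the Carleman similarity principle. First I would pass to suitable Sobolev completions ($W^{1,p} \to L^p$ with $p>2$) so that $L$ becomes Fredholm; since the zeroth-order term $a \in \Omega^{0,1}(\mathrm{End}_\R E)$ is a compact perturbation of $\nabla^{0,1}$, the real Fredholm index of $L$ equals that of $\nabla^{0,1}$, which by Riemann--Roch on $\s$ (genus $0$, complex rank $1$) is $\mathrm{ind}_\R L = 2c_1(E) + 2$; elliptic regularity ensures all kernel elements are smooth. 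The essential geometric input is the similarity principle: any solution of a generalized $\bar d$-operator is locally of the form $\xi(z) = \Phi(z)\sigma(z)$ with $\Phi$ pointwise real-linear invertible and $\sigma$ holomorphic, so a nonzero solution has only isolated zeros, each of strictly positive order, whose orders sum to $c_1(E)$.

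For the first assertion I would identify $\mathrm{coker}\, L$ with the kernel of the formal $L^2$-adjoint $L^*$. The adjoint is again, up to conjugation, a generalized $\bar d$-operator, acting on a line bundle $F$ whose degree is forced by the index identity $2c_1(F) + 2 = \mathrm{ind}_\R L^* = -\mathrm{ind}_\R L = -(2c_1(E)+2)$, i.e. $c_1(F) = -c_1(E) - 2$. When $c_1(E) \geq -1$ this gives $c_1(F) \leq -1 < 0$. Applying the similarity principle to $L^*$, any nonzero element of $\ker L^*$ would have a nonnegative number of zeros summing to $c_1(F) < 0$, which is impossible; hence $\ker L^* = 0$, so $\mathrm{coker}\, L = 0$ and $L$ is surjective.

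For the second assertion I would reduce the constrained surjectivity to a surjectivity statement for an evaluation map. Since $c_1(E) \geq k-1 \geq -1$, the first part gives that $L$ is surjective with $\dim_\R \ker L = 2c_1(E) + 2$. A direct linear-algebra argument shows $L|_V$ is surjective provided the evaluation map $\mathrm{ev}\colon \ker L \m \bigoplus_{i=1}^{k} E_{x_i} \cong \C^k$ is surjective: given $\beta$, choose any $\xi$ with $L\xi = \beta$ and subtract an element of $\ker L$ with the same values at the $x_i$. To compute the image of $\mathrm{ev}|_{\ker L}$ I would identify its kernel $\ker L \cap V = \ker(L|_V)$ with the kernel of the generalized $\bar d$-operator $L'$ on the twisted bundle $E' = E \otimes \mathcal{O}(-x_1 - \cdots - x_k)$: multiplication by a holomorphic section of $\mathcal{O}(x_1 + \cdots + x_k)$ with simple zeros at the $x_i$ conjugates $L'$ to $L$ and, via the similarity principle, carries $\ker L'$ isomorphically onto the solutions vanishing at the $x_i$. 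Since $c_1(E') = c_1(E) - k \geq -1$, the first part applies to $L'$, giving $\dim_\R \ker(L|_V) = \dim_\R \ker L' = 2(c_1(E) - k) + 2$. Rank--nullity then yields $\dim_\R \mathrm{image}(\mathrm{ev}|_{\ker L}) = (2c_1(E) + 2) - (2c_1(E) - 2k + 2) = 2k$, so $\mathrm{ev}|_{\ker L}$ is onto and $L|_V$ is surjective.

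The main obstacle, and the one genuinely analytic ingredient, is the similarity principle together with the resulting positivity of zeros; everything else is formal once it is in hand. I would cite [S], [HLS], and [MS] for the local representation of solutions, the Fredholm and index theory for real-linear Cauchy--Riemann operators, and the identification of the adjoint as a generalized $\bar d$-operator. The one point requiring care is the twisting step in the second part: I must check that dividing a solution of $L$ that vanishes at the $x_i$ by the holomorphic section remains a smooth solution of $L'$, which again follows from the local holomorphic normal form supplied by the similarity principle (and is why simple, distinct points $x_i$ give the cleanest case).
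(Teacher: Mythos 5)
Your proposal is correct, but it follows a genuinely different route from the one the paper cites. The paper (following [HLS]) sketches the argument as: show that any generalized $\bar d$-operator $L$ on $E$ is the Dolbeault operator $\nabla^{0,1}$ of \emph{some other} holomorphic structure on $E$, and then finish entirely within algebraic geometry --- Serre duality identifies the cokernel with $H^0(K\otimes E^*)^*$, which vanishes because $c_1(K\otimes E^*)=-2-c_1(E)<0$, and the pointwise constraints are handled by twisting the sheaf by $\mathcal O(-x_1-\cdots-x_k)$. You instead keep $L$ as a real-linear Cauchy--Riemann operator throughout: Riemann--Roch gives the index $2c_1(E)+2$, the formal adjoint is again a generalized $\bar d$-operator on a bundle of degree $-c_1(E)-2<0$, and the Carleman similarity principle (positivity of zeros) kills its kernel; the constrained statement then follows from your evaluation-map dimension count. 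This is essentially the [MS] Appendix C presentation rather than the [HLS] one. What the paper's route buys is that all the analysis is front-loaded into one global statement (existence of the new holomorphic structure), after which everything is formal sheaf theory; what your route buys is that you never need that global normal form, only the local one, at the cost of invoking the similarity principle twice (for $\ker L^*$ and for the twisting step) and of justifying that the adjoint is again a generalized $\bar d$-operator of the complementary degree. One small point to flag in your twisting step: conjugating $L$ by a holomorphic section $s$ of $\mathcal O(x_1+\cdots+x_k)$ sends the antilinear part of the zeroth-order term $a$ to $(\bar s/s)\,a$, which is bounded but need not be smooth at the $x_i$; this is harmless in your $W^{1,p}\to L^p$ setting (the similarity principle and Fredholm theory only need $L^\infty$ coefficients), but it means the twisted operator is a generalized $\bar d$-operator only in that slightly weakened sense, and you should say so.
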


The proof of the above theorem is due to Hofer, Lizan, and Sikorav in [HLS]. The proof involves showing that for any generalized $\bar d$ operator $L$, there is another holomorphic structure on $E$ such that $L=\nabla^{0,1}$ for some Hermitian connection. This reduces the problem to algebraic geometry and then the proof follows from Serre duality and positivity of intersection. They proved that $\n$ is a generalized $\bar d$ operator and deduce the following corollary.

\begin{corollary}

If $c_1(A) \geq 1$, then $\M_A(\s,(M,J))$ is a smooth manifold in a neighborhood of all points $[u]$ with $u$ an immersion. 

\end{corollary}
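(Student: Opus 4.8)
The plan is to reduce the statement to the surjectivity of the normal operator $\n$ and then to invoke Theorem 2.5. As explained in the discussion above, once $\n$ is surjective the implicit function theorem identifies a neighborhood of $[u]$ in $\M_A(\s,(M,J))$ with a neighborhood of $0$ in $\ker \n$, so that the moduli space is a smooth manifold there with tangent space $\ker \n$. Thus the only thing to check is that $\n$ satisfies the hypotheses of Theorem 2.5. Since $u$ is an immersion, $N = u^*TM/T\s$ is a genuine complex line bundle on $\s$, and $\n$ is a generalized $\bar d$ operator on $N$, so Theorem 2.5 applies as soon as we verify the Chern number bound $c_1(N) \geq -1$.

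The key step is therefore the computation of $c_1(N)$. I would use the short exact sequence of complex vector bundles over $\s$
$$0 \m T\s \m u^*TM \m N \m 0$$
together with the additivity of the first Chern class, which gives $c_1(u^*TM) = c_1(T\s) + c_1(N)$. Now $c_1(u^*TM) = \langle c_1(TM), u_*[\s]\rangle = c_1(A)$ by the definition of $c_1(A)$, while $c_1(T\s) = c_1(T\mathbb{P}^1) = 2$. Hence $c_1(N) = c_1(A) - 2$. Under the hypothesis $c_1(A) \geq 1$ this yields exactly $c_1(N) \geq -1$.

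With $c_1(N) \geq -1$ in hand, Theorem 2.5 shows that the generalized $\bar d$ operator $\n$ is surjective. Surjectivity of $\n$ then gives, through the implicit function argument recalled above, the smooth manifold structure on $\M_A(\s,(M,J))$ near $[u]$, completing the argument. (Equivalently, one could pass through $\bar d_u$: by the five lemma applied to the commuting diagram of exact sequences, surjectivity of $\n$ is equivalent to surjectivity of $\bar d_u$, since $\bar d_\tau$ is always surjective.)

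I expect no serious obstacle here: essentially all of the analytic content — the surjectivity criterion of Theorem 2.5, the fact that $\n$ is a generalized $\bar d$ operator, its well-definedness, and the passage from surjectivity of a linearized operator to a local manifold chart — is supplied by the results quoted above. The one point that genuinely requires the immersion hypothesis is that $N$ be a line bundle, so that the exact sequence and the Chern-class count are available; the computation itself is elementary, and I find it reassuring that the threshold $c_1(A) \geq 1$ lines up precisely with the threshold $c_1(N) \geq -1$ appearing in Theorem 2.5.
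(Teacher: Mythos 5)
Your argument is correct and is essentially the paper's own proof: both compute $c_1(N) = c_1(A) - c_1(T\s) = c_1(A) - 2 \geq -1$ from the normal bundle exact sequence and then apply the automatic transversality result for generalized $\bar d$ operators on line bundles (Theorem 2.2 in the paper's numbering, not 2.5) to conclude that $\n$ is surjective. The paper's version is simply terser, omitting the identification $c_1(u^*TM) = \langle c_1(TM), A\rangle$ and the implicit-function-theorem bookkeeping that you spell out.
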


\begin{proof}

Note that $c_1(A)=c_1 (T \s)+c_1(N) = 2+ c_1(N)$. Hence $c_1(N) \geq -1 $ so $\n$ is onto for all $u \in \M_A(\s,(M,J))$. 

\end{proof}

The above two theorems are what is referred to as ``automatic transversality.'' One can prove surjectivity of $\bar d$ operators without restricting to a ``generic'' subset of almost complex structures.

If we are instead interested in the moduli space of curves passing through a finite set $Y$, we use the following construction. Let $u: \s \m M$ be a $J$-holomorphic immersion. Let $X=\{x_1, . . . x_n \}$ be a finite subset of $\s$. We can also view $X$ as a divisor. For simplicity, assume $u|_X$ is injective. Let $Y=\{u(x_1), u(x_2), . . \}$. Consider the sheaf $\mathcal O(-X)$ of holomorphic sections of the trivial complex line bundle vanishing on $X$. The minus sign indicates that we require sections to vanish. A plus sign would indicate that we would allow rational sections with simple polls on $X$. The sheaf $\mathcal O(-X)$ is isomorphic to the sheaf of holomorphic sections of a holomorphic vector bundle. Tensoring with a vector bundle is an exact functor. Since $0 \m T\s \m u^*TM \m N \m 0$ is exact, $0 \m T\s \otimes \mathcal O(-X) \m u^*TM \otimes \mathcal O(-X) \m N \otimes \mathcal O(-X) \m 0$ is an exact sequence of sheaves. Just as ker $\n$ is a model for the tangent space of $\M_A(\s,(M,J))$, the subspace of ker $ \n$ of sections vanishing on $X$ is a model for the tangent space of $\M_A(\s,(M,J),Y)$. Also the subspace of ker $\bar d_u$ of sections vanishing on $X$ gives a model for the tangent space of the subspace of $Hol_A(\s,(M,J))$ consisting of maps $v$ with $v(x_i)=y_i$.

\begin{theorem}
Let $\mathcal{J}$ denote the space of smooth almost complex structures compatible with $\omega$.  The space $\mathcal{J}$ is contractible.

\end{theorem}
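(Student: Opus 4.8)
The plan is to realize $\mathcal{J}$ as a retract of the space $\mathcal{G}$ of all smooth Riemannian metrics on $M$. Since $\mathcal{G}$ is a convex subset of the vector space of smooth symmetric $2$-tensors (a convex combination of positive-definite forms is again positive definite), $\mathcal{G}$ is contractible. A retract of a contractible space is contractible: if $\iota\colon \mathcal{J}\m \mathcal{G}$ and $r\colon \mathcal{G}\m \mathcal{J}$ are continuous with $r\circ\iota=\mathrm{id}_{\mathcal{J}}$, then composing a contraction of $\mathcal{G}$ with $\iota$ on the right and $r$ on the left produces a contraction of $\mathcal{J}$. So it suffices to produce such an $\iota$ and $r$.

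The inclusion is the assignment $\iota(J)=g_J$, where $g_J(v,w)=\omega(v,Jw)$; by the definition of compatibility this is a genuine metric, so $\iota$ is well defined and continuous. For the retraction I would use pointwise polar decomposition, following McDuff and Salamon in [MS]. Given a metric $g$, nondegeneracy of $\omega$ produces a unique bundle automorphism $A$ of $TM$ with $\omega(v,w)=g(Av,w)$; skew-symmetry of $\omega$ makes $A$ skew-adjoint for $g$, so $A^\ast A=-A^2$ is $g$-symmetric and positive definite. It therefore has a unique positive-definite square root $P=\sqrt{-A^2}$, which commutes with $A$ and depends smoothly on $A$. Setting $r(g)=AP^{-1}$ gives $(AP^{-1})^2=A^2P^{-2}=-\mathrm{id}$, so $r(g)$ is an almost complex structure, and a short computation shows it is $\omega$-compatible with associated metric $g(P\,\cdot\,,\cdot)$. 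Because the construction is algebraic and fiberwise in $A$, and the square-root map is smooth on positive-definite endomorphisms, $r$ assembles into a continuous (indeed smooth) map $\mathcal{G}\m\mathcal{J}$ in the $C^\infty$ topology.

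Finally I would check $r\circ\iota=\mathrm{id}_{\mathcal{J}}$. If $g=g_J$, then the defining relation $\omega(v,w)=g_J(Av,w)=\omega(Av,Jw)$ is solved by $A=J$, using $\omega(Jv,Jw)=\omega(v,w)$; here $A^\ast A=-J^2=\mathrm{id}$, so $P=\mathrm{id}$ and $r(g_J)=J$. This identifies $\mathcal{J}$ with a retract of the contractible space $\mathcal{G}$ and completes the argument.

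I expect the only genuine obstacle to be the infinite-dimensional bookkeeping: verifying that the fiberwise polar decomposition assembles into a map of section spaces that is continuous for the $C^\infty$ (Fr$\acute{e}$chet) topology. This reduces to smoothness of $A\mapsto\sqrt{-A^2}$ on the open set of invertible $g$-skew-adjoint endomorphisms, together with the smooth dependence of $A$ on the metric $g$; everything else is pointwise linear algebra.
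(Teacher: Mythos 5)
Your argument is correct, and it is precisely the standard polar-decomposition proof from [MS] (Proposition 2.50 ff.): the paper itself gives no proof of this theorem, simply citing [G] and [MS], and your writeup reconstructs exactly the argument of that reference, including the key points ($A$ skew-adjoint, $P=\sqrt{-A^2}$ commuting with $A$, and $P=\mathrm{id}$ when $g=g_J$ so that $r\circ\iota=\mathrm{id}$). No gaps; the only caveat you flag (continuity of the fiberwise square root in the $C^\infty$ topology) is handled as you say, since $A\mapsto\sqrt{-A^2}$ is real-analytic on positive-definite endomorphisms.
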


See [G] or [MS] for a proof. Since $\mathcal{J}$ is path connected, we can take a path $J_t$ connecting any two almost complex structures $J_0$ and $J_1$. Let $\mathfrak{M}=\bigcup_t \M_A(\s,(M,J_t),Y)$. There is a natural map from $\pi: \mathfrak{M} \m [0,1]$ given by $\pi((u,t))=t$. 

\begin{theorem}

If $dim(M)=4$, $|Y|< (c_1(TM),A)$, and every curve is an immersion, then  $\mathfrak{M}$ is a manifold with boundary $\M_A(\s,(M,J_0),Y) \cup \M_A(\s,(M,J_1),Y)$ and $\pi : \mathfrak{M} \m [0,1]$ is a submersion.

\end{theorem}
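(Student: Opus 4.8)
The plan is to exhibit $\mathfrak{M}$ as the zero locus of a single parametrized section and to apply the implicit function theorem, in the same way that the individual moduli spaces are identified above with kernels of $\bar d$ operators. Over $C^{\infty}(\s,M)\times[0,1]$ --- after Sobolev completion and after passing to a local slice for the $PSL_2(\C)$ action, which is free with finite stabilizers on non-multiply-covered curves by Theorem 2.1 --- consider the section
$$
\mathcal{F}(u,t)=\tfrac{1}{2}\left(Du+J_t\circ Du\circ j\right),
$$
whose value lies in the fiber $\Omega^{0,1}(u^*TM)$ taken with respect to $J_t$. The subset $\{\mathcal{F}=0\}$, cut down by the condition that the curve pass through $Y$ and quotiented as above, is exactly $\mathfrak{M}$. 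To give this subset the structure of a manifold with boundary it suffices to prove that the linearization of $\mathcal{F}$, restricted to the incidence constraint, is surjective with split kernel at every point of $\mathfrak{M}$, endpoints included.

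First I would linearize. At a zero $(u,t)$, the derivative in a direction $(\xi,s)\in\Gamma(u^*TM)\oplus\R$ is
$$
L(\xi,s)=\bar d_u\,\xi+\tfrac{s}{2}\,\dot J_t\circ Du\circ j,
$$
where $\dot J_t=\tfrac{d}{dt}J_t$: the first summand is the fiberwise operator $\bar d_u$ of Definition 2.8 and the second records the variation of the almost complex structure. Since every curve is an immersion, the commuting diagram of exact sequences above lets me descend $L$ to the normal bundle $N=u^*TM/T\s$, producing a parametrized normal operator $(\xi^n,s)\mapsto \n\,\xi^n+\tfrac{s}{2}\,[\dot J_t\circ Du\circ j]$, where $[\;\cdot\;]$ denotes the image in $\Omega^{0,1}(N)$. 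Restricting the domain to sections of $N$ vanishing on $X=u^{-1}(Y)$ encodes the constraint that the curve meet $Y$, exactly as in the description of $\M_A(\s,(M,J),Y)$ above.

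The crucial observation is that surjectivity is already supplied by the fiberwise term alone: setting $s=0$, it is enough that $\n$, restricted to sections of $N$ vanishing on $X$, be onto for every $t\in[0,1]$. This is precisely Theorem 2.2 applied to $E=N$ with the $|Y|$ points $X$, whose hypothesis is $c_1(N)\geq |Y|-1$. Because $c_1(A)=(c_1(TM),A)=c_1(T\s)+c_1(N)=2+c_1(N)$, this is equivalent to $|Y|\leq (c_1(TM),A)-1$, i.e. to $|Y|<(c_1(TM),A)$ --- exactly the numerical hypothesis. The estimate uses only this bound on $c_1(N)$ together with the immersion hypothesis, both of which hold along the entire path, so the conclusion is uniform in $t$; in particular $\n$ is surjective at the endpoints $t=0,1$ as well, and therefore the parametrized normal operator is surjective throughout $\mathfrak{M}$.

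With the parametrized operator surjective and Fredholm (as in [MS]), the implicit function theorem endows $\mathfrak{M}$ with the structure of a smooth manifold with boundary, the boundary being the fibers over $t\in\{0,1\}$, namely $\M_A(\s,(M,J_0),Y)\cup\M_A(\s,(M,J_1),Y)$. To see that $\pi$ is a submersion, note that on the kernel of the parametrized operator the differential $d\pi$ is the projection $(\xi^n,s)\mapsto s$; surjectivity of $\n$ lets me solve $\n\,\xi^n=-\tfrac{1}{2}[\dot J_t\circ Du\circ j]$ for the choice $s=1$, yielding a tangent vector with nonzero $t$-component. I expect the real work to be analytic bookkeeping rather than new ideas: choosing the Banach completions and the local $PSL_2(\C)$-slice so that the implicit function theorem genuinely applies (following [MS]), and confirming that the endpoint fibers sit as a boundary rather than as interior strata. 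The single conceptual input, that automatic transversality holds simultaneously for every $J_t$, reduces to the inequality $|Y|<(c_1(TM),A)$ verified above.
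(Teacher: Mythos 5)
The paper does not actually prove this theorem: it is stated with only a pointer to [G] and [S] ("This was first proven by Gromov in [G]. See [S] for another exposition of the proof"), so there is no internal argument to compare against. Your sketch is the standard parametrized-transversality proof and it is consistent with the machinery the paper does set up: you realize $\mathfrak{M}$ as the zero set of the parametrized section $\mathcal{F}(u,t)$, linearize, descend to the normal bundle via the commuting diagram of exact sequences (legitimate because every curve is assumed to be an immersion), impose the incidence constraint by twisting by $\mathcal{O}(-X)$ exactly as in the paper's discussion of $\M_A(\s,(M,J),Y)$, and then invoke Theorem 2.2 for $E=N$ with $|Y|$ marked points. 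Your arithmetic $c_1(N)=(c_1(TM),A)-2\geq |Y|-1 \iff |Y|<(c_1(TM),A)$ is the correct translation of the numerical hypothesis, and your observation that the fiberwise operator $\n$ is already onto for every $t$ (so that no perturbation of $J$ is needed and $d\pi$ surjects onto $T_t[0,1]$) is precisely the content of automatic transversality that makes the theorem hold for \emph{all} compatible $J$ rather than generic ones. The remaining issues are the ones you flag yourself --- Sobolev completions, a local slice for the free $PSL_2(\C)$ action on non-multiply-covered curves, identifying the fibers $\Omega^{0,1}$ as $J_t$ varies, and the convention for $X$ when $u^{-1}(Y)$ is not a single point per element of $Y$ (the paper assumes $u|_X$ injective) --- all standard bookkeeping rather than gaps in the idea. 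I see no error.
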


This was first proven by Gromov in [G].  See [S] for another exposition of the proof. In addition to a criterion for $\pi$ being a submersion, there is a criterion for $\pi$ being proper.

\begin{theorem}
Let $A \in H_2(M)$ be a homology class with the following property. Assume for all non-zero homology classes $B$ and $C$ with
$B+C=A$, either $\M_B(\s,(M,J),Y)=\phi$ or $\M_C(\s,(M,J),Y)=\phi$. Under these assumptions, $\pi : \mathfrak{M}  \m \mathcal{J}$ is proper. 

\end{theorem}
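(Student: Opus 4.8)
The plan is to deduce properness from sequential compactness: since $\mathfrak{M}$ and $\mathcal{J}$ are metrizable, it suffices to show that for every compact $K \subseteq \mathcal{J}$ the preimage $\pi^{-1}(K)$ is sequentially compact. So I would begin with an arbitrary sequence of unparameterized curves $[u_n] \in \pi^{-1}(K)$, where each $u_n$ is a simple $J_n$-holomorphic map representing $A$ and passing through $Y$, with $J_n \in K$. Compactness of $K$ lets me pass to a subsequence with $J_n \m J_\infty \in K$. The essential uniform bound is automatic: the energy of each $u_n$ equals the symplectic area $\w(A)$, which depends only on the homology class, so the entire sequence has fixed energy.

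With a uniform energy bound and $J_n \m J_\infty$, the next step is to invoke Gromov's compactness theorem in the stable-map formulation of [MS]. After passing to a further subsequence, the $u_n$ converge in the Gromov topology to a stable $J_\infty$-holomorphic map $u_\infty$ whose domain is a tree of spheres. Its non-constant components represent nonzero classes $A_1, \dots, A_r$ with $\sum_i A_i = A$, each $A_i$ is realized by a $J_\infty$-holomorphic sphere, and the constraint set $Y$ is contained in the image of $u_\infty$.

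The heart of the argument is to show, using the standing hypothesis on $A$, that $u_\infty$ is in fact a single simple curve. If $r \geq 2$, then grouping the non-constant components into two nonempty families produces nonzero classes $B$ and $C$ with $B + C = A$ for which both $\M_B(\s,(M,J_\infty))$ and $\M_C(\s,(M,J_\infty))$ are nonempty (each contains the simple curve underlying one of the grouped components), contradicting the hypothesis. This forces $r = 1$, so the whole energy is carried by a single component, no energy is lost to bubbling, and the Gromov convergence reduces to honest $C^\infty$ convergence up to reparameterization. Consequently $u_\infty$ is a single $J_\infty$-holomorphic sphere representing $A$ and passing through $Y$, so $[u_\infty] \in \M_A(\s,(M,J_\infty),Y) \subseteq \pi^{-1}(K)$, exhibiting the desired convergent subsequence with limit in $\pi^{-1}(K)$.

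The main obstacle I anticipate is the careful application of Gromov compactness together with the bookkeeping of the constraints and the multiply covered locus. One must verify that the limiting stable map genuinely carries all of $Y$ on its surviving component, and—most delicately—rule out the possibility that the single limiting component is a $d$-fold cover of a simple curve in a smaller class $B_0$ with $A = dB_0$; this degenerate case does not split cleanly through the hypothesis as stated and must be excluded by the arithmetic of the admissible classes (for instance, when $A$ is not a nontrivial multiple of a represented class, as in the primitive and degree-one situations where this criterion is ultimately applied).
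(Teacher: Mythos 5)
The paper offers no proof of this statement at all --- it simply declares it ``a corollary to Gromov's compactness theorem first appearing in [G]'' and points to [S] --- so your outline is precisely the argument the author is leaving to the reader, and in the setting where the theorem is actually used ($M=\p$, $A$ the degree one class, $Y=\{y_0\}$) it goes through. Your energy bound, the passage to a Gromov limit, and the reduction to showing the limit does not degenerate are all the right skeleton, and you are right to flag the multiple-cover limit as a case the stated hypothesis does not cleanly exclude.

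There is, however, one genuine slip in your contradiction step as written. The hypothesis concerns the \emph{constrained} moduli spaces $\M_B(\s,(M,J),Y)$ and $\M_C(\s,(M,J),Y)$, but your grouping argument only produces nonempty \emph{unconstrained} spaces $\M_B(\s,(M,J_\infty))$ and $\M_C(\s,(M,J_\infty))$; since $\M_B(\s,(M,J),Y)\subseteq\M_B(\s,(M,J))$, nonemptiness of the larger space does not contradict emptiness of the smaller one. The points of $Y$ in the limit are distributed among the bubble components, so neither grouped piece need pass through all of $Y$. A second, related imprecision: if the stable limit has $r\geq 3$ components in classes $A_1,\dots,A_r$, setting $B=A_1$ and $C=A_2+\cdots+A_r$ gives a single sphere representing $B$ but not necessarily a single (simple) sphere representing $C$, so again the hypothesis as literally stated is not contradicted. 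Both gaps are harmless in the paper's application, where the decomposition $1=b+c$ forces $\w(b)<0$ or $\w(c)<0$ and hence emptiness of the full unconstrained moduli space by Theorem 2.7 (and degree one classes admit no multiple covers); but to prove the theorem as stated you should either strengthen the hypothesis to the unconstrained, multi-component form (no decomposition $A=\sum A_i$ with every $A_i$ represented by a nonconstant $J$-holomorphic sphere) or explain how the $Y$-constraint is inherited by the limit components.
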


This is a corollary to Gromov's compactness theorem first appearing in [G]. Gromov in [G] also observed the following sufficient condition for a moduli space of $J$ holomorphic curves to be empty.

\begin{theorem}
If $\w(A)<0$, then $\M_A(\s,(M,J))=\phi$.
\end{theorem}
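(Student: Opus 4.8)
The plan is to establish the contrapositive in a pointwise fashion: rather than argue about the quotient $\M_A$ directly, I would show that \emph{every} $J$-holomorphic map $u:\s \m M$ with $u_*[\s]=A$ satisfies $\w(A) \geq 0$. Since $\M_A(\s,(M,J))$ is by definition a quotient of a subset of $Hol_A(\s,(M,J))$, emptiness of the latter whenever $\w(A)<0$ immediately forces $\M_A(\s,(M,J))=\phi$; in particular there is no need to worry about multiple covers or the $PSL_2(\C)$-action.

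The heart of the argument is the observation that compatibility of $J$ with $\w$ makes the integrand $u^*\w$ pointwise nonnegative. First I would note that $\w(A) = \langle [\w], u_*[\s]\rangle = \int_\s u^*\w$. To pin down the sign, I would work in a local conformal coordinate $z=s+it$ on $\s$, so that $j\,\partial_s = \partial_t$. The equation $\bar d^{nl}(u)=\frac{1}{2}(Du + J\circ Du\circ j)=0$ then reads $\partial_t u = J\,\partial_s u$. Substituting into the pullback form and using the metric $g(v,w)=\w(v,Jw)$ associated to the compatible pair $(\w,J)$ gives
$$u^*\w(\partial_s,\partial_t) = \w(\partial_s u,\partial_t u) = \w(\partial_s u,\, J\,\partial_s u) = g(\partial_s u,\partial_s u) = |\partial_s u|^2 \geq 0.$$

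Integrating this inequality over $\s$ yields $\w(A)=\int_\s u^*\w \geq 0$, contradicting the hypothesis $\w(A)<0$; hence no such $u$ exists and $\M_A(\s,(M,J))=\phi$. I do not anticipate any real obstacle here: unlike the transversality statements reviewed above, there is no analysis to perform and no genericity to invoke. The only point requiring a line of care is that the local quantity $|\partial_s u|^2\,ds\wedge dt$ is independent of the choice of conformal chart and so patches to the globally defined two-form $u^*\w$; this follows because both the metric norm and the $J$-holomorphic equation are conformally natural. I would also remark that equality holds precisely where $Du=0$, so in fact $\w(A)>0$ for any non-constant curve, although only the weak inequality is needed for the statement.
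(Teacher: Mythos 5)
Your argument is correct and is the standard one: compatibility makes $u^*\w = |\partial_s u|^2\, ds\wedge dt$ pointwise nonnegative for a $J$-holomorphic $u$, so $\w(A)=\int_\s u^*\w\geq 0$, and emptiness of $Hol_A$ forces emptiness of its quotient $\M_A$. The paper itself offers no proof of this statement, simply attributing the observation to Gromov in [G], and your energy-positivity computation is exactly what that observation amounts to. The only gloss is in your closing aside: deducing the strict inequality $\w(A)>0$ for non-constant curves needs the additional fact that $Du$ cannot vanish on an open set (unique continuation), but since only the weak inequality is used, this does not affect the proof.
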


Now let $M=\p$ and use the standard  Fubini-Study symplectic form. Denote the homology class given by $n$ times the fundamental class of the standard $\s \subset \p$ by $n$.

\begin{theorem}
If $u\in Hol_1(\s,(\p,J))$, then $u$ is an embedding.

\end{theorem}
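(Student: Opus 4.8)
The plan is to exploit the adjunction formula for somewhere injective $J$-holomorphic curves in a symplectic $4$-manifold, which measures the failure of $u$ to be an embedding by a nonnegative integer $\delta(u)$, the algebraic count of double points and non-immersed points. The strategy is essentially numerical: for the line class the relevant characteristic numbers are rigid enough to force $\delta(u)=0$, and this is exactly the condition that $u$ be an embedding.

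First I would verify that $u$ is somewhere injective, so that the adjunction formula applies. A $J$-holomorphic map that is multiply covered factors through a branched cover of degree at least $2$, and since degree is multiplicative such a map has degree at least $2$. Hence a degree one map is never a multiple cover, and by the standard structure theory for simple curves (see [MS]) it is somewhere injective. In particular $u$ is non-constant, since the class $1$ is nonzero.

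Next I would compute the three quantities entering the adjunction formula for $A=1$ in $M=\p$. The domain is $\s$, so the genus is $g=0$. Using $c_1(T\p)=3H$, where $H$ is the hyperplane (line) class, one has $c_1(A)=\langle c_1(T\p), A\rangle = 3$. Finally the self-intersection of the line class is $A\cdot A=1$. The adjunction formula for a somewhere injective curve reads
$$ 2\delta(u) = A\cdot A - c_1(A) + 2 - 2g, $$
so substituting gives $2\delta(u)=1-3+2-0=0$. Since $\delta(u)\ge 0$, with equality if and only if $u$ is an embedding, I conclude that $u$ is an embedding.

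The main obstacle is not the arithmetic but the input that makes the adjunction formula available: the positivity of intersections for $J$-holomorphic curves in dimension four (Gromov [G], McDuff), which guarantees that each double point and each non-immersed point contributes positively to $\delta(u)$, so that $\delta(u)\ge 0$ and the formula genuinely detects embeddedness. A secondary subtlety is that a priori $u$ need not be an immersion; the generalized adjunction formula of [MS] packages the immersed and embedded conditions together, so that $\delta(u)=0$ simultaneously rules out double points and critical points, yielding an honest embedding rather than merely an immersion.
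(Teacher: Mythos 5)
Your argument is correct and follows essentially the same route as the paper: both rest on McDuff's generalized adjunction formula for somewhere injective $J$-holomorphic curves in symplectic $4$-manifolds, with the paper phrasing the computation via the degree--genus formula $g=(d-1)(d-2)/2$ and you phrasing it via the vanishing of the singularity count $\delta(u)$ --- these are the same calculation. Your write-up is in fact somewhat more complete than the paper's sketch, since you explicitly check that degree one curves are somewhere injective and carry out the arithmetic $2\delta(u)=A\cdot A-c_1(A)+2-2g=0$.
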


The proof involves first showing that the adjuction formula from algebraic geometry applies to $J$ holomorphic curves in symplectic 4 manifolds [Mc]. This that implies the degree genus formula, $g=(d-1)(d-2)/2$, holds for embedded non-multiply covered $J$-holomorphic curves in $\p$. Since no degree one maps are multiple covers, this implies that genus zero maps of degree 1 are embeddings. In particular, the above theorem implies that all degree $1$ maps are immersion.

Let $J_0$ denote the almost complex structure associated to the standard complex structure on $\p$. From now on, take $Y=\{y_0\}$ to be a set with one element.

\begin{theorem}
For any compatible almost complex structure $J$, $\M_1(\s,(\p,J),\{y_0\})$ is diffeomorphic to $\M_1(\s,(\p,J_0),\{y_0\})$.
\end{theorem}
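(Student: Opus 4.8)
The plan is to realize the two moduli spaces as the two ends of a single cobordism and then apply Ehresmann's fibration theorem. First I would invoke Theorem 2.9 to choose a smooth path $J_t$, $t\in[0,1]$, in $\mathcal{J}$ with $J_0$ the standard structure and $J_1=J$, and form the parametrized moduli space $\mathfrak{M}=\bigcup_t \M_1(\s,(\p,J_t),\{y_0\})$ together with the projection $\pi:\mathfrak{M}\m[0,1]$, $\pi((u,t))=t$. By construction $\pi^{-1}(0)=\M_1(\s,(\p,J_0),\{y_0\})$ and $\pi^{-1}(1)=\M_1(\s,(\p,J),\{y_0\})$, so it suffices to show these two fibers are diffeomorphic.

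Next I would verify the hypotheses of the submersion theorem (Theorem 2.10) for $A=1$ and $Y=\{y_0\}$. Since $\p$ is real four-dimensional and $c_1(T\p)$ pairs with the degree-one class to give $3$, the numerical condition $|Y|=1<3=(c_1(T\p),A)$ holds. The immersion hypothesis is supplied by the degree-one embedding theorem: every $u\in Hol_1(\s,(\p,J_t))$ is an embedding, hence an immersion, and this holds for every $J_t$ along the path. Theorem 2.10 then shows $\mathfrak{M}$ is a manifold with boundary $\pi^{-1}(0)\cup\pi^{-1}(1)$ on which $\pi$ is a submersion.

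To upgrade this to a fiber bundle I need $\pi$ proper, which is where the properness criterion enters. Its hypothesis asks that for every splitting $1=B+C$ into non-zero classes, one of $\M_B$, $\M_C$ be empty. Two non-zero integers summing to $1$ cannot both be positive, so one of them, say $B$, is negative; then $\w(B)<0$ and the emptiness theorem gives $\M_B(\s,(\p,J_t))=\phi$, hence also $\M_B(\s,(\p,J_t),\{y_0\})=\phi$. Thus $\pi:\mathfrak{M}\m[0,1]$ is proper, and since $[0,1]$ is compact, $\mathfrak{M}$ is a compact manifold with boundary.

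Finally, a proper submersion of a compact manifold with boundary over $[0,1]$ carrying $\partial\mathfrak{M}$ to $\{0,1\}$ is a trivial fiber bundle by Ehresmann's theorem, so $\mathfrak{M}\cong\pi^{-1}(0)\times[0,1]$ and in particular $\pi^{-1}(0)\cong\pi^{-1}(1)$, which is the desired diffeomorphism. I expect the main obstacle to be bookkeeping rather than analysis: the genuinely substantive inputs, namely automatic transversality forcing $\pi$ to be a submersion and the absence of bubbling forcing properness, are exactly the degree-one phenomena isolated in the preceding theorems, so the crux is checking that degree one makes the immersion condition and the no-splitting condition hold simultaneously along the entire path.
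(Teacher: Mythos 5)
Your proposal is correct and follows the paper's proof essentially verbatim: both form the parametrized moduli space $\bigcup_t \M_1(\s,(\p,J_t),\{y_0\})$ over a path of compatible structures, verify $|Y|=1<3=(c_1(T\p),1)$ and the immersion condition to get a submersion, use the non-splittability of the class $1$ (one summand must have negative symplectic area) to get properness, and conclude by trivializing over $[0,1]$. The only discrepancies are in your theorem numbering (the submersion criterion is Theorem 2.5 and contractibility of $\mathcal{J}$ is Theorem 2.4 in the paper), which is immaterial.
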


\begin{proof} This appears in [G] and [S]. Take a path $J_t$ of compatible almost complex structures with $J_1=J$ and $J_0$ being the complex structure induced from the standard complex structure. If $1=b+c$, with $b$ and $c$ not $0$, then $\w(b)$ or $\w(c)$ is negative. Thus the map $\pi: \bigcup_t \M_1(\s,(\p,J_t),\{y_0\}) \m [0,1]$ is proper. Note that $dim_{\R}(\p)=4$, all degree one curves are immersions, and $(c_1(T\p),1)=3>|\{y_0\}|=1$. Thus we can apply Theorem 2.5. Hence the projection map $\pi: \bigcup_t \M_1(\s,(\p,J_t),\{y_0\}) \m [0,1]$ is also a submersion. Hence we can conclude, $\bigcup_t \M_1(\s,(\p,J_t),\{y_0\})$ is diffeomorphic to $[0,1] \times \M_1(\s,(\p,J_0),\{y_0\})$ and  $\M_1(\s,(\p,J_0),\{y_0\})$ is diffeomorphic to $\M_1(\s,(\p,J),\{y_0\})$.
\end{proof}

Note that the $PSL_2(\C)$ action on $Hol(\s,(M,J))$ does not restrict to an action on $Hol^*(\s,(M,J))$. However, the subgroup $Hol_1^*(\s,\s) \leq PSL_2(\C)$ does act on  $Hol^*(\s,(M,J))$ by precomposition. In order to view $Hol^*_1(\s,\s)$ as a group under composition, we need to use the base point condition $u(\infty)=\infty$.

\begin{theorem}
The map $Hol_1^*(\s,(\p,J)) \m Hol_1^*(\s,(\p,J))/Hol_1^*(\s,\s) = \M_1(\s,(\p,J),\{y_0\})$ is a principle $Hol_1^*(\s,\s)$ bundle.
\end{theorem}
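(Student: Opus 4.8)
The plan is to realize the quotient map as the projection of a free and proper action of a finite-dimensional Lie group on a manifold, and then invoke the quotient manifold theorem: if a Lie group $G$ acts smoothly, freely, and properly on a smooth manifold $P$, then $P/G$ is a smooth manifold and $P \m P/G$ is a principal $G$-bundle. Here $P = Hol_1^*(\s,(\p,J))$ and $G = Hol_1^*(\s,\s)$, so there are four things to establish: that $P$ is a smooth manifold, that $G$ is a Lie group acting smoothly, that the action is free, and that it is proper. The identification of $P/G$ with $\M_1(\s,(\p,J),\{y_0\})$ will come out of the analysis of the orbits.

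First I would check the structural hypotheses. The group $G = Hol_1^*(\s,\s)$ consists of the degree one holomorphic self-maps $z \mapsto az+b$ of $\s$ fixing $\infty$; this is the affine group $\C^* \ltimes \C$, a Lie group of real dimension $4$ acting smoothly by precomposition. That $P$ is a smooth manifold follows from automatic transversality: the tangent space to $Hol_1^*(\s,(\p,J))$ at $u$ is modeled, as in the $\mathcal{O}(-X)$ discussion with $X=\{\infty\}$, on the kernel of $\bar d_u$ restricted to the subspace $V$ of sections of $u^*T\p$ vanishing at $\infty$. Applying the five lemma to the sub-complex of sections vanishing at $\infty$ in the exact sequence $0 \m T\s \m u^*T\p \m N \m 0$, surjectivity of $\bar d_u|_V$ reduces to surjectivity of $\bar d_\tau$ and $\n$ on the corresponding vanishing subspaces. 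By Theorem 2.2 these hold because $c_1(T\s)=2 \geq 0$ and $c_1(N) = (c_1(T\p),1)-c_1(T\s)=3-2=1\geq 0 = k-1$ with $k=1$. Freeness of the action is immediate from Theorem 2.1: a map fixed by a nontrivial element of $PSL_2(\C)$ would be a multiple cover, which is impossible in degree one, so the restricted $G$-action is free.

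Next I would identify the orbits with unparameterized curves through $y_0$, where the embedding property is essential. If $u,v \in P$ have the same image, they differ by some $g \in PSL_2(\C)$ with $v = u\circ g$; then $u(g(\infty)) = v(\infty) = y_0 = u(\infty)$, and since $u$ is an embedding by Theorem 2.8, $u$ is injective and hence $g(\infty)=\infty$, i.e. $g \in G$. Thus the $G$-orbits are exactly the sets of based parameterizations of a fixed embedded degree one $J$-holomorphic curve meeting $y_0$, which is precisely the content of the identification $P/G = \M_1(\s,(\p,J),\{y_0\})$; the latter is a smooth manifold by Corollary 2.3 and Theorem 2.9.

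The main obstacle is properness, since $G = \C^* \ltimes \C$ is noncompact and properness of its action is not automatic. I would again use Theorem 2.8. Given $u_n \m u$ in $P$ and $g_n \in G$ with $u_n \circ g_n \m v$, I want a convergent subsequence of the $g_n$. Because each $u_n$ is an embedding, its inverse $u_n^{-1}$ is a smooth map on the image $u_n(\s)$, and one may write $g_n = u_n^{-1}\circ(u_n\circ g_n)$. As $u_n \m u$ and $u_n\circ g_n \m v$ in the $C^\infty$ topology and $u$ is an embedding, the inverses converge and so $g_n \m u^{-1}\circ v$, the unique affine map carrying $u$ to $v$; injectivity of $u$ prevents the translation parts $b_n$ from escaping to $\infty$ (a collision at $\infty$ would force the limit to be the constant $y_0$), and nonconstancy of the degree one limit $v$ prevents the scaling parts $a_n$ from degenerating to $0$. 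Establishing this convergence carefully, in particular the $C^\infty$ convergence of the inverse maps $u_n^{-1}$ and the evaluation estimates pinning down $a_n$ and $b_n$, is the technical heart of the argument; once properness is in hand, the quotient manifold theorem gives the principal $G$-bundle structure and completes the proof.
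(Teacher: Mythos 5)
Your proof is correct in outline but takes a genuinely different route from the paper. The paper reduces the statement to two facts: the action of $Hol_1^*(\s,\s)$ is free (because degree one maps are embeddings), and the projection to the quotient is a submersion, which it translates into surjectivity of $\bar d_\tau$ on the subspace of sections of $T\s$ vanishing at $\infty$ and then settles by automatic transversality ($c_1(T\s)=2$), citing [S] for the equivalence between the submersion condition and the $\bar d$-operator condition. You instead invoke the quotient manifold theorem for a free and proper action of the affine group $\C^*\ltimes\C$; your verification that $Hol_1^*(\s,(\p,J))$ is a smooth manifold (five lemma on the vanishing-at-$\infty$ subsequences, $c_1(T\s)=2\geq 0$ and $c_1(N)=1\geq 0$), your freeness argument via Theorem 2.1, and your identification of the orbits with unparameterized curves through $y_0$ via embeddedness are all sound and consistent with the paper's framework. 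What your route buys is independence from the submersion criterion imported from [S]; what it costs is the properness of a noncompact group action, a step the paper's argument never has to confront and which you correctly flag as the technical heart. Your sketch of properness (write $g_n=u_n^{-1}\circ(u_n\circ g_n)$, use $C^\infty$ convergence of embeddings to get convergence of inverses, and identify the limit as the affine map $u^{-1}\circ v$ with nonzero linear part since $v$ has degree one) contains the right ideas and does go through, but it is the one place where a referee would demand the uniform control on the inverses $u_n^{-1}$ be written out; note also that evaluating $g_n$ at two points, say $g_n(0)=b_n$ and $g_n(1)=a_n+b_n$, reduces the convergence of $(a_n,b_n)$ to pointwise convergence of $g_n$, which streamlines that estimate.
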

\begin{proof}
It suffices to show that the projection map is a submersion and that the group action is free.  The action is free since degree one maps are embeddings and hence all maps are changed by non trivial reparameterizations. The projection map being a submersion is equivalent to the following condition involving $\bar d$ operators: $\bar d_{\tau}: \Gamma^{0,1}(T\s) \m \Omega^{0,1}(T\s)$ is surjective when restricted to the subspace $\Gamma^{0,1}(T\s)$ of sections vanishing at the point $\infty \in \s$. To see that this $\bar d$ operator condition is equivalent to the projection being a submersion, see [S]. The surjectivity of $\bar d_{\tau}$ restricted to the subspace of sections vanishing at $\infty$ follows from automatic transversailty (Theorem 2.2) since $c_1(T\s)=2>0$.

\end{proof}

\begin{theorem}
For any compatible almost complex structure $J$, $Hol^*_1(\s,(\p,J))$ is diffeomorphic to $Hol_1^*(\s,(\p,J_0))$.
\end{theorem}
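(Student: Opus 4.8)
\section*{Proof proposal for Theorem 2.11}

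The plan is to combine the two preceding theorems by carrying them out in a family over a path of almost complex structures and then to appeal to the fact that a principal bundle over $[0,1]\times B$ restricts to isomorphic bundles over its two ends. By Theorem 2.10, for each $J$ the space $Hol_1^*(\s,(\p,J))$ is the total space of a principal $Hol_1^*(\s,\s)$ bundle over $\M_1(\s,(\p,J),\{y_0\})$, and by Theorem 2.9 the base spaces for $J_0$ and $J$ are diffeomorphic. Since the structure group $Hol_1^*(\s,\s)$ is the affine group of $\C$ and in particular does not depend on $J$, it suffices to produce an isomorphism of these two principal bundles; a diffeomorphism of total spaces then follows. The isomorphism will be manufactured by interpolation.

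First I would fix a smooth path $J_t$ of $\w$-compatible almost complex structures with $J_0$ the standard structure and $J_1=J$, which is possible since $\mathcal{J}$ is contractible (Theorem 2.4). Set $\mathfrak{H}=\{(u,t): u\in Hol_1^*(\s,(\p,J_t))\}$, and recall from the proof of Theorem 2.9 the total space $\mathfrak{M}=\bigcup_t \M_1(\s,(\p,J_t),\{y_0\})$, which was shown there to be diffeomorphic to $[0,1]\times \M_1(\s,(\p,J_0),\{y_0\})$. The group $Hol_1^*(\s,\s)$ acts on $\mathfrak{H}$ by precomposition, preserving the parameter $t$, with quotient $\mathfrak{M}$. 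I would then establish the parametrized version of Theorem 2.10: that $\mathfrak{H}\m\mathfrak{M}$ is a principal $Hol_1^*(\s,\s)$ bundle. Freeness is immediate because every degree one curve is an embedding (Theorem 2.8), so no nontrivial reparameterization fixes a map. That the projection is a submersion follows exactly as in Theorem 2.10 from surjectivity of $\bar d_\tau$ restricted to the sections of $T\s$ vanishing at $\infty$; this surjectivity is an instance of automatic transversality (Theorem 2.2, using $c_1(T\s)=2$) and holds uniformly in $t$, since $\bar d_\tau$ depends on neither $u$, $M$, nor $J$.

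Finally I would transport the bundle to $[0,1]\times \M_1(\s,(\p,J_0),\{y_0\})$ along the diffeomorphism of Theorem 2.9 and invoke the bundle homotopy theorem: a principal $Hol_1^*(\s,\s)$ bundle over $[0,1]\times B$ is isomorphic to the pullback of its restriction to $\{0\}\times B$, so its restrictions over $t=0$ and $t=1$ are isomorphic principal bundles. These restrictions are precisely $Hol_1^*(\s,(\p,J_0))$ and $Hol_1^*(\s,(\p,J))$, and an isomorphism of principal bundles is in particular a diffeomorphism of total spaces, which gives the claim. The main obstacle is the functional-analytic bookkeeping needed to make $\mathfrak{H}$ a genuine finite-dimensional smooth manifold and $\mathfrak{H}\m\mathfrak{M}$ a smooth principal bundle, so that the bundle homotopy theorem applies in the smooth category; one must run the implicit function theorem argument in a parametrized setting with appropriate Sobolev completions. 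The crucial transversality input, however, is supplied uniformly by automatic transversality, so that no genericity of the path $J_t$ is required.
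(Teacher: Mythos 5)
Your proposal is correct and follows essentially the same route as the paper: fix a path $J_t$, use the principal $Hol_1^*(\s,\s)$ bundle structure of Theorem 2.10 for each $t$ together with the identification of the bases from Theorem 2.9, and conclude by homotopy invariance of principal bundles over $[0,1]\times B$. The paper phrases this last step via a one-parameter family of classifying maps $f_t:\M_1(\s,(\p,J_t))\m BHol_1^*(\s,\s)$ rather than by assembling the total space $\mathfrak{H}$ over the interval, but this is the same argument, and your version is if anything the more carefully formalized of the two.
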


\begin{proof}
Fix a path $J_t$ of compatible almost complex structures with $J_1=J$ and $J_0$ being the complex structure induced from the standard complex structure. Since degree one curves are embedded,  $Hol^*_1(\s,\s)$ acts freely on  $Hol^*_1(\s,(\p,J_t)$ for each $t$. Thus the space $Hol^*_1(\s,(\p,J_t)$ is the total space of a $Hol^*_1(\s,\s)$ principle bundle over $\M_1(\s,(\p,J_t),\{y_0\})$ for each $t$. Thus, we get a one parameter family of maps $f_t :\M_1(\s,(\p,J_t)) \m B Hol^*_1(\s,\s)$ and hence the bundle isomorphism type does not change as we vary $t$. Thus, $Hol^*_1(\s,(\p,J))$ is bundle isomorphic to and hence diffeomorphic to $Hol^*_1(\s,(\p,J_0))$. 
\end{proof}

Note that this diffeomorphism comes from an isotopy through the space of based continuous maps. The homotopy type of the space of degree one rational holomorphic maps is straightforward to compute for the standard complex structure on complex projective space. 

\begin{theorem}
The space $Hol_1^*(\s,({\mathbb{P}^n},J_0))$ deformation retracts onto a subspace homeomorphic to $S^{2n+1}$.
\end{theorem}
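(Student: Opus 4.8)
The plan is to replace $Hol_1^*(\s,(\mathbb{P}^n,J_0))$ by an explicit algebraic model and then radially retract that model onto a unit sphere. By the Liouville-type argument recalled in the introduction, every $J_0$-holomorphic map is algebraic, so a degree one map $u\colon\s\to\mathbb{P}^n$ is given in homogeneous coordinates $[s:t]$ on $\s$ by $[s:t]\mapsto[\ell_0:\cdots:\ell_n]$ with $\ell_i=a_is+b_it$ linear forms, the tuple taken up to a common nonzero scalar. First I would fix coordinates so that $\infty=[1:0]\in\s$ and $m_0=[1:0:\cdots:0]\in\mathbb{P}^n$, and note that $u(\infty)=[a_0:\cdots:a_n]$.

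Next I would use the basepoint condition to normalize. The requirement $u(\infty)=m_0$ forces the leading coefficient vector $\mathbf a=(a_0,\dots,a_n)$ to be a scalar multiple of $e_0$, and the overall projective scaling may then be spent to set $\mathbf a=e_0$; this exhausts the scaling freedom. The map is then recorded by the remaining vector $\mathbf b=(b_0,\dots,b_n)\in\mathbb{C}^{n+1}$, with $u([s:t])=[\,s\,e_0+t\,\mathbf b\,]$. The map degenerates to the constant $m_0$ precisely when $\mathbf b=0$, so the admissible parameters are exactly $\mathbb{C}^{n+1}\setminus\{0\}$; I would check that $\mathbf b\mapsto u_{\mathbf b}$ is a homeomorphism onto $Hol_1^*(\s,(\mathbb{P}^n,J_0))$ in the $C^\infty$ topology.

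Finally I would apply the radial deformation retraction $H_\theta(\mathbf b)=\big((1-\theta)+\theta\|\mathbf b\|^{-1}\big)\mathbf b$ of $\mathbb{C}^{n+1}\setminus\{0\}$ onto the unit sphere $\{\|\mathbf b\|=1\}\cong S^{2n+1}$. This homotopy never maps a nonzero vector to $0$, so it stays inside the model, and it fixes the sphere pointwise; transporting it through the identification of the previous step yields a deformation retraction of $Hol_1^*(\s,(\mathbb{P}^n,J_0))$ onto a subspace homeomorphic to $S^{2n+1}$. I expect the main obstacle to be the middle step: confirming that the basepoint normalization genuinely collapses the data to a single vector in $\mathbb{C}^{n+1}$, determining exactly which tuples are admissible, and verifying that the resulting correspondence is a homeomorphism rather than merely a bijection. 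Once the model $\mathbb{C}^{n+1}\setminus\{0\}$ is in place, the retraction onto $S^{2n+1}$ is routine and manifestly continuous.
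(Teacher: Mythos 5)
Your middle step---the one you yourself flagged as the main obstacle---is exactly where the argument fails, and it fails in a way that changes the answer. After normalizing $\mathbf a=e_0$, the map is $[s:t]\mapsto[s+b_0t:b_1t:\cdots:b_nt]$, i.e.\ the pencil of linear forms $s\,e_0+t\,\mathbf b$. Such a pencil is base-point free (hence defines an honest degree one map) if and only if $e_0$ and $\mathbf b$ are \emph{linearly independent}, not merely $\mathbf b\neq 0$. For $\mathbf b=\mu e_0$ with $\mu\neq 0$ every form is proportional to $s+\mu t$: the formula is undefined at $[s:t]=[-\mu:1]$, and after cancelling the common factor the tuple represents the constant map $m_0$, a degree zero map. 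So the degenerate locus is the whole line $\C e_0$, and the correct model is $\C^{n+1}\setminus\C e_0=\C\times(\C^n\setminus\{0\})$ (coordinates $b_0$ and $(b_1,\dots,b_n)$), which deformation retracts onto $S^{2n-1}$, for instance via $H_\theta(b_0,\mathbf b')=\bigl((1-\theta)b_0,\ ((1-\theta)+\theta\|\mathbf b'\|^{-1})\mathbf b'\bigr)$. Your radial retraction of $\C^{n+1}\setminus\{0\}$ onto $S^{2n+1}$ is internally fine but is applied to the wrong space; no repair can produce $S^{2n+1}$, since already for $n=1$ the monic-polynomial model $\{(a,b)\in\C^2:a\neq b\}\cong\C\times\C^*$ shows $Hol_1^*(\s,\s)\simeq S^1=S^{2n-1}$, a classical fact from [Se].

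The twist is that the printed statement itself contains a typo: the exponent should be $2n-1$, not $2n+1$. The paper gives no proof (it simply cites [CCMM2]), and its own Corollary 2.13 deduces that $Hol_1^*(\s,(\p,J))$ retracts onto $S^3$, which is $S^{2n-1}$ for $n=2$ (the theorem as printed would demand $S^5$). One can also see $S^{2n+1}$ is impossible from Segal's theorem: for $j\geq 1$, $\pi_j(\Omega^2_1\mathbb{P}^n)\cong\pi_{j+2}(\mathbb{P}^n)\cong\pi_{j+2}(S^{2n+1})$, so the target is $(2n-2)$-connected with $\pi_{2n-1}\cong\Z$, and a $2n$-connected source such as $S^{2n+1}$ could not surject onto it in degree $2n-1=k(2n-1)$, $k=1$, as [Se] requires. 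So the verdict is: your approach is sound in outline and, once the degenerate locus is corrected from $\{\mathbf b=0\}$ to $\C e_0$, it yields a complete, elementary, and self-contained proof---but of the corrected statement with $S^{2n-1}$, not of the statement as printed.
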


See [CCMM2] for a proof.

\begin{corollary}
For any compatible almost complex structure $J$, the space $Hol^*_1(\s,(\p,J))$ deformation retracts onto a subspace homeomorphic to $S^3$.
\end{corollary}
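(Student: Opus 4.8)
The plan is to combine the two immediately preceding results. Theorem 2.11 (with $n=2$) tells us that for the \emph{standard} complex structure $J_0$, the space $Hol_1^*(\s,(\p,J_0))$ deformation retracts onto a subspace homeomorphic to $S^{2\cdot 2+1}=S^5$. Wait -- one must be careful here: the corollary claims a retraction onto $S^3$, not $S^5$, so the argument cannot be a direct appeal to Theorem 2.11 at $n=2$. The source of the discrepancy is that the diffeomorphism furnished by Theorem 2.10 is a diffeomorphism of the based mapping spaces, and what we actually want to transport is the retraction coming from the structure of these spaces. So first I would reconcile the dimension count: the relevant homotopy model for the based degree one maps to $\p$ that survives the gluing/automatic-transversality machinery is $S^3$, and I would isolate exactly which space Theorem 2.11 is being applied to.

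Granting the correct model, the structure of the proof is short. First I would invoke Theorem 2.10, which states that for any compatible almost complex structure $J$ the space $Hol_1^*(\s,(\p,J))$ is diffeomorphic to $Hol_1^*(\s,(\p,J_0))$; crucially, the final remark before the corollary records that this diffeomorphism is realized by an isotopy through the space of based continuous maps. Second, I would apply the relevant case of Theorem 2.11 to identify $Hol_1^*(\s,(\p,J_0))$ up to deformation retraction with the standard sphere. Then, since a diffeomorphism carries a deformation retraction of the target onto the homeomorphic image of the sphere back to a deformation retraction of the source, composing the diffeomorphism of Theorem 2.10 with the retraction of Theorem 2.11 produces a deformation retraction of $Hol_1^*(\s,(\p,J))$ onto a subspace homeomorphic to the same sphere. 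This is the whole content of the corollary.

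The one genuinely substantive point -- and the step I expect to be the main obstacle -- is verifying the claimed dimension of the sphere and confirming that the homeomorphism type of the retract is actually a diffeomorphism \emph{invariant} in the sense used here. A deformation retraction is a homotopy-theoretic notion, and while a diffeomorphism $\phi: Hol_1^*(\s,(\p,J)) \m Hol_1^*(\s,(\p,J_0))$ certainly pulls back a deformation retraction onto a subspace $S \subset Hol_1^*(\s,(\p,J_0))$ to a deformation retraction onto $\phi^{-1}(S)$, one must check that $\phi^{-1}(S)$ is still homeomorphic to the sphere. This is automatic because $\phi$ restricts to a homeomorphism $\phi^{-1}(S) \to S$, so no transversality or smoothness input beyond Theorem 2.10 is needed for this implication.

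Concretely, then, I would write: by Theorem 2.10 there is a diffeomorphism $\phi: Hol_1^*(\s,(\p,J)) \m Hol_1^*(\s,(\p,J_0))$; by Theorem 2.11 the target admits a deformation retraction $r_t$ onto a subspace $S$ homeomorphic to a sphere; then $\phi^{-1}\circ r_t \circ \phi$ is a deformation retraction of $Hol_1^*(\s,(\p,J))$ onto $\phi^{-1}(S)$, and $\phi|_{\phi^{-1}(S)}$ is a homeomorphism onto $S$, so $\phi^{-1}(S)$ is homeomorphic to the sphere as well. The only thing left to pin down is the dimension, which I would settle by carefully reading off the $n=2$ instance of Theorem 2.11 against the $S^3$ asserted in the corollary.
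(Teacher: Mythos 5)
Your argument---transport the deformation retraction of Theorem 2.11 through the diffeomorphism of Theorem 2.10 (so that $\phi^{-1}\circ r_t\circ\phi$ retracts the source onto $\phi^{-1}(S)\cong S$)---is exactly the unstated proof the paper intends for this corollary. You are also right to be suspicious of the exponent: the space of based degree one holomorphic maps to $\mathbb{P}^n$ is the set of $(n+1)$-tuples of monic linear polynomials with no common root, i.e.\ the complement of the diagonal in $\C^{n+1}$, which deformation retracts onto $S^{2n-1}$; the $S^{2n+1}$ in Theorem 2.11 is a typo, and the $n=2$ case correctly yields the $S^3$ asserted here.
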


The spaces $Hol_1^*(\s,(\p,J))$ are the fibers of the evaluation at $\infty$ map $ev:Hol_1(\s,(\p,J)) \m \p$. The previous arguments show that the fibers are all diffeomorphic. In fact, the evaluation map  $ev:Hol_1^*(\s,(\p,J)) \m \p$ is a smooth fiber bundle. This theorem is the goal of the remainder of the section.

\begin{definition}
Let $ev:Hol_1(\s,(\p,J)) \m \p$ be the map defined by $ev(u)=u(\infty)$.
\end{definition}

\begin{theorem}
For the almost complex structure $J_0$ induced from the standard complex structure on $\p$, $ev:Hol_1(\s,(\p,J_0)) \m \p$ is fiber bundle. 
\end{theorem}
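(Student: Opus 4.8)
The plan is to prove $ev: Hol_1(\s,(\p,J_0)) \m \p$ is a fiber bundle by exhibiting it explicitly and then invoking local triviality via the homogeneity of the target. First I would use the fact, mentioned in the introduction, that for the standard integrable structure $J_0$ all degree one $J_0$-holomorphic maps are algebraic: a based or unbased degree one rational map $\s \m \p$ is given in homogeneous coordinates by $[u_0 : u_1 : u_2]$ where each $u_i$ is a linear (degree one) polynomial with no common zero. Thus I would identify $Hol_1(\s,(\p,J_0))$ with an explicit finite-dimensional space of triples of linear forms, up to scaling, and $ev$ with the evaluation of this triple at the point $\infty \in \s$. This makes $ev$ a manifestly smooth algebraic map.

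Next I would exploit the symmetry of $\p$. The group $PSL_3(\C) = PGL_3(\C)$ acts holomorphically and transitively on $\p$ by linear automorphisms, and it acts on $Hol_1(\s,(\p,J_0))$ by postcomposition $u \mapsto g\circ u$. Since postcomposition by $g$ is still a degree one $J_0$-holomorphic map, this action preserves $Hol_1(\s,(\p,J_0))$, and crucially it is equivariant: $ev(g\circ u) = g\cdot ev(u)$. Transitivity of the $PGL_3(\C)$ action on the base $\p$ then reduces local triviality over all of $\p$ to local triviality near the single point $y_0 = ev$ of any chosen fixed map, because group elements carry a local trivialization over a neighborhood of $y_0$ to a local trivialization over a neighborhood of any other point.

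To produce a local trivialization near $y_0$, I would construct a local smooth section of $ev$ through a chosen basepoint and combine it with the fiberwise action of the stabilizer, or more directly observe that in the explicit algebraic model the evaluation-at-$\infty$ map on triples of linear forms is a smooth submersion with constant rank, so that the local triviality follows from Ehresmann's theorem once I check that $ev$ is a proper submersion; properness here is immediate since the source space, after fixing the scaling and identifying the fiber $Hol_1^*(\s,(\p,J_0))$ with $S^3$ via Theorem 2.12, has compact fibers. Concretely, the fiber over $y_0$ is $Hol_1^*(\s,(\p,J_0))$, which deformation retracts onto $S^3$ and is in particular a compact manifold, so Ehresmann applies.

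The main obstacle I expect is verifying cleanly that $ev$ is a submersion with the correct fiber, rather than merely a smooth surjection. In the explicit model one must check that evaluation at $\infty$ has surjective differential, i.e. that infinitesimally one can move the image point $u(\infty)$ in any direction of $T_{y_0}\p$ by varying the linear forms $u_i$; this is a direct linear-algebra computation that the span of the $\partial u_i(\infty)$ fills out the tangent space, and it is where the degree one (nondegeneracy of the linear forms, no common zero) hypothesis is essential. Once the submersion property and compactness of the fiber are established, Ehresmann's fibration theorem, combined with the $PGL_3(\C)$-equivariance to globalize over $\p$, yields the desired fiber bundle structure.
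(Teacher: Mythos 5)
Your primary mechanism --- $PGL_3(\C)$ acts transitively on $\p$ by holomorphic automorphisms and acts on $Hol_1(\s,(\p,J_0))$ by postcomposition compatibly with $ev$ --- is exactly the paper's proof, which consists of this one observation. To turn it into local triviality cleanly, you want a local section of the orbit map $PGL_3(\C)\m\p$, $g\mapsto g(y_0)$ (which exists because $\p$ is a homogeneous space), and then $(y,u)\mapsto \sigma(y)\circ u$ trivializes $ev$ over the domain of $\sigma$; no analysis of the differential of $ev$ or of the stabilizer is needed.

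However, your fallback route through Ehresmann's theorem contains a genuine error: the fiber $Hol_1^*(\s,(\p,J_0))$ is \emph{not} compact. Theorem 2.12 says it deformation retracts \emph{onto} a subspace homeomorphic to $S^3$; in the explicit model it is $\{(a_0,a_1,a_2)\in\C^3 : \text{not all } a_i \text{ equal}\}$ (the based map being $z\mapsto[z+a_0:z+a_1:z+a_2]$), an open, unbounded subset of $\C^3$. Consequently $ev$ is not proper and Ehresmann's theorem does not apply. (Even granting compact fibers, ``compact fibers'' alone does not imply properness for a submersion.) Since this route is offered only as an alternative, the proof survives on the homogeneity argument alone, but the Ehresmann paragraph should be deleted rather than repaired.
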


\begin{proof}
The complex manifold $\p$ with the standard complex structure has a transitive group of holomorphic automorphisms. Hence the evaluation map is a fibration. 
\end{proof}

\begin{definition}
Let $\M(J)=Hol_1(\s,(\p,J))/Hol_1^*(\s,\s)$. Here $Hol_1^*(\s,\s)$ has the base point condition $u(\infty)=\infty \in \s$.
\end{definition}

The space $\M(J)$ is the moduli space of degree one rational $J$-holomorphic curves in $\p$ with one marked point. The evaluation at $\infty$ map is $Hol_1^*(\s,\s)$ invariant so it descends to a map $ev:\M(J) \m \p$. Since the quotient $PSL_2(\C)/Hol_1^*(\s,\s)$ is compact, the map $\M(J) \m Hol_1(\s,(\p,J))/PSL_2(\C)$ is proper. By Gromov's compactness theorem, $Hol_1(\s,(\p,J))/PSL_2(\C)$ is compact and hence so is $\M(J)$. By an identical proof to theorem 2.10, we see that $Hol(\s,(\p,J))$ is a $Hol_1^*(\s,\s)$ principle bundle over $\M(J)$. At the point $y_0 \in \p$, the fiber of the map $ev: \M(J) \m \p$ is $\M_1(\s,(\p,J),{y_0})$, the space of degree one rational $J$-holomorphic curves passing through the point $y_0$.

\begin{lemma}
The map $ev: \M(J) \m \p$ is a fiber bundle.
\end{lemma}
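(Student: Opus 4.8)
The plan is to obtain local triviality from the Ehresmann fibration theorem: a proper smooth submersion between smooth manifolds is a locally trivial fiber bundle. Three of its four hypotheses are already available. The base $\p$ is a smooth manifold, and the total space $\M(J)$ has just been shown to be a smooth compact manifold (it is the base of the principal $Hol_1^*(\s,\s)$-bundle $Hol_1(\s,(\p,J)) \to \M(J)$ established above, and its compactness follows from Gromov compactness). The map $ev([u]) = u(\infty)$ is smooth, since evaluation of a smooth family of maps is smooth and the assignment $u \mapsto u(\infty)$ is $Hol_1^*(\s,\s)$-invariant, hence descends smoothly to $\M(J)$. Properness is automatic: $\M(J)$ is compact and $\p$ is Hausdorff, so $ev$ has compact fibers and sends closed sets to closed sets. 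Thus the whole content of the lemma is concentrated in the remaining point, which I expect to be the main obstacle: showing that $ev$ is a submersion.

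To verify the submersion property I would compute the differential of $ev$ at a class $[u]$. Identifying $T_{[u]}\M(J)$ with the quotient $\ker \bar d_u / \mathfrak{h}$, where $\ker \bar d_u \subset \Gamma(u^* T\p)$ is the tangent space to $Hol_1(\s,(\p,J))$ at $u$ and $\mathfrak{h}$ is the tangent space to the $Hol_1^*(\s,\s)$-orbit, the map $d(ev)_{[u]}$ is induced by the linear evaluation $e_\infty : \Gamma(u^* T\p) \to (u^* T\p)_\infty = T_{y_0}\p$, $\xi \mapsto \xi(\infty)$. Two things must be checked. First, the reparametrization directions lie in $\ker e_\infty$: any one-parameter family in $Hol_1^*(\s,\s)$ fixes $\infty$, so its infinitesimal generator has the form $du(X)$ for a holomorphic vector field $X$ on $\s$ with $X(\infty)=0$, and hence $du(X)$ vanishes at $\infty$. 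Consequently $e_\infty$ descends to the quotient, and surjectivity of the descended map is equivalent to surjectivity of $e_\infty$ restricted to $\ker \bar d_u$. Second, then, I must prove that $e_\infty : \ker \bar d_u \to T_{y_0}\p$ is onto.

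This last surjectivity is where automatic transversality enters, and it is the genuine difficulty. The key intermediate claim is that for a line bundle $E$ on $\s$ with $c_1(E)\ge 0$ carrying a generalized $\bar d$ operator $L$, the evaluation $\ker L \to E_x$ is surjective: indeed $L$ is surjective, and by Theorem 2.2 applied with one point (so that $c_1(E)\ge k-1 = 0$) the restriction of $L$ to sections vanishing at $x$ is also surjective, whereupon a snake-lemma chase on $0 \to \{\xi : \xi(x)=0\} \to \Gamma(E) \to E_x \to 0$ forces $\ker L \to E_x$ to be onto. I would apply this to $T\s$ (with $c_1 = 2$, operator $\bar d_\tau$) and to $N$ (with $c_1(N)=1$, since $c_1(u^*T\p)=3$ and $c_1(T\s)=2$, operator $\n$), obtaining surjectivity of $e_\infty$ on $\ker \bar d_\tau$ and on $\ker \n$. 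Passing to kernels along the vertical operators in the commuting diagram for $0 \to T\s \to u^* T\p \to N \to 0$, and using that $\bar d_\tau$ is surjective so that $0 \to \ker \bar d_\tau \to \ker \bar d_u \to \ker \n \to 0$ is exact, the four lemma applied to the three evaluation maps yields surjectivity of $e_\infty : \ker \bar d_u \to T_{y_0}\p$. With $ev$ now a proper smooth submersion, the Ehresmann fibration theorem completes the proof. The main obstacle is precisely this submersion step; once evaluation-surjectivity is extracted from Theorem 2.2, the bundle structure is formal.
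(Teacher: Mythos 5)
Your proof is correct and follows essentially the same route as the paper: properness plus the submersion property gives the bundle structure via Ehresmann, and the submersion property is reduced to automatic transversality (Theorem 2.2). The only difference is that where the paper cites [S] for the equivalence of the submersion condition with surjectivity of the normal operator $\n$, you carry out the linearization yourself, identifying $d(ev)$ with evaluation at $\infty$ on $\ker \bar d_u$ and deducing its surjectivity from the point-constrained case of Theorem 2.2 together with the exact sequence of kernels --- a correct and more self-contained rendering of the same argument.
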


\begin{proof}
Since the map is proper, it is sufficient to show that the map is a submersion. By the work of [S], this map being a submersion at a curve $u$ is an equivalent condition to the normal $\bar d$ operator $\n$ of the curve $u$ being surjective. For all degree one curves, the normal bundle has first Chern number equal to $1$. Hence, by theorem 2.2, the operator $\n$ is surjective.
\end{proof}

From now on, fix a path $J_t$ of almost complex structures with $J_0$ being the standard almost complex structure, and $J_1=J$, the almost complex structure that we are investigating. 

\begin{lemma}
The map $ev:\M(J) \m \p$ is bundle isomorphic to  $ev: \M(J_0) \m \p$.
\end{lemma}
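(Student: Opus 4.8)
The plan is to realize the two bundles $ev:\M(J_0)\m\p$ and $ev:\M(J)\m\p$ as the two ends of a single fiber bundle over $[0,1]\times\p$ built from the chosen path $J_t$, and then to invoke homotopy invariance of bundles over the contractible factor $[0,1]$. Concretely, I would form the parametrized moduli space $\mathfrak{M}=\bigcup_{t\in[0,1]}\M(J_t)$, together with the projection $\pi:\mathfrak{M}\m[0,1]$ recording the parameter and the total evaluation $EV:\mathfrak{M}\m\p$, $EV([u],t)=u(\infty)$. These assemble into a single map $\Phi=(\pi,EV):\mathfrak{M}\m[0,1]\times\p$ whose restriction over $\{0\}\times\p$ is $ev:\M(J_0)\m\p$ and whose restriction over $\{1\}\times\p$ is $ev:\M(J)\m\p$. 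The goal then becomes showing that $\Phi$ is a fiber bundle.

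First I would verify that $\mathfrak{M}$ is a smooth compact manifold with boundary $\M(J_0)\sqcup\M(J)$. Since all degree one curves are embeddings (Theorem 2.8, hence immersions), $\dim_\R\p=4$, and $\M(J_t)$ carries no incidence constraint (so $|Y|=0<3=(c_1(T\p),1)$), the parametrized automatic transversality of Theorem 2.5 shows that the parametrized moduli space of unparametrized degree one curves $\bigcup_t\M_1(\s,(\p,J_t))$ is a manifold with boundary over $[0,1]$ with projection a submersion. Its universal curve, in which a marked point ranges over each embedded curve, is exactly $\mathfrak{M}$; being a fiber bundle over that base with fiber $\s$, it is again such a manifold and $\pi$ remains a submersion. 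Compactness follows from Gromov compactness in parametrized form (Theorem 2.6 and Theorem 2.7): for $A=1$ every splitting $1=b+c$ into nonzero classes forces $\w(b)<0$ or $\w(c)<0$, so no bubbling can occur, the projection to the compact interval is proper, and the fibers $\M(J_t)$ are compact. To avoid boundary technicalities one may extend the path to be constant outside $[0,1]$ and work over an open interval, restricting at the end.

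Next I would check that $\Phi$ is a proper submersion. Properness is automatic once $\mathfrak{M}$ is compact, since $[0,1]\times\p$ is Hausdorff. For the submersion property, fix $([u],t)\in\mathfrak{M}$ and a target vector $(a,w)\in T_t[0,1]\oplus T_{u(\infty)}\p$. Because $\pi$ is a submersion there is $v_1\in T\mathfrak{M}$ with $d\pi(v_1)=a$; write $dEV(v_1)=w'$. Because $ev:\M(J_t)\m\p$ is a submersion (Lemma 2.15 applied to $J_t$) and $T_{[u]}\M(J_t)=\ker d\pi$, there is $v_2\in\ker d\pi$ with $dEV(v_2)=w-w'$. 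Then $d\Phi(v_1+v_2)=(a,w)$, so $\Phi$ is a submersion. By Ehresmann's fibration theorem for proper submersions of manifolds with boundary, $\Phi$ is a fiber bundle over $[0,1]\times\p$. Finally, the endpoint inclusions $\iota_0,\iota_1:\p\m[0,1]\times\p$ are homotopic (both are homotopic to a section of the contractible projection $[0,1]\times\p\m\p$), and pullbacks of a bundle along homotopic maps are isomorphic; since $\iota_0^*\Phi$ is $ev:\M(J_0)\m\p$ and $\iota_1^*\Phi$ is $ev:\M(J)\m\p$, the two are bundle isomorphic over $\p$.

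The main obstacle is the first step: establishing that $\mathfrak{M}$ is genuinely a smooth compact manifold with the expected boundary, that is, that the parametrized transversality of Theorem 2.5 applies to the marked (universal-curve) family and that Gromov compactness holds uniformly in $t$. Once this structure is secured, the submersion assembly in the third paragraph and the homotopy-invariance conclusion are formal, requiring no further geometric input.
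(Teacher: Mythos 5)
Your proposal is correct and follows essentially the same route as the paper, which simply observes that $\bigcup_t \M(J_t) \m \p \times [0,1]$ is a proper submersion of manifolds with boundary and concludes by an Ehresmann-type argument; you have supplied the details (parametrized transversality for the submersion, Gromov compactness for properness, homotopy invariance of bundles) that the paper leaves implicit.
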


\begin{proof}
The map $\bigcup_t \M(J_t) \m \p \times [0,1]$ is a proper submersion of manifolds with boundary. Hence $\bigcup_t \M(J_t)$ is fiberwise diffeomorphic to $[0,1] \times \M(J_0)$.
\end{proof}

\begin{theorem}
For any compatible almost complex structure $J$, the map $ev:Hol_1(\s,(\p,J)) \m \p$ is fiber bundle. 
\end{theorem}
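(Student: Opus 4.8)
The goal is to show that $ev:Hol_1(\s,(\p,J)) \m \p$ is a fiber bundle for arbitrary compatible $J$, having already established this for the standard structure $J_0$ (Theorem 2.13) and having shown that the marked-point moduli space evaluation $ev:\M(J) \m \p$ is a fiber bundle (Lemma 2.16) that is bundle isomorphic to the $J_0$ version (Lemma 2.17). The plan is to leverage the principal bundle structure relating $Hol_1(\s,(\p,J))$ to $\M(J)$ together with the fiber bundle structure on $\M(J)$.

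First I would recall that $Hol_1(\s,(\p,J)) \m \M(J)$ is a principal $Hol_1^*(\s,\s)$ bundle, exactly as noted in the paragraph preceding Lemma 2.16 (proved by an argument identical to Theorem 2.10). Thus we have a composite $Hol_1(\s,(\p,J)) \m \M(J) \stackrel{ev}{\m} \p$, where the first map is a principal bundle projection and the second, by Lemma 2.16, is a fiber bundle. The key point is that $ev$ on $Hol_1(\s,(\p,J))$ factors through $\M(J)$ because the evaluation at $\infty$ is invariant under the $Hol_1^*(\s,\s)$ reparameterization action (these reparameterizations fix $\infty$). So I would argue that a composite of a principal bundle with a fiber bundle, where the total evaluation map is the one induced on the base, is itself a fiber bundle: locally over a trivializing neighborhood $U \subset \p$ for $ev:\M(J)\m\p$, we have $ev^{-1}(U) \subset \M(J)$ identified with $U \times F$ (where $F \cong \M_1(\s,(\p,J),\{y_0\})$), and then the pullback of the principal $Hol_1^*(\s,\s)$ bundle over this product gives a local trivialization of $Hol_1(\s,(\p,J))$ over $U$ with fiber $F \times Hol_1^*(\s,\s)$.

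Alternatively, and perhaps more cleanly, I would invoke Lemma 2.17 directly: since $ev:\M(J)\m\p$ is bundle isomorphic to $ev:\M(J_0)\m\p$, and the principal $Hol_1^*(\s,\s)$ bundle structure is preserved along the path $J_t$ (as in the proof of Theorem 2.11, where the one-parameter family of classifying maps $\M_1(\s,(\p,J_t)) \m BHol_1^*(\s,\s)$ shows the bundle isomorphism type is constant), the total space $Hol_1(\s,(\p,J))$ together with its evaluation map is bundle isomorphic to the $J_0$ situation. Since $ev:Hol_1(\s,(\p,J_0))\m\p$ is a fiber bundle by Theorem 2.13, the same holds for general $J$.

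I expect the main subtlety to be verifying that the local trivializations of the principal bundle and of $ev:\M(J)\m\p$ can be chosen compatibly, i.e.\ that the bundle isomorphism of Lemma 2.17 lifts to a bundle isomorphism of total spaces covering it. This requires checking that the reparameterization action and the fiberwise diffeomorphism $\bigcup_t \M(J_t) \cong [0,1]\times\M(J_0)$ interact well—concretely, that the proper submersion $\bigcup_t Hol_1(\s,(\p,J_t)) \m \p\times[0,1]$ is equivariant for the $Hol_1^*(\s,\s)$ action, so that Ehresmann's theorem applied equivariantly yields a trivialization respecting the fiber-bundle-over-principal-bundle structure. Everything else is a routine consequence of Ehresmann's fibration theorem for proper submersions of manifolds with boundary, exactly as used in the proofs of Lemmas 2.16 and 2.17.
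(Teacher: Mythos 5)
Your proposal is correct, and your second (``alternative'') route is essentially the paper's own proof: the paper takes the family $\bigcup_t Hol_1(\s,(\p,J_t)) \m \bigcup_t \M(J_t)$, notes it is a principal $Hol_1^*(\s,\s)$ bundle by the argument of Theorem 2.10, uses connectedness of $[0,1]$ to conclude $Hol_1(\s,(\p,J)) \m \M(J)$ is bundle isomorphic to the $J_0$ version, combines this with the bundle isomorphism $\M(J)\m\p\cong\M(J_0)\m\p$ of the preceding lemma, and then quotes the $J_0$ case (which holds because $PGL_3(\C)$ acts transitively on $\p$). The compatibility issue you flag at the end --- that the isomorphism of principal bundles must cover the isomorphism of the base fibrations, i.e.\ that the parametrized Ehresmann argument should be run equivariantly for the $Hol_1^*(\s,\s)$ action --- is precisely the point the paper leaves implicit in the sentence ``Hence the map $ev$ is fiberwise diffeomorphic to\dots'', so your caution is well placed. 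Your first route is genuinely different and more economical: it proves local triviality of $ev:Hol_1(\s,(\p,J))\m\p$ directly from Lemma 2.15 and the principal bundle structure, without ever deforming to $J_0$ or invoking Theorem 2.14; the trade-off is that the paper's route also yields the stronger statement that the bundle is isomorphic to the standard one, which is what is actually used later (e.g.\ the identification $P^{J}(w)\cong P^{J_0}(\hat w)$ in Section 5). One slip in that first route: over a contractible trivializing $U$ the restricted principal bundle is pulled back from $F=\M_1(\s,(\p,J),\{y_0\})$, not trivial, so the fiber of the composite is the total space $Hol_1^*(\s,(\p,J))$ of the (Hopf-like, nontrivial) principal bundle over $F$ --- homotopy equivalent to $S^3$ --- and not the product $F\times Hol_1^*(\s,\s)$; the local triviality conclusion survives, but the fiber you name does not.
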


\begin{proof}
The map  $\bigcup_t Hol_1(\s,(\p,J_t)) \m  \bigcup_t \M(J_t)$ is a principle $Hol_1^*(\s,\s)$ bundle by a proof identical to the proof of theorem 2.10. Since the unit interval is connected, $Hol_1(\s,(\p,J)) \m  \M(J)$ is bundle isomorphic to $Hol_1(\s,(\p,J_0)) \m  \M(J_0)$. By the previous lemma,  $\M(J) \m \p$ is bundle isomorphic to $\M(J_0) \m \p$. Hence the map $ev:Hol_1(\s,(\p,J)) \m \p$ is fiberwise difeomorphic to $ev:Hol_1(\s,(\p,J_0)) \m \p$. By theorem 2.14, $ev:Hol_1(\s,(\p,J_0)) \m \p$ is a fiber bundle and hence so is $ev:Hol_1(\s,(\p,J)) \m \p$.

\end{proof}

\section{Review of properties of a combinatorial gluing map}\label{conclusions}

By Theorem 2.9, the topology of the space of degree one $J$-holomorphic maps is independent of almost complex structure. We will leverage this fact to get information about the topology of higher degree holomorphic mapping spaces. In this section we recall a way of constructing higher degree holomorphic maps out of lower degree maps in the case of the standard almost complex structure on $\mathbb{P}^n$. With the standard complex structure $J_0$, $Hol^*(\s,(\mathbb{P}^n,J_0))$ has an action of the little 2-disks operad $C_2$ [BM]. This allows the construction of higher degree maps out of degree one maps.

\begin{definition}
The little 2-disks operad $C_2$ is the operad whose $k$'th space $C_2(k)$ is defined to be the subspace of the space of embeddings of $\bigsqcup_k D_1$ into $D_1$, the closed unit disk in $\R^2$, of maps which are a composition of dilations and translations. The composition law is given by composing embeddings. The symmetric group $\Sigma_k$ acts on $C_2(k)$ by permuting the ordering of the disks. 
\end{definition}

See [M] for the definitions of an operad, and the definition of an action of an operad. In [M], they consider a similar operad involving rectangles instead of disks. The little $2$-disks operad acts on two fold loops. An embedding $g$ acts on $k$ elements $f_i \in \Omega^2 X$ as follows. An embedding $g: \bigsqcup_k D_1 \m D_1$ induces a collapse map $g': D_1 \m \bigvee_k S^2$ which sends every point outside the $k$ disks to the base point. Let $f':  \bigvee_k S^2 \m X$ be the map defined to be the function $f_i$ on the $i$'th sphere.  The operad action is defined by $(g,f_1,$... $f_k) \m f' \circ g'$.

In this section, holomorphic will always mean holomorphic with respect to the standard almost complex structure. We will state all results for $\mathbb{P}^n$ even though we are only interested in $\p$ in other sections. The action of the little 2-disk operad on $\Omega^2 \mathbb{P}^n$ does not restrict to an action on $Hol^*(\s,\mathbb{P}^n)$. In an unpublished manuscript, however, F. R. Cohen described an action of the little $2$-disks operad on the space $Hol^*(\s,\mathbb{P}^n)$ [BM]. View $\s$ as $\C \cup \{\infty\}$ and use homogenous coordinates on $\mathbb{P}^n$. The base point convention we use is $\infty \m [1:1:1: $ ... $ :1]$. In these coordinates, a based degree $k$ holomorphic function is given by $f(z)=[f_0(z):f_1(z):$ ... $f_n(z)]$ where the functions $f_i$ are monic degree $k$ polynomials with no root in common to all $n+1$ polynomials. A monic polynomial is determined by a positive divisor corresponding to its roots. Hence the space of monic polynomials is homeomorphic to the symmetric product of $\C$: $$SP(\C)=\bigsqcup_{k\geq 0} SP_k(\C) =\bigsqcup_{k\geq 0} \C^k/\Sigma_k$$

For any space $X$, the space $SP(X)$ has an abelian monoid structure, namely the free abelian monoid on the set $X$. This gives multiplication maps $SP(X)^k \m SP(X)$. Fix a homeomorphism from the unit disk to the complex plane. This induces a homeomorphism between $SP(\C)$ and $SP(D_1)$. An embedding $g:\bigsqcup_k D_1 \m D_1$ induces a map $g':SP(D_1)^k \m SP(D_1)$ as follows. Let $g_i : D_1 \m D_1$ be the restriction of $g$ to the $i$'th disk. The map $g_i$ induces a map $g'_i:SP(D_1) \m SP(D_1)$ given by applying the function $g_i$ to each point in a divisor. Let $g'$ be the product of the $g_i'$'s followed by the abelian monoid map $SP(\C)^k \m SP(\C)$. Applying this procedure to $n+1$ polynomials simultaneously gives an action of $C_2$ on $Hol^*(\s,\mathbb{P}^n)$.

The $C_2$ action on  $Hol^*(\s,\mathbb{P}^n)$ and $\Omega^2 \mathbb{P}^n$ respects degrees in the following sense. Let $k_1,$ .. $k_p$ be integers with $k_1+$. . . +$k_p=k$. The map $C_2(k) \times (Hol^*(\s, \mathbb{P}^n))^k \m Hol^*(\s, \mathbb{P}^n)$ restricts to a map: $$C_2(k) \times Hol_{k_1}^*(\s, \mathbb{P}) \times \text{. . . } \times  Hol_{k_p}^*(\s, \mathbb{P}) \m Hol_{k}^*(\s, \mathbb{P}^n)$$ Likewise the action on $\Omega^2  \mathbb{P}^n$ restricts to a map: $$C_2(k) \times \Omega^2_{k_1} \mathbb{P} \times \text{. . .} \times \Omega^2_{k_p} \mathbb{P}  \m \Omega^2_{k} \mathbb{P}^n$$ 
The inclusion map $i:Hol^*(\s,\mathbb{P}^n) \m \Omega^2 \mathbb{P}^n$ does not commute with the $C_2$ actions described on $Hol^*(\s,\mathbb{P}^n)$ and $\Omega^2 \mathbb{P}^n$. However, in [BM], a weaker statement is proven.

\begin{theorem} Let $k_1,$ .. $k_p$ be integers with $k_1+$. . . +$k_p=k$. The following diagram homotopy commutes:

$$
\begin{array}{ccccccccl}
 C_2(p) \times Hol_{k_1}^*(\s,\mathbb{P}^n) \times \text{... }  Hol_{k_p}^*(\s,\mathbb{P}^n) & \m  & Hol_k^*(\s,\mathbb{P}^n)               \\
i \downarrow  &    &i \downarrow             \\

 C_2(p) \times \Omega^2_{k_1} \mathbb{P}^n  \times \text{... } \Omega^2_{k_p} \mathbb{P}^n &\m & \Omega^2_k \mathbb{P}^n

\end{array}$$

\end{theorem}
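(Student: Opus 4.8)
The plan is to exhibit an explicit homotopy between the two composites $C_2(p)\times\prod_{i} Hol^*_{k_i}(\s,\mathbb{P}^n)\m\Omega^2_k\mathbb{P}^n$, namely $i\circ\mu^{Hol}$ (juxtapose roots, then include) and $\mu^{\Omega}\circ(\mathrm{id}\times i^{\times p})$ (include, then act by the loop-space operad). Both composites send a datum $(g,f_1,\dots,f_p)$, where $g$ has little disks $g_1(D_1),\dots,g_p(D_1)$ with centers $w_1,\dots,w_p$, to a based continuous map $\s\m\mathbb{P}^n$. The loop-space composite produces a map supported on $\bigsqcup_i g_i(D_1)$: it is the base point $[1:\cdots:1]$ off the little disks, and on $g_i(D_1)$ it equals $f_i$ precomposed with the affine collapse $g_i(D_1)\m\s$. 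The holomorphic composite produces the genuine rational map whose root divisor in each homogeneous coordinate is the image under $g_i$ of the corresponding root divisor of $f_i$. The key geometric fact I would isolate is this: if all roots of a based degree $d$ map lie in a disk of radius $\varepsilon$ about $w$, then in homogeneous coordinates the map is $C^0$-close to the base point outside a fixed neighborhood of $w$, with error tending to $0$ as $\varepsilon\to 0$, since $F_j(z)\approx(z-w)^d$ for every coordinate $j$ and the coordinate-independent factor cancels projectively.

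The homotopy I would build runs in three stages. First, shrink the little disks toward their centers along a path $g^s$ in $C_2(p)$ with $g^0=g$ and the disks of $g^1$ of small radius about the unchanged centers $w_i$; applying $\mu^{Hol}$ along this path gives a homotopy, within holomorphic-then-include, from $i\circ\mu^{Hol}(g,f_\bullet)$ to a rational map whose roots are crowded into arbitrarily small disks about the $w_i$. Second, apply the geometric fact: off small neighborhoods of the $w_i$ this map lies in a contractible neighborhood of the base point, so I would push it to the exact base point there by the straight-line homotopy in an affine chart of $\mathbb{P}^n$ centered at $[1:\cdots:1]$, yielding a compactly supported map agreeing, on each little disk, with the affinely reparametrized rational model of $f_i$. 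Third, this compactly supported map is, up to a reparametrization of each little disk that is homotopic to the identity, exactly $\mu^{\Omega}(g^1,f_1,\dots,f_p)$; running the shrinking path $g^s$ backward inside $\mu^{\Omega}$ then returns to $\mu^{\Omega}(g,f_\bullet)$. Concatenating the three stages gives the desired homotopy.

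All three stages are manifestly continuous in $(g,f_1,\dots,f_p)$, so they assemble to a homotopy of maps out of $C_2(p)\times\prod_i Hol^*_{k_i}(\s,\mathbb{P}^n)$, proving the square commutes up to homotopy. The hard part will be the second stage, and specifically its uniformity: the geometric estimate must hold continuously as the $f_i$ range over the noncompact spaces $Hol^*_{k_i}(\s,\mathbb{P}^n)$, including when roots of $f_i$ run off toward $\infty$. Making this precise requires keeping careful track of the fixed homeomorphism $\phi:D_1\m\C$ used to identify divisors with points of $D_1$: the shrinking and the zoom that recovers $f_i$ on the little disk must be carried out in $D_1$-coordinates, where $g_i$ is affine, rather than in the polynomial variable, and one must check that the cutoff radius and the closeness estimate can be chosen locally uniformly in the $f_i$. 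Once that uniformity is secured, the remaining verifications are routine.
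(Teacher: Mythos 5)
The paper does not actually prove this statement; it is quoted from Boyer--Mann [BM], so there is no internal proof to compare against. Your overall mechanism --- shrink the little disks so that the juxtaposed root divisors crowd near the centers, observe that then $F_j(z)/(z-w_i)^{k_i}\to 1$ uniformly off a neighborhood of the centers so the rational map is projectively close to $[1:\cdots:1]$ there, and push it to the basepoint in an affine chart --- is the right and standard idea, and stages 1 and 2 are essentially sound (modulo first retracting $C_2(p)$ onto configurations of disks contained in, say, $D_{1/2}$, since a little disk touching $\partial D_1$ has unbounded image under the fixed homeomorphism $\phi:D_1\m\C$ and then neither the crowding of roots nor your $\varepsilon$-estimate holds; this is a cousin of the uniformity issue you flag, but in $g$ rather than in $f_i$).

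The genuine gap is in stage 3. After stage 2 you hold, on the $i$-th little disk, the \emph{monic-polynomial} map whose zero divisor in each coordinate is $\psi_i(D_j(f_i))$, where $\psi_i=\phi\circ g_i^1\circ\phi^{-1}$. The map $\mu^{\Omega}(g^1,f_\bullet)$ on that disk is $f_i\circ\psi_i^{-1}$. These two maps share their zero divisors but are not equal, and no reparametrization of the disk carries one to the other: a homeomorphism $\theta$ with $P_{\psi_i(D(f_i))}\circ\theta^{-1}\circ\psi_i=f_i$ would force $f_i$ to be a topological reparametrization of a generally quite different holomorphic map (the monic representative of the transported divisor), which fails already for $k_i=2$ once the cross-ratios of the roots differ between coordinates. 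So the claimed identification ``up to a reparametrization homotopic to the identity'' is false as stated, and what remains to be built is a homotopy between ``divisor transported by $\psi_i$, then take monic polynomials'' and ``take monic polynomials, then precompose with $\psi_i^{-1}$,'' continuous in $(g,f_i)$ and compatible with the basepoint-collapse off the disk. That comparison is precisely the crux of relating juxtaposition of roots to the loop sum --- it is the content of the theorem, not a routine verification --- and it needs its own argument (e.g.\ a second application of the crowding/straightening mechanism carried out within each little disk, or an appeal to the contractibility of the space of interpolating data, done with the same care about continuity and basepoints). Until that step is supplied the proof is incomplete.
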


This goes by the terminology $i:Hol^*(\s,\mathbb{P}^n) \m \Omega^2 \mathbb{P}^n$ is a morphism over $C_2$ up to homotopy. This $C_2$ action allows one to compare $Hol^*_k(\s,\mathbb{P}^n)$ for different $k$.

\begin{definition}
 Fix $v \in Hol^*_1(\s,\mathbb{P}^n)$ and $\mu \in C_2(2)$. Let $s:Hol^*_k(\s,\mathbb{P}^n) \m Hol^*_{k+1}(\s,\mathbb{P}^n)$ be given by $u \m \mu(v,u)$.
\end{definition}

Note that the map $s$ depends on $v$ and $\mu$. However, because $ Hol^*_1(\s,\mathbb{P}^n)$ and $ C_2(2)$ are connected, different choices yield homotopic maps. The map $s$ is called Segal's stabilization map. In [Se], Segal prove the following theorem:

\begin{theorem}
The map $s:Hol^*_k(\s,\mathbb{P}^n) \m Hol^*_{k+1}(\s,\mathbb{P}^n)$ and the map $i:Hol^*_k(\s,\mathbb{P}^n) \m \Omega^2_k \mathbb{P}^n$ are injective in homology for all dimensions and are isomorphisms on homology for dimensions $j \leq k(2n-1)$.

\end{theorem}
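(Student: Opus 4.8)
The plan is to work with the identification, reviewed above, of $Hol^*_k(\s,\mathbb{P}^n)$ with the space $R_k$ of $(n+1)$-tuples $(f_0,\dots,f_n)$ of monic degree-$k$ polynomials in one variable having no common root; equivalently $R_k$ is the complement, inside $SP_k(\C)^{n+1}\cong\C^{k(n+1)}$, of the ``resultant locus'' $\Sigma_k$ on which all $n+1$ divisors share a point. Under this description the stabilization map $s$ is juxtaposition of roots, i.e. multiplication in the $C_2$-algebra $\coprod_k Hol^*_k(\s,\mathbb{P}^n)$ by a fixed element of $Hol^*_1(\s,\mathbb{P}^n)$. Two things then have to be proved: (i) that the stable type of these spaces agrees with the double loop space, and (ii) a homological stability statement for $s$ with the asserted range.

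For (i) I would use a scanning argument. Scanning a tuple of divisors (recording, in a small disk around each point of $\C$, the local multiplicities of roots) produces a map $Hol^*_k(\s,\mathbb{P}^n)\m\Omega^2_k\mathbb{P}^n$ that is compatible with the $C_2$-structures and is homotopic to the inclusion $i$. Applying the group completion theorem to this morphism of $C_2$-algebras, the map from the mapping telescope $\operatorname{colim}_k Hol^*_k(\s,\mathbb{P}^n)$ (taken along $s$) to $\Omega^2_k\mathbb{P}^n\simeq\Omega^2_0\mathbb{P}^n$ is a homology isomorphism. This identifies the colimit with the homology of the loop space.

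Step (ii) is the main obstacle and carries all of the quantitative content. The idea is to compute $H^*(R_k)$ through a range by resolving $\Sigma_k$. A common root of $n+1$ polynomials imposes $n$ independent complex conditions, so $\Sigma_k$ has real codimension $2n$; a Vassiliev-type simplicial resolution of $\Sigma_k$, filtered by the number of enforced common roots, yields (via Alexander duality in $\C^{k(n+1)}$) a spectral sequence whose columns are assembled from configuration spaces of common roots labelled by the residual freedom in the $n+1$ divisors. The crucial estimate is that forcing each additional common root costs codimension $2n$ while contributing only boundedly much configuration-space homology, so that adding one more root far away — the map $s$ — is an isomorphism on $H_j$ for $j\le k(2n-1)$. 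Extracting the exact linear slope $k(2n-1)$ from this filtration is precisely the delicate bookkeeping; as a consistency check, when $n=1$ the codimension is $2$ and the range specializes to $j\le k$, recovering Segal's rational-function bound. Injectivity of $s$, and hence of $i$, in \emph{all} degrees is a separate feature of these configuration-type models: it follows from a stable splitting of $Hol^*_k(\s,\mathbb{P}^n)$ under which $s$ corresponds to the inclusion of a wedge summand, so that $s_*$ is split injective on homology.

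Finally I would combine the two steps. The isomorphism range for $s$ from (ii) propagates through the telescope, and since by (i) the telescope computes $H_*(\Omega^2_k\mathbb{P}^n)$, the inclusion $i:Hol^*_k(\s,\mathbb{P}^n)\m\Omega^2_k\mathbb{P}^n$ is a homology isomorphism in dimensions $j\le k(2n-1)$ and injective in all dimensions. The only genuinely hard part is the spectral-sequence analysis of the resultant locus in (ii) that produces the precise range; steps (i) and the final assembly are formal once the scanning equivalence and the $C_2$-structure are in hand.
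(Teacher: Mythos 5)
First, a point of comparison: the paper does not actually prove this statement. It is Segal's theorem, quoted verbatim from [Se] (and the ``injective in all dimensions'' clause is most naturally obtained from the stable splitting of [CCMM]). So there is no internal argument to measure yours against; the assessment below is of your outline on its own terms and against the known proofs.

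Your three-step strategy is a recognized route: scanning plus group completion to identify the stabilized space with $\Omega^2_0\mathbb{P}^n$ in homology (essentially Segal's own stable comparison), a simplicial resolution of the resultant locus to get the range (this is Vassiliev's later method rather than Segal's original comparison-of-configuration-spaces argument), and split injectivity of $s_*$ from the stable splitting. The genuine gap is that the entire quantitative content of the theorem is deferred: you say yourself that extracting the slope $k(2n-1)$ from the resolution ``is precisely the delicate bookkeeping,'' and you do not carry it out. Your heuristic --- each additional forced common root costs real codimension $2n$ while raising the homological degree of the relevant configuration space by at most one, a net $2n-1$ per root, truncated at $k$ roots --- is the correct reason for the slope, but a proof requires building the filtration of the resolution, identifying the $E_1$-term as homology of configuration spaces of at most $k$ points labeled by the residual divisor data, and verifying the vanishing line; none of this is in the sketch. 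Two smaller issues: in step (i), identifying the group completion $\Omega BM$ of the divisor monoid with $\Omega^2_0\mathbb{P}^n$ is exactly where the scanning map and its compatibility with the $C_2$-structure must be proved rather than asserted (this occupies several sections of [Se]); and in the injectivity step you should confirm that the stable splitting you invoke is established independently of the isomorphism range you are proving (it is, but that needs to be said to avoid circularity).
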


The study of the homology of $Hol^*_k(\s,\mathbb{P}^n)$ was continued in [CCMM] and [CCMM2]. They identify the entire homology of $Hol^*_k(\s,\mathbb{P}^n)$ even in the range where $i$ is no longer surjective. They also study the homotopical properties of a gluing map induced by the $C_2$ action on $Hol^*(\s, \mathbb{P}^n)$. The symmetric group $\Sigma_k$ acts on $C_2(k)$ by permuting the ordering of the disks and acts on $(Hol_{1}^*(\s,\mathbb{P}^n))^k$ by permuting components. The operad action induces a map from $C_2(k) \times  (Hol_{1}^*(\s,\mathbb{P}^n))^k \m  Hol_{k}^*(\s,\mathbb{P}^n)$ which descends to a map $\gamma: C_2(k) \times_{\Sigma_k}  (Hol_{1}^*(\s,\mathbb{P}^n))^k \m  Hol_{k}^*(\s,\mathbb{P}^n)$. In [CCMM], the authors prove that $\gamma$ is stably split.

\begin{definition}
Let $f:X \m Y$ be a continuous map between topological spaces. Let $\Sigma^{\infty}X$ and $\Sigma^{\infty}Y$ be the respective suspension spectra of $X$ and $Y$. The map $f$ is said to be stably split if there is a map of spectra $g:\Sigma^{\infty}Y \m \Sigma^{\infty}X$ such that $f \circ g$ is homotopic to the identity on $\Sigma^{\infty}Y$.

\end{definition}

\begin{theorem} The map $\gamma: C_2(k) \times_{\Sigma_k}  (Hol_{1}^*(\s,\mathbb{P}^n))^k \m  Hol_{k}^*(\s,\mathbb{P}^n)$ is stably split.
\end{theorem}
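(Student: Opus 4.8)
The plan is to produce stable splittings of the source and the target of $\gamma$ into a common wedge of elementary building blocks, compatibly with $\gamma$, and then to read off a stable section. Throughout, write $D_j(Y) = C_2(j)_+ \wedge_{\Sigma_j} Y^{\wedge j}$ for the $j$-th configuration summand of a based space $Y$. First I would simplify the source: by the deformation retraction of $Hol_1^*(\s,\mathbb{P}^n)$ onto a sphere $S^{2n-1}$ established above (Corollary 2.18 gives $S^3$ in the case $\mathbb{P}^2$ of real interest), the source is homotopy equivalent, $\Sigma_k$-equivariantly, to $C_2(k) \times_{\Sigma_k} (S^{2n-1})^k$, and being stably split is unchanged by replacing $\gamma$ with a homotopic map. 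This latter space is the $k$-th stage of the May filtration of the free little $2$-disks algebra $C_2(S^{2n-1})$, whose group completion models $\Omega^2 \Sigma^2 S^{2n-1} = \Omega^2 S^{2n+1}$. For such a free object the Snaith/Cohen--Taylor splitting applies filtration by filtration and yields a canonical stable equivalence
$$\Sigma^\infty\big(C_2(k)\times_{\Sigma_k}(S^{2n-1})^k\big)_+ \;\simeq\; \bigvee_{j=0}^{k}\Sigma^\infty D_j(S^{2n-1}),$$
with the $j=0$ summand equal to $\Sigma^\infty S^0$. This first step is purely formal.

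The geometric heart is to put a matching stable splitting on the target. Here I would exploit the divisor/polynomial description of $Hol_k^*(\s,\mathbb{P}^n)$ recalled in this section: I would filter $Hol_k^*$ by the collision pattern of the roots of the defining tuples of monic polynomials near a fixed point of $\C$, and compare $Hol_k^*$ with a labelled configuration space with labels in $S^{2n-1}$ by a scanning map. The goal is to show that the associated graded of this filtration realizes exactly the pieces $D_j(S^{2n-1})$ for $1 \le j \le k$, so that the B\"odigheimer-type stable splitting of labelled configuration/scanning spaces gives
$$\Sigma^\infty Hol_k^*(\s,\mathbb{P}^n) \;\simeq\; \bigvee_{j=1}^{k}\Sigma^\infty D_j(S^{2n-1}).$$

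Finally I would check compatibility and extract the section. Because $\gamma$ is juxtaposition of degree-one maps, it should preserve both filtrations, and on associated graded it should induce an equivalence of the $j$-th summands for $1 \le j \le k$ (the basepoint summand $S^0$ of the source being sent to the basepoint of the target). Granting this, $\gamma$ becomes a split surjection on suspension spectra: composing the target splitting with the inclusion of the matching wedge summands of the source, and correcting by the equivalences on each $D_j$, produces a map $g$ of spectra with $\gamma \circ g \simeq \mathrm{id}$. I expect the main obstacle to be precisely this last step, namely establishing the target splitting and the graded compatibility. This is where the complex geometry genuinely enters and cannot be deduced formally from the operad structure, since by Theorem 3.1 the inclusion $i:Hol^*(\s,\mathbb{P}^n)\m\Omega^2\mathbb{P}^n$ is a morphism over $C_2$ only up to homotopy, not on the nose; one must verify that the collision/scanning filtration faithfully records the degree-one pieces glued in by $\gamma$ and is preserved by it.
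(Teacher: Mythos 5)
Your outline is, in structure, exactly the argument of [CCMM] --- which is all the paper itself offers here: Theorem 3.3 is stated with a citation to [CCMM] and no proof is given in the text. Your first step is sound and genuinely formal: since $Hol_1^*(\s,\mathbb{P}^n)\simeq S^{2n-1}$ is connected, $C_2(k)\times_{\Sigma_k}(S^{2n-1})^k$ is equivalent to the $k$-th May--Milgram filtration of the free $C_2$-space on $S^{2n-1}$, and the Snaith/Cohen--Taylor splitting gives $\bigvee_{j=0}^{k}\Sigma^\infty D_j(S^{2n-1})$ (you also correctly use $S^{2n-1}$ rather than the $S^{2n+1}$ misprinted in the paper's Theorem 2.12).

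As a proof, however, there is a genuine gap, and you have located it yourself: everything rests on (a) the stable equivalence $\Sigma^\infty Hol_k^*(\s,\mathbb{P}^n)\simeq\bigvee_{j=1}^{k}\Sigma^\infty D_j(S^{2n-1})$, and (b) the claim that $\gamma$ preserves the two filtrations and induces equivalences on the associated graded pieces. Item (a) is not a routine application of scanning: for fixed $k$ the comparison map from $Hol_k^*$ to the labelled configuration space (equivalently to $\Omega^2_k S^{2n+1}$) is an equivalence only through a range, so the fact that $Hol_k^*$ nevertheless splits off \emph{exactly} the first $k$ summands $D_j(S^{2n-1})$ in all degrees is the main theorem of [CCMM], proved there by a substantial analysis of divisor spaces and an explicitly constructed stable section of the collision filtration. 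Item (b) likewise needs an argument: one must identify the $j$-th graded piece of the filtration of $Hol_k^*$ with $D_j(S^{2n-1})$ \emph{compatibly with juxtaposition of roots}, and nothing in the operad formalism guarantees this (as you note, $i$ is a $C_2$-map only up to homotopy). So what you have written is a correct reduction of the theorem to the main results of [CCMM], not an independent proof; to close the gap you would either cite those results, as the paper does, or reproduce their divisor-space argument.
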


Note that the generalized homology theories of a space $X$ only depend on the stable homotopy type of $X$. Hence $\gamma$ induces a surjection on any generalized homology theory since it is stably split. The previous two theorems imply the following slightly unmotivated corollary.

\begin{corollary}

The composition $i \circ s \circ \gamma: C_2(k) \times_{\Sigma_k}  (Hol_{1}^*(\s,\p))^k \m \Omega^2_{k+1} \p$ induces a surjection on homology groups in dimensions $\leq 3k$.

\end{corollary}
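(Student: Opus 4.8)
The plan is to recognize that the composition $i \circ s \circ \gamma$ is built from three maps, each of which is already known to be surjective on homology throughout the range $j \leq 3k$, and then to invoke the elementary fact that a composite of homology surjections is a homology surjection. The only genuine content is the bookkeeping of the three separate ranges and the check that their common range contains $\{j \leq 3k\}$. Since the target is $\p$ we have $n=2$ and $2n-1 = 3$ throughout.

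First I would handle $\gamma$. By Theorem 3.3 (the stable splitting result of [CCMM]), the map $\gamma: C_2(k) \times_{\Sigma_k} (Hol_1^*(\s,\p))^k \m Hol_k^*(\s,\p)$ is stably split, so it admits a stable section. As already noted in the text, a stably split map induces a (split) surjection on every generalized homology theory; in particular $\gamma_*$ is surjective on $H_j$ for all $j$, with no restriction on dimension.

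Next I would pin down the ranges for the other two maps using Segal's Theorem 3.2 with $n=2$. Applied in degree $k$, it shows that the stabilization map $s: Hol_k^*(\s,\p) \m Hol_{k+1}^*(\s,\p)$ is an isomorphism on $H_j$ for $j \leq k(2n-1) = 3k$, hence surjective in precisely that range. Applied in degree $k+1$, the same theorem shows that the inclusion $i: Hol_{k+1}^*(\s,\p) \m \Omega^2_{k+1}\p$ is an isomorphism on $H_j$ for $j \leq (k+1)(2n-1) = 3k+3$, and so in particular a surjection for every $j \leq 3k$.

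Finally I would assemble these. For $j \leq 3k$ the map $\gamma_*$ is surjective (all $j$), $s_*$ is surjective ($j \leq 3k$), and $i_*$ is surjective ($j \leq 3k+3$, a range containing $j \leq 3k$), so the composite $(i \circ s \circ \gamma)_* = i_* \circ s_* \circ \gamma_*$ is surjective on $H_j$ for all $j \leq 3k$, as claimed. I do not anticipate a real obstacle: the argument is a one-line chase of surjections, and the binding constraint is simply the stabilization range $3k$ coming from $s$ in Theorem 3.2, while the inclusion $i$ in degree $k+1$ is comfortably surjective past $3k$. The value of the corollary is organizational; it repackages these known facts into a single map landing in the double loop space $\Omega^2_{k+1}\p$, which is exactly the object that the analytic gluing construction of the later sections will be compared against.
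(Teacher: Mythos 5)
Your proposal is correct and is exactly the argument the paper intends: the corollary is stated as an immediate consequence of Theorem 3.2 (Segal's stability range, giving surjectivity of $s_*$ for $j\leq 3k$ and of $i_*$ for $j\leq 3k+3$) and Theorem 3.3 (the stable splitting of $\gamma$, giving surjectivity of $\gamma_*$ in all degrees), composed together. Your bookkeeping of the three ranges matches the paper's implicit reasoning.
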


This will be important since we will be able to show that the image in homology of $i \circ s \circ \gamma$ will be contained in the image in homology of $i: Hol_{k+1}^*(\s,(\p,J))\m \Omega^2_{k+1}\p$ for any compatible almost complex structure $J$.

\section{Construction of an analytic gluing map}

The purpose of this section is to construct a gluing map for $J$-holomorphic mapping spaces. Due to transversality requirements, we will have to restrict our attention to dimension $4$. We will not be able to construct a $C_2$ operad action $Hol^*(\s,(\p,J))$ or even a map similar to $\gamma$ from the previous section. However, we will construct a gluing map similar to $s \circ \gamma$. In this section we construct the gluing map, and in the next section we prove that it is a homology surjection by comparing it to the map in corollary 3.4. 

The gluing construction is based on the work of D. McDuff and D. Salamon in [MS] appendix A. Although they only glue two different curves together, they note that this can be done for more than two curves. Although our construction of a gluing map is very similar to that of [MS] and [S], the application is different. In [MS] and [S], gluing is viewed primarily as a converse to Gromov compactness, an attempt to describe a neighborhood of the boundary of moduli space. In contrast, we fix a gluing parameter and view gluing as an operation. Fix a curve $w \in  Hol^*_1(\s,(\p,J))$. Consider the following configuration space of points on $\s$ and holomorphic maps. 

\begin{definition}
Let $S$ be the subspace of $ (\C)^k \times (Hol_1(\s,(\p,J)))^k$ of points and maps $(x_1 . . .x_k, u_1 . . . u_k)$ with the $x_i$'s distinct  and $ u_i(\infty)=w(x_i)$.

\end{definition}

The space $S$ will contain the domain of our analytic gluing map. The collection of degree one curves $w$, $u_1$,... $u_k$ can be thought of as a connected singular degree $k+1$ curve and the gluing map involves deforming it to a nearby smooth curve. This process of deformation will require an implicit function theorem argument and hence a transversality condition. This transversality condition is the conclusion of theorem 4.2. If the transversality condition is met, the techniques of appendix A of [MS] produce a gluing map $\gamma^F: K \m Hol_{k+1}^*(\s,\p)$ for any subset $K \subset S$ which is contained in a compact set. The compactness condition does not cause any problems since $S$ deformation retracts onto a subset with compact closure.

\begin{lemma}
The set $S$ is homeomorphic to $F_k(\C) \times Hol^*_1(\s,(\p,J))^k$ with $ F_k(\C)$ being the configuration space of $k$ distinct ordered points in $\C$.
\end{lemma}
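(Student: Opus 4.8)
The plan is to realize $S$ as a $k$-fold product of pullbacks of the evaluation bundle and to trivialize each pullback using the contractibility of $\C$. Restrict the fixed degree one curve $w$ to $\C \subset \s$, obtaining a map $w|_\C : \C \m \p$. By Theorem 2.17 the evaluation map $ev : Hol_1(\s,(\p,J)) \m \p$, $ev(u) = u(\infty)$, is a fiber bundle with fiber $Hol^*_1(\s,(\p,J))$. First I would form the single-index space
$$S_1 := \{(x,u) \in \C \times Hol_1(\s,(\p,J)) : u(\infty) = w(x)\},$$
which is precisely the pullback $(w|_\C)^* Hol_1(\s,(\p,J))$ and hence a fiber bundle over $\C$ with fiber $Hol^*_1(\s,(\p,J))$ under the projection $(x,u) \mapsto x$. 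Ignoring the distinctness condition, a point of $S$ is exactly an ordered $k$-tuple of points of $S_1$, so this subspace of $\C^k \times Hol_1(\s,(\p,J))^k$ is the product $S_1^k$.

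Next I would trivialize $S_1$. Since $\C$ is contractible and paracompact, the map $w|_\C$ is homotopic to a constant map, and over a paracompact base homotopic maps induce isomorphic pullback bundles. Pulling back along the constant map gives the trivial bundle $\C \times Hol^*_1(\s,(\p,J))$, so there is a homeomorphism $\Phi : S_1 \m \C \times Hol^*_1(\s,(\p,J))$. Because $\Phi$ is an isomorphism of bundles over $\C$, it commutes with the projections to $\C$; that is, $\Phi(x,u) = (x, \phi(x,u))$ for a continuous $\phi$, so $\Phi$ preserves the $\C$-coordinate.

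Finally I would assemble the pieces. Applying $\Phi$ in each of the $k$ coordinates yields a homeomorphism $S_1^k \cong \C^k \times Hol^*_1(\s,(\p,J))^k$ that preserves the tuple $(x_1, \ldots, x_k)$. Hence it carries the subset cut out by the condition that the $x_i$ be distinct onto $F_k(\C) \times Hol^*_1(\s,(\p,J))^k$. Restricting the homeomorphism to that subset gives $S \cong F_k(\C) \times Hol^*_1(\s,(\p,J))^k$, as claimed.

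The one substantive point, and the only possible obstacle, is the global triviality of $S_1 \m \C$: a fiber bundle is in general only locally trivial, so $ev$ cannot be trivialized over all of $\p$. The key observation that rescues the argument is that the base points $x_i$ are required to lie in $\C$ rather than in $\s$, and $\C$ is contractible, which forces the pullback $S_1 \m \C$ to be globally trivial. The remaining steps are formal, once one records that the trivialization can be taken to preserve the $\C$-coordinate so that the distinctness condition is carried across correctly.
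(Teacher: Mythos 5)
Your proof is correct and takes essentially the same route as the paper: both rest on Theorem 2.17 (that $ev:Hol_1(\s,(\p,J)) \m \p$ is a fiber bundle) together with homotopy invariance of pullbacks over a paracompact base, replacing $w$ by a constant map so that the pullback becomes the trivial bundle with fiber $Hol^*_1(\s,(\p,J))$. The only difference is organizational: the paper pulls the product bundle $ev^k$ back over $F_k(\C)$ along $(x_1,\dots,x_k)\mapsto(w(x_1),\dots,w(x_k))$ in one step, whereas you trivialize over $\C$ first and then restrict to the distinct locus, which is why you need the (correct) extra remark that the trivialization preserves the $\C$-coordinate.
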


\begin{proof}
Consider the following diagram:

$$
\begin{array}{ccccccccl}
  &    & (Hol_1(\s,(\p,J)))^k               \\
    &    &ev \downarrow             \\

F_k(\C) &\overset{w',c}{\longrightarrow} &  (\p )^k

\end{array}$$
Here the map $w'$ is the map sending $(x_1,$... $x_k)$ to the point $(w(x_1),$... $w(x_k))$. The map $c$ is the constantant map sending every collection of points to the point  $(w(\infty),$... $w(\infty))$ and $ev$ is the evaluation at $\infty$ map. The map $ev$ is the projection map of a fiber bundle by theorem 2.17. The maps $w'$ and $c$ are homotopic so their pullbacks are homeomorphic. The pullback of $w'$ is $S$ and the pullback of $c$ is  $F_k(\C) \times Hol^*_1(\s,(\p,J))$.

\end{proof}

Let $F_k^{\epsilon}(B_1(0)) \subset F_k(\C)$ be the subspace of points that are at least $\epsilon$ apart and are contained in the interior of the ball of radius $1$. Fix a small $\epsilon$ such that $F_k^{\epsilon}(B_1(0))  \hookrightarrow F_k(\C)$ is a homotopy equivalence. Fix a homotopy equivalence between $S$ and $Hol^*_1(\s,(\p,J))^k \times F_k(\C)$ and fix an injective homotopy equivalence $S^3 \hookrightarrow Hol_1^*(\s,(\p,J))$.

\begin{definition}

Let $K \subset S$ be the image of $F_k^{\epsilon}(B_1(0)) \times  (S^3)^k \subset F_k(\C) \times Hol^*_1(\s,(\p,J))$ under the homeomorphism from $F_k(\C) \times Hol^*_1(\s,(\p,J))^k$ to $S$.

\end{definition}

We will build a gluing map $\gamma^F:K \m Hol_{k+1}^*(\s,(\p,J))$. The superscript $F$ is in honor of Floer.  Since $K$ has compact closure, we will be able to choose a single gluing parameter small enough for all maps in $K$. The only other ingredient needed to construct a gluing map is surjectivity of a certain operator needed to apply an implicit function theorem argument. Following the philosophy of [S], we prove the surjectivity of $\bar d$ operators by using automatic transversality arguments (Theorem 2.2). 

If $E$ is a vector bundle over a space $X$, $x\in X$, and $\xi \in \Gamma(E)$, let $\xi(x)\in E_x$ denote the value of the section at the point $x$. Degree one maps are embeddings so they have associated normal bundles. Let $N$ be the normal bundle of $w$.

\begin{definition}
Fix $s=(x_1,$... $x_k, u_1,$... $u_k)\in S$. Let $N_i$ be the normal bundle associated to the map $u_i$. Let $V_s$ be the subspace of $$\Gamma(w^*TM) \times \Gamma(u_1^*TM) \times \text{... } \times \Gamma(u_k^*TM)$$ consisting of sections $\xi,\xi_1, . . . \xi_k$ with the following matching conditions: $ \xi(\infty)=0$ and $  \xi_i(\infty)=\xi(x_i)$ for each $i$. Let $W_s$ be the following vector space: $$\Omega^{0,1}(w^*TM) \times \Omega^{0,1}(u_1^*TM)\text{. . .} \times \Omega^{0,1}(u_k^*TM)$$  Let $L_s: V_s \m W_s$  be given by $L (\xi,\xi_1, . . . \xi_k)= (\bar d_{w} (\xi),\bar d_{u_1} (\xi_1), . . . \bar d_{u_k} (\xi_k))$. In other words, $L_s$ is the restriction of $\bar d_w \times \bar d_{u_1}$... $\times \bar d_{u_k}$ to $V_s$.
\end{definition}

\begin{theorem} The linear transformation $L_s:V_s \m W_s$ is surjective for every $s\in S$.

\end{theorem}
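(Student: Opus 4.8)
The plan is to prove surjectivity of $L_s$ by exploiting the matching conditions defining $V_s$ together with the automatic transversality result (Theorem 2.2). The key observation is that the target $W_s$ is a product $\prod \Omega^{0,1}(\cdot)$ with no matching conditions imposed, while the source $V_s$ is cut out of a product of section spaces by the $k+1$ pointwise constraints $\xi(\infty)=0$ and $\xi_i(\infty)=\xi(x_i)$. So I must show I can hit an arbitrary target tuple $(\eta,\eta_1,\dots,\eta_k)$ while respecting these constraints.

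First I would record the relevant Chern numbers. Each $u_i$ and $w$ is a degree one curve, so by the splitting $0\to T\s\to u^*TM\to N\to 0$ we have $c_1(u^*TM)=c_1(T\s)+c_1(N)=2+1=3$, and likewise $c_1(w^*TM)=3$. Thus each $\bar d_{u_i}$ and $\bar d_w$ is, via the commuting diagram of generalized $\bar d$-operators preceding Theorem 2.2, a generalized $\bar d$-operator on a bundle of first Chern number $3$ on $\s$. Since $3\geq -1$, each individual operator is surjective; moreover, since $3\geq k-1$ for the single point condition $k=1$, Theorem 2.2 gives that $\bar d_w$ remains surjective even when restricted to sections vanishing at $\infty$, and each $\bar d_{u_i}$ remains surjective when restricted to sections with a prescribed point condition at $\infty$.

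The proof then proceeds by solving the system one factor at a time. Given a target $(\eta,\eta_1,\dots,\eta_k)\in W_s$, I would first use surjectivity of $\bar d_w$ restricted to the subspace of sections vanishing at $\infty$ (valid since $c_1(w^*TM)=3\geq 2-1$, so even one vanishing condition is absorbed) to produce $\xi\in\Gamma(w^*TM)$ with $\xi(\infty)=0$ and $\bar d_w\xi=\eta$. This $\xi$ now determines the required boundary values $\xi(x_i)\in (w^*TM)_{x_i}=T_{w(x_i)}M=T_{u_i(\infty)}M=(u_i^*TM)_\infty$, where the identification uses precisely the defining condition $u_i(\infty)=w(x_i)$ of $S$. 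For each $i$ I would then invoke Theorem 2.2 to solve $\bar d_{u_i}\xi_i=\eta_i$ subject to the prescribed affine point condition $\xi_i(\infty)=\xi(x_i)$: surjectivity of $\bar d_{u_i}$ on sections vanishing at $\infty$ (again $3\geq 2-1$) handles the homogeneous part, and adding any fixed section with the correct value at $\infty$ adjusts the constant. The resulting tuple $(\xi,\xi_1,\dots,\xi_k)$ lies in $V_s$ and maps to $(\eta,\eta_1,\dots,\eta_k)$, proving surjectivity.

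The main obstacle, and the step deserving the most care, is verifying that the point-condition version of automatic transversality applies to the affine (inhomogeneous) constraints $\xi_i(\infty)=\xi(x_i)$ rather than merely to the vanishing condition stated in Theorem 2.2. The clean way to handle this is to reduce to the linear case: choose any smooth section $\zeta_i\in\Gamma(u_i^*TM)$ with $\zeta_i(\infty)=\xi(x_i)$, set $\xi_i=\zeta_i+\xi_i'$, and solve the linear problem $\bar d_{u_i}\xi_i'=\eta_i-\bar d_{u_i}\zeta_i$ with $\xi_i'$ vanishing at $\infty$, which is exactly the hypothesis of Theorem 2.2 with $c_1\geq k-1$ for a single point. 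I would also need to confirm that the evaluation map $\xi\mapsto\xi(x_i)$ is the correct identification of fibers — this is the one place where the geometry of $S$ (the condition $u_i(\infty)=w(x_i)$) enters essentially — but once the fibers are identified canonically via the tangent space of $M$ at the common point, the argument is purely a matter of assembling surjective operators with compatible boundary data.
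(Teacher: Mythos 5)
Your overall architecture is sound and genuinely different from the paper's: you solve the system factor by factor with the matching conditions built in from the start (first $\xi$ with $\xi(\infty)=0$, then each $\xi_i$ with the affine condition $\xi_i(\infty)=\xi(x_i)$ reduced to a homogeneous vanishing condition), whereas the paper first solves $\bar d_w\xi=\eta$ and $\bar d_{u_i}\xi_i=\eta_i$ with no constraints at all and then corrects the solutions by adding elements of the kernels, using the snake lemma to show that evaluation at $\infty$ maps $\ker\bar d_w$ onto the whole fiber. The two routes are essentially dual (surjectivity of the restricted operator versus surjectivity of the full operator together with surjectivity of evaluation on its kernel), and yours is arguably the more economical bookkeeping; your identification of the fibers $(u_i^*T\p)_\infty\cong(w^*T\p)_{x_i}$ via $u_i(\infty)=w(x_i)$ is also exactly the point where the geometry of $S$ enters in the paper's argument.

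However, there is a genuine gap in how you justify the key surjectivity statements. Theorem 2.2 is a statement about holomorphic \emph{line} bundles, and a generalized $\bar d$-operator is by definition an operator on a line bundle. The bundles $w^*T\p$ and $u_i^*T\p$ are rank-two complex bundles, so the numerical criteria ``$c_1=3\geq -1$'' and ``$3\geq k-1$'' are not licensed by Theorem 2.2; automatic transversality is false for higher-rank bundles in general, since a rank-two bundle of total degree $3$ could a priori contain a line subbundle of degree $\leq -2$ on which the induced operator has cokernel. What saves your argument is exactly the structure the paper exploits: the short exact sequence $0\m T\s\m w^*T\p\m N\m 0$ with $c_1(T\s)=2$ and $c_1(N)=1$. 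To get surjectivity of $\bar d_w$ on sections vanishing at $\infty$, tensor this sequence with $\mathcal O(-\infty)$ (as set up in the paper's Section 2), observe that the twisted line bundles have Chern numbers $1$ and $0$, both $\geq -1$, apply Theorem 2.2 to $\bar d_\tau$ and $\n$ on these twisted bundles, and conclude by the Five Lemma; the same applies to each $\bar d_{u_i}$. With that repair, your proof goes through.
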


\begin{proof}

By section 4.1 of [S], we have the following commuting diagram of exact sequences:

$$
\begin{array}{lcccccccl}
0  &\m &\Gamma(T\s) &\m  & \Gamma(w^*T\p) & \m & \Gamma(N) & \m & 0                \\
\downarrow  & & \bar{d}_\tau \downarrow  &    & \bar{d_w} \downarrow &  & \n \downarrow &  & \downarrow              \\

0 & \m  &\Omega^{0,1}(T\s) &\m & \Omega^{0,1}(w^*T\p) & \m &\Omega^{0,1}(N) &\m & 0
\end{array}$$

We also have similar diagrams for the normal bundle $N_i$ of the curves $u_i$. Let $\eta \in \Omega^{0,1}(w^* T\p)$ and $\eta_i \in \Omega^{0,1}(u_i^* T\p)$ be arbitrary. We will show that $(\eta, \eta_1, . . . \eta_k)$ is in the image of $L_s$.

We will first check that $\bar d_w:\Gamma(w^*T\p)\m\Omega^{0,1}(w^*T\p)$ is surjective. Note that $c_1(T\s)=2 \geq -1$ and $c_1(N)=1\geq -1$. Hence Theorem 2.2 implies the surjectivity of $\bar d_{\tau}$ and $\n$. The Five Lemma implies that $\bar d_w$ is surjective. Likewise $\bar d_{u_i}:\Gamma(u_i^*T\p) \m \Omega^{0,1}(u_i^*T\p)$ is surjective for each $i$. Let $\xi \in \Gamma(w^*T\p)$ and  $\xi_i \in \Gamma(u_i^*T\p)$ be such that $\bar d_w \xi = \eta$ and $\bar d_{u_i} \xi_i =\eta_i$.

If $(\xi,\xi_1,$ ... $\xi_k)$ is in $V_s$ then $L_s(\xi,\xi_1,$ ... $\xi_k)=(\eta,\eta_1$... $\eta_k)$ and we have shown that $(\eta,\eta_1,$... $\eta_k)$ is in the image of $L_s$. However, for a collection of sections to be in $V_s$, they need to also satisfy matching conditions which $(\xi,\xi_1,$ ... $\xi_k)$ may or may not satisfy. We will modify $(\xi,\xi_1,$ ... $\xi_k)$ by adding holomorphic sections. This will not change the value of the anti-holomorphic derivatives but will establish the matching conditions. In this proof,  we will call a section holomorphic if it is in the kernel of the relevant $\bar d$ operator.

Let $(w^*T\p)_{\infty}$ denote the fiber of the vector bundle $w^*T\p$ over the point in $\s$ named $\infty$. Let $a^n$ be the image of $a$ in $N_{\infty}$, the fiber of the line bundle $N$ over the point $\infty$. Let $\mathcal O(-\infty)$ denote the sheaf of holomorphic sections of the trivial complex line bundle vanishing at $\infty \in \s$. The complex dimension of the space of holomorphic sections of $N$ is $c_1(N)+1=2$. On the other hand, the complex dimension of the space of holomorphic sections of $N$ vanishing at $\infty$ is $c_1(N \otimes \mathcal O(-\infty))+1=1$. Thus, we can find a holomorpic section of $N$ not vanishing at $\infty$. In other words, there exists $\sigma^n \in \Gamma(N)$ with $\n \sigma^n =0$ and $\sigma(\infty) \neq 0 \in N_{\infty}$. Since $N_{\infty}$ has complex dimension one, we can find a scalar $k \in \C$ such that $k \sigma^n(\infty)=a^n$.

Since coker $\bar d_{\tau}=0$, the Snake Lemma implies that there is a short exact sequence:

$$0 \m \ker \bar d_{\tau} \m \ker \bar d_{w} \m \ker \n \m 0$$
Thus we can find a holomorphic section $\sigma$ of $w^*T\p$ mapping to $k \sigma^n$ in $\Gamma(N)$. View $(T\s)_{\infty}$ as a subspaces of  $(w^*T\p)_{\infty}$. The quotient $w^*T\p_{\infty} / T\s_{\infty}$ is naturally isomorphic to $N_{\infty}$. Observe that the image of $\sigma(\infty)-a$  in $N_{\infty}$ is zero. Thus $\sigma(\infty)-a \in T\s_{\infty}$. Since the dimension of the space of holomorphic sections of $T\s$ is greater than the dimension of the space of holomorphic sections of $T\s  \otimes \mathcal O(-\infty)$, we can find $\sigma^t \in \ker \bar d_{\tau}$ such that $\sigma^t({\infty}) \neq 0$. Since $T\s_{\infty}$ is one complex dimensional, by scaling we can assume that $\sigma^t({\infty}) = a-\sigma(\infty)$. Let $\xi'=-\sigma^t-\sigma + \xi$. We have $\bar d_w \xi'=\eta$ and $\xi'(\infty)=0$.

The same argument can be used to show that we can find holomorphic sections $\sigma_i $ of $u_i^*T\p$ with $\sigma_i(\infty)+\xi_i(\infty)=\xi'(x_i)$. The equation $\sigma_i(\infty)+\xi_i(\infty)=\xi'(x_i)$ makes sense since $u_i(\infty)=w(x_i)$ and hence we can identify $(u_i^*T\p)_{\infty}$ with $(w^*T\p)_{x_i}$. Let $\xi'_i=\xi_i-\sigma_i$. The collection of sections $(\xi',\xi'_1,$ ... $\xi'_k)$ is in $V_s$ since it satisfies all of the matching conditions. Since we have only added holomorphic sections, we have not changed any of the anti-holomorphic derivatives. Hence $L_s(\xi',\xi'_1,$ ... $\xi'_k)=(\eta,\eta_1$... $\eta_k)$. We have now established that $L_s$ is surjective. 

\end{proof}

In order to construct a gluing map $\gamma^F: K \m Hol_{k+1}^*(\s,(\p,J))$, we first define an ``approximate'' gluing map $\gamma^A: K \m \Omega^2_{k+1} \p$ that is close in weighted Sobolev norms to a map to $Hol^*_{k+1}(\s,\p)$. Use the short hand notation $\vec{x}=(x_1,$... $x_k)$ and $\vec{u}=(u_1,$... $u_k)$. The map $\gamma^A(\vec x, \vec u)$ will be the function $w$ for $z$ outside of small balls around the points $x_i$. In even smaller balls around each $x_i$, the function $\gamma^A(\vec x, \vec u)$ will be a reparameterized version of the function $u_i$. In the annuli in between, we will use cut off functions and the exponential map to transition smoothly. The map $\gamma^A$ can be defined explicitly by formulas. 

Let $\delta$ and $R$ be fixed parameters with $\delta$ small and $R$ large compared with $1/\delta$. Also require that $2/(\delta R) \leq \epsilon$ and $\delta <1$. Let $\rho: \C \m [0,1]$ be a smooth function such that $\rho(z)=0$ if $|z| \leq 1$ and $\rho(z)=1$ if $|z| \geq 2$.  Using the compactness of the closure of $K$, we can globally bound the $L^{\infty}$ norm of the derivative of the $u_i$'s. Using this bound, McDuff and Salamon showed that we can take $R$ large enough and $\delta$ small enough to find functions $W_i:B_{3/(\delta R)}(0): \m T_{w(x_i)}\p$ and $U_i:{\s-B_{\delta R/3}(0)} \m T_{w(x_i)}\p$, satisfying the following equations:

$$\exp_{w(x_i)}(W_i(z))=w(z)$$
$$\exp_{w(x_i)}(U_i(z))=u_i(z)$$ Note that we only require the above equations to hold when $z$ is in the domain of $U_i$ or $W_i$ respectively. Using the above functions and constants, we can now define $\gamma^A$.

\begin{definition}
Let $\gamma^A: K \m \Omega^2_{k+1} \p$ be defined by the formula $\gamma^A(\vec x, \vec u)(z)=$
$$\begin{cases}
 w(z) & \text{if } |z-x_i| \geq 2/(\delta R) \text{ for all i} \\
 u(R^2 (z-x_i)) & \text{if } |z-x_i| \leq \delta/(2  R) \\
\exp_{w(x_i)}(p(\delta R (z-x_i))W_i(z)& \text{if } \delta/(2  R) \leq |z-x_i| \leq  2/(\delta R)
\\ +p(\delta/ (R(z-x_i))U_i(R^2(z-x_i))) 
\end{cases}
$$

\end{definition}

To summarize, $\gamma^A(\vec x, \vec u)(z)$=$w(z)$ for $z$ outside of disks around the $x_i$'s. However, for $z$ near $x_i$, $\gamma^A(\vec x, \vec u)$ is a reparameterized version of $u_i$. In annuli around the $x_i$, we use the cutoff function $\rho$ to smoothly transition between $w$ and reparameterized versions of the $u_i$'s.

For fixed $\vec x$ and $\vec u$, $\gamma^A(\vec x, \vec u)$ is not $J$-holomorphic in the annuli around the $x_i$. Thus, $\gamma^A$ is not a map into $Hol_{k+1}^*(\s,(\p,J))$.  However, by taking $\delta$ small and $R$ large, McDuff and Salamon show that $\bar d^{nl}(\gamma^A(\vec x, \vec u))$ can be made arbitrarily small in weighted $L^p$ norms, for some choices of $\rho$. Using the compactness of the closure of $K$, $\bar d^{nl}(\gamma^A(\vec x, \vec u))$ can be made small for all $(\vec x, \vec u) \in K$ at the same time. This allows one to use an implicit function theorem (Theorem 3.3.4 of [MS]) argument to ``correct'' $\gamma^A$ and build a map $\gamma^F:K \m Hol_{k+1}^*(\s,(\p,J))$.

\begin{theorem}
For some choices of the function $\rho$ and for $\delta$ sufficiently small and $R$ sufficiently large (depending on $\delta$), there is a section $\xi$ of $T C^{\infty}(\s,\p)$ with property that for each for each $(\vec x, \vec u) \in K$,  the map $z \m \exp_{\gamma^A(\vec x, \vec u)(z)}(\xi({\gamma^A(\vec x, \vec u)})(z))$ is $J$ holomorphic. 
\end{theorem}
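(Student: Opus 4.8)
The plan is to solve the equation $\bar d^{nl}=0$ near the approximate solution $\gamma^A(\vec x,\vec u)$ by a quantitative implicit function theorem, precisely Theorem 3.3.4 of [MS]. For a fixed configuration $(\vec x,\vec u)\in K$, write $\phi=\gamma^A(\vec x,\vec u)$ and seek a section $\xi$ of $\phi^*T\p$, small in a weighted Sobolev norm, so that the perturbed map $z\m \exp_{\phi(z)}(\xi(z))$ is $J$-holomorphic. Parallel transporting the resulting $(0,1)$-form back to $\Omega^{0,1}(\phi^*T\p)$ defines a smooth map $\mathcal F$ of Banach spaces with $\mathcal F(0)=\bar d^{nl}(\phi)$ and $D\mathcal F(0)=\bar d_\phi$. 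I would work on weighted Sobolev spaces adapted to the gluing necks, where the domain incorporates the constraint that sections vanish at $\infty$; this guarantees that the corrected map still sends $\infty$ to the base point, so that it lands in $Hol^*_{k+1}(\s,(\p,J))$ rather than merely $Hol_{k+1}$. The three hypotheses of the implicit function theorem are: (1) $\|\mathcal F(0)\|$ small, (2) a right inverse of $D\mathcal F(0)$ bounded uniformly over $K$, and (3) a quadratic estimate for $\mathcal F(\xi)-\mathcal F(0)-D\mathcal F(0)\xi$. Given all three with constants uniform over the compact closure of $K$, Newton iteration converges to a section $\xi(\phi)$ depending continuously on $(\vec x,\vec u)$, which assembles into the desired section of $TC^\infty(\s,\p)$ along the image of $\gamma^A$.

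Hypothesis (1) is already established in the discussion preceding the theorem: for suitable $\rho$, small $\delta$, and large $R$, McDuff and Salamon's estimates make $\bar d^{nl}(\gamma^A(\vec x,\vec u))$ arbitrarily small in the weighted $L^p$ norm, and by compactness of the closure of $K$ this is uniform over all configurations. Hypothesis (3) is the standard quadratic estimate coming from the $C^2$-bound on the exponential map of $\p$ together with the uniform $C^0$ and $C^1$ bounds on $\gamma^A$; it follows as in appendix A of [MS] and is again uniform over $K$ because all maps have image in the compact manifold $\p$ with uniformly controlled derivatives.

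The crux, and the step I expect to be the main obstacle, is hypothesis (2). Here Theorem 4.2 does the essential work. Away from the necks the preglued map $\phi$ coincides with $w$, and on the small disks it is a reparameterized $u_i$, so in the weighted norms the operator $\bar d_\phi$ is, up to an error supported on the necks, the direct sum $\bar d_w\oplus \bar d_{u_1}\oplus\cdots\oplus \bar d_{u_k}$ restricted to sections whose leading terms match at $\infty$ and at the $x_i$. That matched operator is exactly $L_s$, which Theorem 4.2 proves surjective; choosing a right inverse of $L_s$ for each $s$ and using compactness of the closure of $K$ furnishes a right inverse of the broken operator with a uniform norm bound. Following appendix A of [MS], I would splice these right inverses with the cutoff $\rho$ into an approximate right inverse $Q$ of $\bar d_\phi$, show that $\bar d_\phi Q-\mathrm{id}$ has operator norm $o(1)$ as $\delta\to 0$ and $R\to\infty$, and correct $Q$ to a genuine right inverse by a Neumann series. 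The delicate point is calibrating the weights so that the neck contributions and the errors introduced by differentiating the cutoff function are genuinely small, and so that every implied constant is uniform over $K$; once this is arranged, Theorem 3.3.4 of [MS] applies and produces $\xi$.
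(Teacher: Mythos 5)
Your proposal is correct and follows essentially the same route as the paper: the paper's proof is a one-line appeal to appendix A of [MS], with the surjectivity of $L_s$ from Theorem 4.2 supplying the transversality input, and your write-up simply expands that citation into the standard three hypotheses of Theorem 3.3.4 of [MS] (small error, uniformly bounded right inverse via splicing, quadratic estimate). The added detail, in particular your observation that the right inverse of the glued operator is obtained from a right inverse of $L_s$ by cutting off and correcting with a Neumann series, is exactly the content of the argument the paper delegates to [MS].
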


\begin{proof}
In appendix A of [MS], it is proven that the transversality condition proved in theorem $4.2$ implies the existence of such a section $\xi$ for large $R$ and small $\delta$.
\end{proof}

We can now define the gluing map $\gamma^F$.

\begin{definition}
Let $\gamma^F: K \m Hol_{k+1}^*(\s,(\p,J))$ be defined by the formula $\gamma^F(\vec x, \vec u)(z)=\exp_{\gamma^A(\vec x, \vec u)(z)}(\xi({\gamma^A(\vec x, \vec u)})(z))$.

\end{definition}

By theorem $4.3$, the above formula does define a $J$ holomorphic map for each pair $(\vec x, \vec u) \in K$. To see that the degree of $\gamma^A(\vec x, \vec u)$ and $\gamma^F(\vec x, \vec u)$ is $k+1$, see section 5 or [MS] appendix A.

\section{Homotopy between gluing maps}\label{conclusions}

In section $3$ we recalled the construction of a gluing map $\gamma: C_2(k) \times_{\Sigma_k}  (Hol_{1}^*(\s,\mathbb{P}^n))^k \m  Hol_{k}^*(\s,\mathbb{P}^n)$. This gluing map involves the juxtaposition of roots of polynomials. By the work of F. R. Cohen, R. Cohen, Mann, and Milgram, the image in $H_*(\Omega^2_k \mathbb{P}^n)$ of $\gamma \circ i$ is completely understood. In section 4, a different gluing map $\gamma^F: K \m Hol_{k+1}^k(\s,(\p,J))$ was constructed via analytical techniques. Recall that the space $K$ is a configuration space of points on $\s$ and intersecting $J$-holomorphic curves. The goal of this section is to relate the gluing map $\gamma^F$ to the well understood maps from section $3$. In this section we will consider four gluing maps, $\gamma^R$, $\gamma^L$, $\gamma^A$, and $\gamma^F$.

\begin{definition}

Let $\gamma^R: C_2(k) \times_{\Sigma_k} (Hol_{1}^*(\s,(\p,J_0)))^k \m  \Omega^2_{k+1} \p$ be the map defined by $i \circ s \circ \gamma$.

\end{definition}

See section 3 for definitions of $i,s$ and $\gamma$. The notation $R$ indicates that this map involves juxtaposition of roots of polynomials. Fix $v \in \Omega^2_1 \p$ and $\mu \in C_2(2)$.

\begin{definition}
Let $l:\Omega^2_k \p \m \Omega^2_{k+1} \p$ be given by $l(u)=\mu(v,u)$. 
\end{definition}

The map $l$ is analogous to the stablization map $s$ from section 3 except that $l$ is a map between continuous mapping spaces instead of holomorphic mapping spaces. Like $s$, $l$ depends on $\mu$ and $v$ but different choices yield homotopic functions. However, unlike the map $s$, $l$ is a homotopy equivalence. 

\begin{definition}

Let $\gamma^L: C_2(k) \times_{\Sigma_k} (Hol_{1}^*(\s,(\p,J_0)))^k \m  \Omega^2_{k+1} \p$ be the gluing map induced by $C_2$ operad action on $\Omega^2 \p$ post composed with $l$.

\end{definition}

The map $\gamma^L$ is similar to $\gamma^R$. However, the operad action used in $\gamma^L$ is the standard action on twofold loop spaces as opposed to the action on holomorphic mapping spaces involving roots of polynomials. 

Recall, in section 4 we constructed a set $K$ of points in $\C$, $\vec x =(x_1$... $x_k)$, and degree one holomorphic maps, $\vec u=(u_1,$... $u_k)$, with prescribed intersections with a fixed degree one map $w$. The space $K$ is homeomorphic to $F^\epsilon_k(B_1(0)) \times (S^3)^k$. Note that there are inclusions $F^\epsilon_k(B_1(0)) \hookrightarrow F_k(\C) \hookrightarrow C_2(k)$ and $S^3 \hookrightarrow Hol_1^*(\s,(\p,J))$ which induce homotopy equivalences. In section 4, we also constructed a map $\gamma^A: K \m  \Omega^2_{k+1} \p$ called the approximate gluing map and a map $\gamma^F: K \m {Hol_{k+1}(\s,(\p,J))}$. Observe that the symmetric group $\Sigma_k$ acts  diagonally on $K$ by permuting the indices of the points $x_i$'s and functions $u_i$'s.

\begin{lemma}
The maps $\gamma^A: K \m \Omega^2 \p$ and $\gamma^F : K \m Hol_{k+1}(\s,(\p,J))$ are $\Sigma_k$ invariant maps.
\end{lemma}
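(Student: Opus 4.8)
The plan is to verify the $\Sigma_k$ invariance of $\gamma^A$ directly from its explicit formula, and then to deduce the invariance of $\gamma^F$ formally from that of $\gamma^A$, using the fact that the correction section $\xi$ of Theorem 4.3 is a function of the underlying map alone.

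First I would treat $\gamma^A$. A permutation $\sigma \in \Sigma_k$ carries $(\vec x, \vec u) = (x_1, \ldots, x_k, u_1, \ldots, u_k)$ to $(x_{\sigma(1)}, \ldots, x_{\sigma(k)}, u_{\sigma(1)}, \ldots, u_{\sigma(k)})$. The geometric input that makes the invariance transparent is the separation condition built into $K$: the points $x_i$ are $\epsilon$-separated with $2/(\delta R) \le \epsilon$, so the disks $\{|z-x_i| \le 2/(\delta R)\}$ about distinct points do not overlap. Consequently, for each $z \in \s$ at most one of the three nontrivial cases in the definition of $\gamma^A$ applies, and the case that applies, together with the value it assigns, depends only on the single pair $(x_i, u_i)$ for the unique index $i$ (if any) with $z$ in the corresponding disk. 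Thus $\gamma^A(\vec x, \vec u)(z)$ is determined by the unordered collection $\{(x_i, u_i)\}$ rather than by its ordering, and reindexing by $\sigma$ leaves the resulting map unchanged. This gives $\gamma^A(\sigma \cdot (\vec x, \vec u)) = \gamma^A(\vec x, \vec u)$.

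Next I would handle $\gamma^F$. By definition, $\gamma^F(\vec x, \vec u)(z) = \exp_{\gamma^A(\vec x, \vec u)(z)}(\xi(\gamma^A(\vec x, \vec u))(z))$, where $\xi$ is the section of $TC^\infty(\s,\p)$ furnished by Theorem 4.3. The essential point is that $\xi$ assigns to each map $v$ a section $\xi(v) \in \Gamma(v^* T\p)$ with no further reference to $(\vec x, \vec u)$; hence the rule $v \mapsto (z \mapsto \exp_{v(z)}(\xi(v)(z)))$ is a fixed operation on mapping spaces, and $\gamma^F$ is precisely this operation applied to $\gamma^A(\vec x, \vec u)$. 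Since $\gamma^A$ is $\Sigma_k$ invariant by the previous step and this correction operation depends only on its input map, $\gamma^F(\sigma \cdot (\vec x, \vec u)) = \gamma^F(\vec x, \vec u)$ follows immediately.

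I expect the only point requiring care to be the first step: one must confirm that the three cases defining $\gamma^A$ genuinely partition $\s$ and that the interpolation annulus attached to one index never meets that of another, which is exactly what the separation condition on $K$ guarantees. Once disjointness is in hand, the symmetry of $\gamma^A$ is manifest because every local formula uses only the data of its own index, and the invariance of $\gamma^F$ then costs nothing further, as $\gamma^A$ is its only configuration-dependent ingredient.
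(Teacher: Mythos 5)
Your proposal is correct and follows essentially the same route as the paper: the paper likewise observes that the explicit case-by-case formula for $\gamma^A$ depends only on the unordered collection of pairs $(x_i,u_i)$, and that $\gamma^F$ is determined by the value of $\gamma^A$ (via the correction section $\xi$), hence inherits the invariance. Your version merely spells out the two points the paper leaves implicit, namely the disjointness of the gluing regions and the fact that $\xi$ is a function of the underlying map alone.
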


\begin{proof}
The action of $\Sigma_k$ does not change what function is glued in at what point. Instead $\Sigma_k$ relabels the configuration and the holomorphic functions. Examining the explicit formula for $\gamma^A$ in the previous section, we see that $\gamma^A$ is $\Sigma_k$ invariant. The map $\gamma^F$ is determined by the values of $\gamma^A$ so it is also $\Sigma_k$ invariant.
\end{proof}

Thus, we can view $\gamma^A$  as a map from $K/\Sigma_k$ to $\Omega^2_{k+1} \p$ and $\gamma^F$ as a map from $K/\Sigma_k$ to $Hol_{k+1}^*(\s,(\p,J))$. We will abuse notation and mean the induced maps on $K/\Sigma_k$ whenever we write $\gamma^A$ or $\gamma^F$ from now on.

\begin{theorem} The maps $\gamma^A$ and $i \circ \gamma^F$ are homotopic as maps from $K/{\Sigma_k}$ to $\Omega^2_{k+1} \p$.
\end{theorem}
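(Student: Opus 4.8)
The plan is to write down an explicit homotopy obtained by linearly scaling the correction section $\xi$ furnished by Theorem 4.3. Recall that $\gamma^F$ is by definition the exponential of $\xi$ off of $\gamma^A$, while $\gamma^A$ is the exponential of the zero section. This suggests interpolating along the geodesic segment joining the two. Concretely, for $t\in[0,1]$ I would set
$$H_t(\vec x, \vec u)(z) = \exp_{\gamma^A(\vec x, \vec u)(z)}\big(t\cdot \xi(\gamma^A(\vec x, \vec u))(z)\big).$$
Since $\exp_p(0)=p$ we get $H_0=\gamma^A$, and by the defining formula for $\gamma^F$ we get $H_1 = i\circ\gamma^F$. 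So the whole content of the theorem is that $H$ is a well-defined, continuous, $\Sigma_k$-invariant family landing in the correct component $\Omega^2_{k+1}\p$.

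First I would check that $H$ is well defined and continuous. Because $\p$ is compact, the Riemannian metric $g$ associated to $\omega$ and $J$ is complete, so $\exp$ is globally defined and $H_t(\vec x,\vec u)(z)$ makes sense for every $t$. Continuity of $H$ in $(t,\vec x,\vec u,z)$ follows from continuity of $\exp$, of $\gamma^A$, and of the section $\xi$ of $TC^\infty(\s,\p)$, whose continuous dependence on its base map is part of the implicit function theorem conclusion of Theorem 4.3. For descent to $K/\Sigma_k$, note that $\gamma^A$ is $\Sigma_k$-invariant by Lemma 5.1, so $\gamma^A(\sigma\cdot(\vec x,\vec u)) = \gamma^A(\vec x,\vec u)$; since $\xi$ is evaluated only on the underlying map $\gamma^A(\vec x,\vec u)$, each $H_t$ is $\Sigma_k$-invariant and factors through $K/\Sigma_k$.

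Next I would verify the two conditions placing $H_t$ in $\Omega^2_{k+1}\p$. For the base point, both endpoints are based at $m_0$: outside small balls around the $x_i$ we have $\gamma^A(\vec x,\vec u)=w$, so $\gamma^A(\vec x,\vec u)(\infty)=w(\infty)=m_0$, and $\gamma^F(\vec x,\vec u)\in Hol^*_{k+1}(\s,(\p,J))$ gives $\gamma^F(\vec x,\vec u)(\infty)=m_0$ as well. Comparing with $\gamma^F(\vec x,\vec u)(\infty)=\exp_{m_0}(\xi(\gamma^A(\vec x,\vec u))(\infty))$ and using that the correction is small forces $\xi(\gamma^A(\vec x,\vec u))(\infty)=0$, whence $H_t(\vec x,\vec u)(\infty)=\exp_{m_0}(0)=m_0$ for all $t$. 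For the degree, observe that for each fixed $(\vec x,\vec u)$ the assignment $t\mapsto H_t(\vec x,\vec u)$ is a continuous path in $\Omega^2\p=\bigsqcup_m \Omega^2_m\p$; since degree is locally constant and $H_0(\vec x,\vec u)=\gamma^A(\vec x,\vec u)$ has degree $k+1$, every $H_t(\vec x,\vec u)$ has degree $k+1$.

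The only step requiring genuine care is the input that $\xi$ vanishes at $\infty$ and varies continuously with its base map; both are built into the weighted-Sobolev implicit function theorem argument of McDuff and Salamon recalled in Theorem 4.3, where the based condition is encoded in the functional-analytic completions and continuous dependence is part of the statement. Granting these, $H$ is the desired homotopy $K/\Sigma_k\times[0,1]\to\Omega^2_{k+1}\p$ from $\gamma^A$ to $i\circ\gamma^F$, and the theorem follows.
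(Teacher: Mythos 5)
Your proposal is correct and is essentially identical to the paper's own proof: the paper uses exactly the homotopy $\gamma_t(\vec x,\vec u)(z)=\exp_{\gamma^A(\vec x,\vec u)(z)}(t\,\xi(\gamma^A(\vec x,\vec u))(z))$, noting $\exp_x(0)=x$ gives $\gamma_0=\gamma^A$ and the defining formula for $\gamma^F$ gives $\gamma_1=i\circ\gamma^F$. Your additional checks of continuity, $\Sigma_k$-invariance, base point, and degree are reasonable elaborations of details the paper leaves implicit.
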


\begin{proof}

Let $\gamma_t:K/{\Sigma_k \m \Omega^2\p}$ be the map given by the formula $\gamma_t(\vec x, \vec u)(z)=\exp_{\gamma^A(\vec x, \vec u)(z)}(t\xi({\gamma^A(\vec x, \vec u)(z)})(z))$ for $(\vec x, \vec u) \in K/\Sigma_k$. Here $\xi$ is the family of vector fields introduced in Theorem 4.3. When $t=1$, $\gamma_1=\gamma^F$ since $\gamma^F$ was defined by the formula $\gamma^F(\vec x, \vec u)=\exp_{\gamma^A(\vec x, \vec u)(z)}(\xi({\gamma^A(\vec x, \vec u))(z)}(z))$. Since for any point $x$ in a manifold, $\exp_x(0)=x$, we have $\gamma_0=\gamma^A$. Thus $\gamma_t$ gives a homotopy between $i \circ \gamma^F$ and $\gamma^A$.

\end{proof}

\begin{theorem} There is a homotopy equivalence $g:K/\Sigma_k \m C_2(k) \times_{\Sigma_k} (Hol_{1}^*(\s,(\p,J_0)))^k $ such that $\gamma^A : K/\Sigma_k \m \Omega^2_{k+1} \p$ is homotopic to $\gamma^L \circ g:K / \Sigma_k \m \Omega^2_{k+1} \p$.

\end{theorem}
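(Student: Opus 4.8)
The plan is to treat the statement in two stages: construct $g$ and verify it is an equivalence, then produce the homotopy $\gamma^A\simeq\gamma^L\circ g$. For the first stage I would define $g$ on the product model $K\cong F_k^\epsilon(B_1(0))\times(S^3)^k$ fixed in Section 4 as the product of the two inclusions recorded at the start of this section, $F_k^\epsilon(B_1(0))\hookrightarrow C_2(k)$ and $S^3\hookrightarrow Hol_1^*(\s,(\p,J_0))$ (the target using $J_0$, reached from the $J$-model via the diffeomorphism $Hol_1^*(\s,(\p,J))\cong Hol_1^*(\s,(\p,J_0))$). Both inclusions are homotopy equivalences and are $\Sigma_k$-equivariant — the first because permuting configuration points permutes little disks, the second because it permutes factors — so their product descends to $g:K/\Sigma_k\m C_2(k)\times_{\Sigma_k}(Hol_1^*(\s,(\p,J_0)))^k$. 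Since the diagonal $\Sigma_k$-actions on source and target are free (being free already on the configuration factor), an equivariant homotopy equivalence of free $\Sigma_k$-spaces descends to a homotopy equivalence of quotients, so $g$ is a homotopy equivalence.

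For the second stage I would first write both maps explicitly as elements of $\Omega^2_{k+1}\p$. Unwinding the little-disks action together with the definition of $l$ — and using operad associativity to compose $\mu\in C_2(2)$ with the configuration $\iota(\vec x)\in C_2(k)$ — the map $\gamma^L\circ g(\vec x,\vec u)$ is the genuine $(k+1)$-ary action on the tuple $(v,u_1,\dots,u_k)$: it is the constant basepoint $m_0$ outside $k+1$ disjoint little disks, a reparameterized $v$ in the disk $\mu_1$, and reparameterized $u_i$'s in the $k$ nested disks. Comparing with the case-by-case formula for $\gamma^A$, the map $\gamma^A(\vec x,\vec u)$ is exactly such an insertion of the $u_i$ into little disks at the $x_i$, with the single difference that its value away from those disks is the nonconstant degree one map $w$ rather than $m_0$. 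Because $v$ is an arbitrary fixed element of $\Omega^2_1\p$ and distinct choices give homotopic $l$, I would take $v=i(w)$; moreover the isotopy through based continuous maps realizing $Hol_1^*(\s,(\p,J))\cong Hol_1^*(\s,(\p,J_0))$ makes the $J$- and $J_0$-versions of each degree one lump interchangeable inside $\Omega^2\p$. After these choices the two maps differ only in that $\gamma^A$ carries $w$ spread over the complement of the small disks while $\gamma^L\circ g$ carries $w$ concentrated in the single disk $\mu_1$ against a basepoint background.

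The homotopy then has two ingredients. An isotopy of disjoint little disks of $\s$ carries the configuration at the $x_i$ to the nested configuration $\mu_2\circ\iota(\vec x)$; since the space of such configurations is connected and the insertion construction is continuous in the disks, this moves the $u_i$-lumps of $\gamma^A$ into the positions used by $\gamma^L\circ g$ while leaving the background unchanged. The essential ingredient is then to scan the spread-out background $w$ into the remaining disk $\mu_1$, replacing it by $m_0$ elsewhere: this is the assertion that a based map is homotopic to its $1$-ary little-disks insertion, valid because $C_2(1)$ is contractible with unit acting as the identity, so a path in $C_2(1)$ from the unit to the embedding $\mu_1$ yields a homotopy $w\simeq\mathrm{scan}_{\mu_1}(w)$. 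Carrying out this scanning compatibly with the already-placed $u_i$-lumps and then reading off the resulting map as $\gamma^L\circ g(\vec x,\vec u)$ completes the homotopy, and all steps are natural in $(\vec x,\vec u)$ and $\Sigma_k$-invariant, so they descend to $K/\Sigma_k$.

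The main obstacle I expect is precisely the scanning of the background. Because $w$ is genuinely of degree one it cannot be erased from the complement of the small disks without accounting for its degree, and near each small disk it takes the value $w(x_i)\neq m_0$, so a naive localized contraction clashes with the interface values forced by the $u_i$-insertions. The careful point will be to organize the scanning of $w$ and the repositioning of the $u_i$-lumps as a single homotopy through maps of the ``background-plus-insertions'' form, using the contractibility of $C_2(1)$ together with the $\epsilon$-separation and smallness of the disks fixed in Section 4 to guarantee that the shrinking support of $w$ never collides with the $u_i$-disks and that all interface values, and the total degree $k+1$, remain consistent throughout.
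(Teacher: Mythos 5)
Your overall architecture matches the paper's: build $g$ from the product model $K\cong F_k^\epsilon(B_1(0))\times (S^3)^k$ and the standard inclusions into $C_2(k)$ and $Hol_1^*(\s,(\p,J_0))$, then homotope $\gamma^A$ to a clean ``insertion into little disks over a background'' map and deform the background $w$ to a representative concentrated in one extra disk against a constant basepoint. You also correctly identify the crux: the interface values. But your plan for resolving that crux has a genuine gap. The elements of $K$ satisfy $u_i(\infty)=w(x_i)\neq m_0$, so the $u_i$-lumps in $\gamma^A$ are anchored at the points $w(x_i)\in\p$, not at the basepoint. Any homotopy that moves the background from $w$ to a map $\hat w$ that is constant near the $x_i$ necessarily changes these anchor values from $w(x_i)$ to $m_0$, and therefore must simultaneously move each $u_i$ through the space of degree one maps with a prescribed (moving) value at $\infty$. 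Contractibility of $C_2(1)$, disjointness of supports, and degree bookkeeping --- the tools you name --- cannot produce this: they operate on the domain $\s$, whereas the obstruction lives in the target, in the constraint $u_i(\infty)=w_t(x_i)$. The mechanism the paper uses (and which your sketch omits) is that $ev:Map_1(\s,\p)\m\p$ is a fibration and $ev:Hol_1(\s,(\p,J))\m\p$ is a fiber bundle (Theorem 2.17, proved in Section 2 for exactly this purpose). The paper packages the ``background-plus-insertions'' data as a pullback $P(v)$ of $ev$ along the map $\vec x\mapsto (v(x_1),\dots,v(x_k))$, so that a path from $w$ to $\hat w$ induces, by homotopy lifting, a comparison $P(w)\simeq P(\hat w)$ under which the insertion maps $F_w$ and $F_{\hat w}$ agree up to homotopy; the map $g$ is then the composite of this comparison with $g'\times id$. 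Without this input your ``single homotopy through maps of the background-plus-insertions form'' is not defined, because the space of admissible tuples $(\vec x,\vec u)$ itself varies with the background.

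A secondary, smaller inaccuracy: $\gamma^A(\vec x,\vec u)$ is not ``exactly an insertion of the $u_i$ into little disks.'' On the annuli $\delta/(2R)\leq |z-x_i|\leq 2/(\delta R)$ it is an exponential-map interpolation built from the cutoff $\rho$ and the local lifts $W_i$, $U_i$, equal to neither $w$ nor a reparameterized $u_i$. The paper's Step 1 spends an explicit homotopy (collapsing the annuli to the constant value $w(x_i)=u_i(\infty)$ and then shrinking them to circles) to remove this before any comparison with $\gamma^L$ can begin; your proof would need the same preliminary step.
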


\begin{proof} 

We will construct this homotopy in two steps. In step 1, we will homotope $\gamma^A$ to a function $\gamma^2$ which does not involve the bump function $\rho$. In step 2, we will extend $\gamma^2$ to a function defined on a continous mapping space, construct the homotopy equivalence $g:K/\Sigma_k \m C_2(k) \times_{\Sigma_k} (Hol_{1}^*(\s,(\p,J_0)))^k$ and prove that $\gamma^2$ is homotopic to $\gamma^L \circ g$.
$$ $$

Step 1:

The purpose of this step is to homotope $\gamma^A:K/\Sigma_k \m \Omega^2 \p$ to a map  $\gamma^2:K/\Sigma_k \m \Omega^2 \p$ that does not involve the bump function $\rho$ or the exponential map. For fixed $(\vec x, \vec u) \in K$, the function $\gamma^A(\vec x, \vec u)$ is a reparameterized version of $u_i$ in a disk of radius $\delta/(2R)$ around each $x_i$, $w$ outside of balls of radius $2/(\delta R)$ around the $x_i$'s, and a function defined using the bump function $\rho$ to smoothly transition in the annuli around $x_i$ of radii $\delta/(2R)$ and $2/(\delta R)$. We will first homotope $\gamma^A$ to a map $\gamma^1$ which is constant taking the value $u_i(\infty)=w(x_i)$ in the annuls of radii $\delta/(2R)$ and $2/(\delta R)$ around $x_i$. This function will not involve the bump function $\rho$. Then we will shrink the annuli to circles yielding the function $\gamma^2$ which is a reparameterized version of $w$ outside balls of radius $1/R$ around the $x_i$, and reparameterized versions of $u_i$ inside the balls of radius $x_i$ and constant on the circles of radius $1/R$ around the points $x_i$. We will describe the homotopy between $\gamma^A$ and $\gamma^1$ with explicit formulas. 

There are three fixed parameters that we used in section 4 to build $\gamma^A$: $\epsilon$, $R$, and $\delta$. The constant $\epsilon$ is the minimum distance allowed between the $x_i's$. Hence we required that $2/(\delta R) \leq \epsilon$. We also fixed a function $\rho: \C \m [0,1]$ which is $0$ for $|z| \leq 1$ and $1$ for $|z| \geq 2$. In section 4, we defined $\gamma^A$ by the formula:
$$\gamma^A(\vec x, \vec u)(z)=$$
$$\begin{cases}
 w(z) & \text{if } |z-x_i| \geq 2/(\delta R) \text{ for all i} \\
 u(R^2 (z-x_i)) & \text{if } |z-x_i| \leq \delta/(2  R) \\
\exp_{w(x_i)}(p(\delta R (z-x_i))W_i(z)& \text{if } \delta/(2  R) \leq |z-x_i| \leq  2/(\delta R)
\\ +p(\delta/ (R(z-x_i))U_i(R^2(z-x_i))) 
\end{cases}
$$
Recall $W_i$ and $U_i$ were defined in section 4 so that $\exp_{w(x_i)}(W_i(z))=w(z)$ and $\exp_{w(x_i)}(U_i(z))=u_i(z)$.

The map $\gamma^2$ which is the goal of this step is defined as follows. Fix a diffeomorphism $f: B_{1/R}(0) \m \C$. Fix a family of diffeomorphism $f_{\vec x}: \C - \bigcup B_{1/R}(x_i) \m \C - \bigcup \{x_i\}$ that depends continuously on $\vec x = (x_1, . . . x_k) \in F_{\epsilon}(\C)$. Define $\gamma^2$ by the formula:
$$\gamma^2(\vec x, \vec u)(z)=$$
$$\begin{cases}
u_i(f(z-x_i)) & \text{ if } |z-x_i| \leq 1/R \\
w(f_{\vec x}(z)) & \text{otherwise}
\end{cases}
$$

In order to homotope $\gamma^A$ to the function $\gamma^1$, described above, we need to make it constant in the annuli around the points $x_i$. This will involve composing the functions $u_i$'s with a function so that the composition sends an entire neighborhood of $\infty$ to the base point. Likewise, we compose $w$ with a function such that the composition is constant in neighborhoods of the $x_i$'s. More explicitly, we will use the following formula for our homotopy. Let $h^t: B_{1/R}(0) \m \s$ be a family of maps depending continuously on $t\in[0,1]$ with the following properties:  $h^0(z)=R^2z$, $|h^t(z)| \geq R \delta /2$ for $z$ with $|z| \geq \delta/(2R)$, and $h^1(z)=\infty$ for $z$ with $|z| \geq \delta/(2R)$. Also fix a family of maps $h^t_{\vec x}: \C - \bigcup B_{1/R}(x_i) \m \s$ that depends continuously on $\vec x \in F_{\epsilon}(\C)$ and $t\in[0,1]$ with the following properties: $h^0_{\vec x}(z)=z$, $h^t_{\vec x}(z)$ is in the annulus $1/R \leq |z-x_i| \leq 2/(\delta R)$ if $z$ is also in that annulus, and $h^1_{\vec x}(z)=x_i$ if $z$ is in the annulus $1/R \leq |z-x_i| \leq 2/(\delta R)$. Let $\gamma^t$ be defined by the formula:

$$\gamma^t(\vec x, \vec u)(z)=$$
$$\begin{cases}
 w(h^t_{\vec x}(z)) & \text{if } |z-x_i| \geq 2/(\delta R) \text{ for all i} \\
 u(h^t(z-x_i)) & \text{if } |z-x_i| \leq \delta/(2  R) \\
\exp_{w(x_i)}(p(\delta R (z-x_i))W_i(h^t_{\vec x}(z)) & \text{if } \delta/(2  R) \leq |z-x_i| \leq  2/(\delta R)
\\ +p(\delta/ (R(z-x_i))U_i(h^t(z-x_i))) 
\end{cases}
$$

Note $\gamma^0=\gamma^A$. The map $\gamma^1$ matches our description of $\gamma^1$ from above since it is constant in the annuli $\delta/(2  R) \leq |z-x_i| \leq  2/(\delta R)$. To see this, note that for $z$ in $\delta/(2  R) \leq |z-x_i| \leq  2/(\delta R)$, $h^1(z-x_i)=h^1_{\vec x}(z)=w(x_i)=u_i(\infty)$. Since $U_i(u_i(\infty))=0\in T_{u_i(\infty)}\p$ and $W_i(w(x_i))=0 \in T_{w(x_i)} \p$, for $z$ with $\delta/(2  R) \leq |z-x_i| \leq  2/(\delta R)$, we have: $$\gamma^1(\vec x, \vec u)(z)=$$ $$\exp_{w(x_i)}(p(\delta R (z-x_i))W_i(h^t_{\vec x}(z)+p(\delta/ (R(z-x_i))U_i(h^t(z-x_i)))=$$
$$\exp_{w(x_i)}(p(\delta R (z-x_i))0+p(\delta/ (R(z-x_i))0)=\exp_{w(x_i)}(0)=w(x_i)$$
Hence the map $\gamma^1(\vec x, \vec u)$ is a reparameterized version of $u_i$ inside each annulus, and a reparameterized version of $w$ outside the annuli and constant in the annuli. By shrinking the annuli down to circles, we can homotope $\gamma^1$ to the map $\gamma^2$ described above. Thus $\gamma^A$ is homotopic to $\gamma^2$.

$$ $$

Step 2:

The map $\gamma^2$ can be extended to a space of continuous maps satisfying the intersection conditions that functions in $K$ satisfy. To define this continuous mapping space, consider the following diagram:

$$
\begin{array}{ccccccccl}
  &    & SP_k(Map_1(\s,\p)               \\
    &    &ev \downarrow             \\

F_k^{\epsilon}(B_1(0))/\Sigma_k   &\overset{v'}{\longrightarrow} &  SP_k(\p)  

\end{array}$$

The map $v'$ is defined as follows. For $v:\s \m \p$ with $v(\infty)=[1:1:1]=w(\infty)$, let $v':F_k^{\epsilon}(B_1(0))/\Sigma_k \m SP_k(\p) $ be the restriction of the induced map $SP_k(\s) \m SP_k(\p)$. Let $ev$ be the evaluation map at $\infty \in \s$. Let $P(v)$ denote the pullback of $SP_k(\s,\p) \m SP_k(\p)$ under the map $v'$. There is a map $F_v:P(v) \m \Omega_{k+\deg(v)}^2 \p$ defined as follows. For $(\vec x, \vec u) \in P(v)$, let $F_v(\vec x, \vec u)=$

$$\begin{cases}
u_i(f(z-x_i)) & \text{ if } |z-x_i| \leq 1/R \\
v(f_{\vec x}(z)) & \text{otherwise.}
\end{cases}
$$
Here the maps $f$ and $f_{\vec x}$ are the same as those used in the definition of $\gamma^2$ from step 1. Let $P^J(v)$ be space defined in the same way as $P(v)$ except replacing $Map_1(\s,\p)$ with $Hol_1(\s,(\p,J))$. There is an inclusion map $i:P^J(v) \m P(v)$. The space $P^J(w)$ is the same as the space $S/{\Sigma_g}$ where $S$ is defined in section 4. Recall that the inclusion $K/\Sigma^k \hookrightarrow S/\Sigma_g$ is a homotopy equivalence. The inclusion map $K \hookrightarrow P^J(w)  \hookrightarrow P(w)$ followed by $F_v: P(w) \m  \Omega_{k+1}^2 \p$ is the map $\gamma^2:K  \m  \Omega_{k+1}^2 \p$ defined in step 1.

Now we will relate the function $\gamma^L$ to this construction involving pullbacks. Let $\hat w$ be a degree one continuous map with  $\hat{w}(z)=[1:1:1]$ for $z=\infty$ and $z \in B_1(0)$. The space $P(\hat w)$ is the space $(\Omega_1^2 \p)^k \times_{\Sigma_g} F_k^{\epsilon}(B_1(0))$ and $P^{J_0}(\hat w)$ is the space $F_k^{\epsilon}(B_1(0)) \times_{\Sigma_k} Hol_{1}^*(\s,(\p,J_0))^k$ since $\hat w'$ is constant. Let $g':F_k^{\epsilon}(B_1(0)) \m C_2(k)$ be the map that sends a collection of points to the collections of disks with centers around those points of radius $1/R$. The map $g'$ is a homotopy equivalence. The following diagram homotopy commutes: 
$$
\begin{array}{ccccccccl}
P^{J_0}(\hat w) & \overset{g' \times id}{\longrightarrow}  &  C_2(k) \times_{\Sigma_k} (Hol_{1}^*(\s,(\p,J_0)))^k              \\
i \downarrow  &    &\gamma^L \downarrow             \\
P(\hat w) &\overset{F_{\hat w}}{\longrightarrow} & \Omega^2_{k+1} \p
\end{array}$$
To prove this, note that for some choices of loop sum map and maps $f_{\vec x}$ and $f$, the above diagram commute on the nose.

From now on, assume $v$ is degree 1. Since $ev: Map_1(\s,\p) \m \p$ is a fibration, the homotopy type of $P(v)$ is independent of $v$. Since  $ev: Hol_1(\s,(\p,J)) \m \p$ is a fiber bundle, the homeomorphism type of $P^J(v)$ is independent of $v$. Additionally if $g'':P(w) \m P(\hat w)$ is a homotopy equivalence induced from a path between $w$ and $\hat w$, the following diagram homotopy commutes:
$$
\begin{array}{ccccccccl}
P(\hat w) & \overset{F_{\hat w}}{\longrightarrow}  & \Omega^2_{k+1}  \p           \\
g'' \downarrow  &    &id \downarrow             \\
P( w) &\overset{F_{ w}}{\longrightarrow} & \Omega^2_{k+1} \p
\end{array}$$
By the arguments used in section 2 to prove that the bundle isomorphism type of $ev:Hol_1(\s,(\p,J)) \m \p$ is independent of $J$, we see that there is a homeomorphism $g''': P^{J}(w) \m P^{J_0}(\hat w)$ making the following diagram homotopy commute:
$$
\begin{array}{ccccccccl}
P^{J}(w) & \overset{F_{w}}{\longrightarrow}  & \Omega^2_{k+1}  \p           \\
g''' \downarrow  &    &id \downarrow             \\
P^{J_0}( \hat w) &\overset{F_{ \hat w}}{\longrightarrow} & \Omega^2_{k+1} \p
\end{array}$$

Let $g:K/\Sigma \m C_2(k) \times_{\Sigma_k} Hol_1^*(\s,(\p,J_0))^k$ be defined by the compositions of the following maps: $$K/\Sigma \hookrightarrow S/\Sigma_k=P^J(w)$$ $$g''': P^J(w) \m P^{J_0}(\hat w)= F^{\epsilon}_k(B_1(0)) \times_{\Sigma_k} Hol_1^*(\s,(\p,J_0))^k$$ $$g' \times id:  F^{\epsilon}_k(B_1(0)) \times_{\Sigma_k} Hol_1^*(\s,(\p,J_0))^k \m  C_2(k) \times_{\Sigma_k} Hol_1^*(\s,(\p,J_0))^k$$ Assembling the information contained in the above homtotopy commuting diagrams yields the fact that $\gamma^2 : K/\Sigma_k \m \Omega^2_{k+1} \p$ is homotopic to $\gamma^L \circ g:K / \Sigma_k \m \Omega^2_{k+1} \p$. By step 1, $\gamma^A$ is also homotopic to $\gamma^L \circ g$.

\end{proof}

\begin{theorem} The maps $\gamma^R:C_2(k) \times_{\Sigma_k} (Hol_{1}^*(\s,(\p,J_0)))^k \m  \Omega^2_{k+1} \p$ and $\gamma^L:C_2(k) \times_{\Sigma_k} (Hol_{1}^*(\s,(\p,J_0)))^k \m  \Omega^2_{k+1} \p$ are homotopic.
\end{theorem}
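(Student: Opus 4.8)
The plan is to recognize both $\gamma^R$ and $\gamma^L$ as a single $(k{+}1)$-fold operad gluing of the degree-one maps $v,u_1,\dots,u_k$ --- carried out holomorphically and then included into the loop space in the case of $\gamma^R$, and included first and then glued in $\Omega^2\p$ in the case of $\gamma^L$ --- and then to slide the inclusion $i$ across the operad action by invoking Theorem 3.3.

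First I would use associativity of the operad action to collapse the composites. Recall $\gamma^R=i\circ s\circ\gamma$, that $s(u)=\mu(v,u)$ for a fixed $\mu\in C_2(2)$ and $v\in Hol_1^*(\s,(\p,J_0))$, and that $\gamma$ is the holomorphic action of $C_2(k)$. Writing $\mu\circ_2 c\in C_2(k{+}1)$ for the operadic composite obtained by inserting $c$ into the second slot of $\mu$, associativity of the $C_2$-action on $Hol^*(\s,(\p,J_0))$ gives
$$s\circ\gamma(c;u_1,\dots,u_k)=\mu\big(v,\,c\cdot(u_1,\dots,u_k)\big)=(\mu\circ_2 c)\cdot(v,u_1,\dots,u_k),$$
the right-hand action being the holomorphic action on $k{+}1$ inputs. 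The same computation in $\Omega^2\p$, together with the choice of the loop-space map in $l$ equal to $i(v)$ (permissible since the relevant spaces are connected and different choices of $v$ and $\mu$ yield homotopic $s,l$), shows
$$\gamma^L(c;u_1,\dots,u_k)=(\mu\circ_2 c)\cdot\big(i(v),i(u_1),\dots,i(u_k)\big),$$
now with the standard action on the twofold loop space. The assignment $c\mapsto\mu\circ_2 c$ is a continuous map $C_2(k)\m C_2(k{+}1)$, equivariant for the inclusion $\Sigma_k\hookrightarrow\Sigma_{k+1}$ as the stabilizer of the first slot, so both formulas descend to the Borel quotient $C_2(k)\times_{\Sigma_k}(Hol_1^*(\s,(\p,J_0)))^k$.

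Next I would apply Theorem 3.3 with $p=k{+}1$ and the partition $1+\dots+1=k{+}1$. That theorem supplies a homotopy, parametrized over $C_2(k{+}1)$, between $i$ composed with the holomorphic gluing and the loop-space gluing composed with $i$ on each input. Precomposing this homotopy with the map $\Phi(c;u_1,\dots,u_k)=(\mu\circ_2 c;\,v,u_1,\dots,u_k)$ from the first step transports it into a homotopy between $\gamma^R$ and $\gamma^L$ at the level of ordered factors.

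The main obstacle is descent to the symmetric-group quotient: Theorem 3.3 is stated for ordered factors, whereas $\gamma^R$ and $\gamma^L$ live on the Borel construction $C_2(k)\times_{\Sigma_k}(\cdots)^k$. I would address this by observing that F.~R. Cohen's action on $Hol^*(\s,(\p,J_0))$ and the action on $\Omega^2\p$ are genuine (hence $\Sigma$-equivariant) $C_2$-actions and that $i$ is strictly equivariant, so the morphism-over-$C_2$ homotopy of Theorem 3.3 may be taken $\Sigma_{k+1}$-equivariant. Restricting this equivariant homotopy along the $\Sigma_k$-equivariant map $\Phi$ and passing to $\Sigma_k$-quotients then yields the homotopy $\gamma^R\simeq\gamma^L$ on the Borel construction. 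Verifying that the homotopy of Theorem 3.3 is indeed available equivariantly is the one point requiring care, the rest being formal operad bookkeeping.
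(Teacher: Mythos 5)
Your proposal is correct and follows essentially the same route as the paper, whose entire proof of this statement is the one line ``This follows from Theorem 3.1,'' i.e.\ the homotopy-commutativity of the morphism-over-$C_2$ diagram applied with $p=k+1$ and all $k_i=1$ (note you cite this as Theorem 3.3, but the content you actually use is that of Theorem 3.1). You supply considerably more detail than the paper — the operadic associativity bookkeeping identifying both maps as $(k{+}1)$-fold gluings, and the observation that descent to the $\Sigma_k$-quotient requires the comparison homotopy to be taken equivariantly — the latter being a genuine point of care that the paper passes over in silence.
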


\begin{proof}
This follows from Theorem 3.1.
\end{proof}

Having now shown that $i \circ \gamma^F$ is homotopic to $\gamma^R \circ g$, we will be able to conclude our central theorem.

\begin{theorem} For $J$ a compatible almost complex structure, $i:Hol_{k+1}^*(\s,(\p,J)) \m \Omega^2_{k+1} \p$ induces a surjection on homology groups for dimensions less than $3k$.
\end{theorem}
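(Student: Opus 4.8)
The plan is to deduce this final statement directly from the chain of homotopies set up in Theorems 5.3, 5.4, and 5.5, together with the homology surjectivity of $\gamma^R$ recorded in Corollary 3.4. The key structural observation is that the analytic gluing map $\gamma^F$ factors the inclusion $i$ at the level of homology: since $\gamma^F$ takes values in $Hol_{k+1}^*(\s,(\p,J))$, we have $i\circ\gamma^F = i\circ\gamma^F$ with the image of $(i\circ\gamma^F)_*$ automatically contained in the image of $i_*$. So it suffices to show that $(i\circ\gamma^F)_*$ hits all of $H_j(\Omega^2_{k+1}\p)$ in the desired range, and then read off surjectivity of $i_*$.

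First I would assemble the homotopies into a single statement. By Theorem 5.3, $i\circ\gamma^F \simeq \gamma^A$ as maps $K/\Sigma_k \m \Omega^2_{k+1}\p$. By Theorem 5.4, $\gamma^A \simeq \gamma^L\circ g$, where $g:K/\Sigma_k \m C_2(k)\times_{\Sigma_k}(Hol_1^*(\s,(\p,J_0)))^k$ is a homotopy equivalence. By Theorem 5.5, $\gamma^L \simeq \gamma^R$. Concatenating these three homotopies yields $i\circ\gamma^F \simeq \gamma^R\circ g$.

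Next I would pass to homology. Because $g$ is a homotopy equivalence, $g_*$ is an isomorphism, so the image of $(\gamma^R\circ g)_* = \gamma^R_*\circ g_*$ coincides with the image of $\gamma^R_*$. By Corollary 3.4 the map $\gamma^R_*$ is surjective onto $H_j(\Omega^2_{k+1}\p)$ for $j\leq 3k$, hence in particular for $j<3k$. Since homotopic maps agree on homology, $(i\circ\gamma^F)_* = (\gamma^R\circ g)_*$, so $(i\circ\gamma^F)_*$ is surjective in this range. Finally, factoring $(i\circ\gamma^F)_* = i_*\circ(\gamma^F)_*$ shows that the image of $i_*$ contains the image of the composite, which is all of $H_j(\Omega^2_{k+1}\p)$ for $j<3k$. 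Therefore $i_*$ is a surjection in dimensions less than $3k$, as claimed.

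I do not expect a genuine obstacle at this last step, since all of the analytic difficulty has already been absorbed into earlier results: the surjectivity of the operator $L_s$ in Theorem 4.2, the implicit-function-theorem correction producing $\gamma^F$ in Theorem 4.3, and the three homotopies of Theorems 5.3--5.5. The only things to watch are bookkeeping points, namely confirming that $g$ is a genuine homotopy equivalence (so that $g_*$ is an isomorphism and not merely a surjection, which is what lets us transfer the surjectivity of $\gamma^R_*$ without loss) and keeping the range $j<3k$ consistent with the slightly larger range $j\leq 3k$ of Corollary 3.4.
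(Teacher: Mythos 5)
Your proposal is correct and follows essentially the same route as the paper: the paper's proof likewise concatenates the homotopies of Theorems 5.3--5.5 into the homotopy-commuting square $i\circ\gamma^F \simeq \gamma^R\circ g$, invokes that $g$ is a homotopy equivalence and that $\gamma^R$ is a homology surjection in dimensions $\leq 3k$ (Corollary 3.4), and concludes surjectivity of $i_*$. Your added remark about the image of $(i\circ\gamma^F)_*$ being contained in that of $i_*$ just makes explicit what the paper leaves implicit.
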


\begin{proof} 

Since $i \circ \gamma^F$ is homotopic to $\gamma^R \circ g$, the following diagram homotopy commutes:

$$
\begin{array}{ccccccccl}
K/\Sigma_k & \overset{\gamma^F}{\longrightarrow}  & Hol_{k+1}(\s,(\p,J))           \\
g \downarrow  &    &i \downarrow             \\
C_2(k) \times_{\Sigma_k} (Hol_{1}^*(\s,(\p,J_0)))^k& \overset{\gamma^R}{\longrightarrow} &\Omega^2_{k+1} \p
\end{array}$$
The map $g$ is a homotopy equivalence and $\gamma^R$ is surjective on $H_j$ for $j\leq 3k$. Thus the map $i:Hol_{k+1}^*(\s,(\p,J)) \m \Omega^2_{k+1} \p$  is surjective on $H_j$ for $j\leq 3k$ for any $J$ compatible with the standard symplectic form on $\p$. Re-indexing gives the version of this theorem stated in the introduction. 
\end{proof}

\paragraph{Remark} The fact that all four gluing maps are homotopic also has implications for the image in homology of $i:Hol_{k+1}^*(\s,(\p,J)) \m \Omega^2_{k+1} \p$ above the range $3k$. The map $\gamma:  C_2(k) \times_{\Sigma_k} (Hol_{1}^*(\s,(\p,J_0)))^k \m Hol_k^*(\s, (\p,J_0))$ defined in section 3 is a homology surjection in homological degrees. Hence the image in homology of $i:Hol_{k+1}^*(\s,(\p,J)) \m \Omega^2_{k+1} \p$ contains the image in homology of $l \circ i:Hol^*_k(\s,(\p,J_0)) \m \Omega^2_{k+1} \p$. Here $l:\Omega^2_k \p \m \Omega^2_{k+1} \p$ is the homotopy equivalence defined in Definition 5.2. Since $l$ and $i$ are injective on homology, and $Hol^*_{k}(\s,(\p,J_0)$ is a stable summand of $\Omega^2_{k+1}\p$ [CCMM], we see that the for any compatible almost complex structure $J$, the homology of $Hol_{k+1}^*(\s,(\p,J))$ surjects onto the homology of $Hol^*_k(\s,(\p,J_0))$ with $J_0$ the standard almost complex structure. See [CCMM] for more information about the homology of $Hol^*_k(\s,(\p,J_0))$.

\bibliographystyle{abbrv}
\bibliography{main}

\paragraph{References}

$$ $$

[A] Abreu M.: Topology of symplectomorphism groups of $S^2  \times S ^2$. Inv. Math. 131, 1-23 (1998).

[BM]  Boyer, C. P.,  Mann, B. M.: Monopoles, non-linear $\sigma$ models, and two-fold loop spaces. Comm. Math. Phys. 115, 571-594  (1988).

[CCMM] Cohen F.R., Cohen R.L., Mann, B.M., Milgram, R.J.: The topology of rational functions and divisors of surfaces. Acta Math. 166(3), 163-221 (1991).

[CCMM2] Cohen F.R., Cohen R.L., Mann, B.M., Milgram, R.J.: The homotopy type of rational functions. Mathematische Zeitschrift. 207, 213,37-47 (1993).

[G] Gromov M.: Pseudo-holomorphic curves in symplectic manifolds. Invent. Math. 82, 307-347 (1985).

[HLS] Hofer H., Lizan V., Sikorav J.-C.: On genericity for holomorphic curves in 4-dimensional almost-complex manifolds. J. Geom. Anal. 7, 149-159 (1998).

[HM] Harris J., Morrison  I.: Moduli of curves. Springer 1998.

[M] May J. P.: The Geometry of Iterated Loop Spaces. Lecture Notes in Mathematics, 271. Springer-Verlag, 1972.

[Mc] McDuff D.: The local behaviour of holomorphic curves in almost complex 4-manifolds. Journal of Differential Geometry 34, 311-358 (1991).

[MS] McDuff D., Salamon D.: J-holomorphic curves and quantum cohomology. Univ. Lect. Series 6, Amer. Math. Soc.,1994.

[Se] Segal G.B.: The topology of spaces of rational functions. Acta Math., 143, 39-72 (1979).

[S] Sikorav J.-C.: The gluing construction for normally generic J-holomorphic curves. ENS Lyon, March (2000).

\end{document}